\theoremstyle{plain}
\newtheorem{theorem}{Theorem}[section]
\newtheorem{proposition}[theorem]{Proposition}
\newtheorem{claim}{Claim}[theorem]
\newtheorem{corollary}[theorem]{Corollary}
\newtheorem{lemma}[theorem]{Lemma}
\theoremstyle{definition}
\newtheorem{problem}[theorem]{Problem}
\newtheorem{conjecture}[theorem]{Conjecture}
\newtheorem{question}[theorem]{Question}
\newenvironment{subproof}{\par\noindent {\it Proof of claim}.\ }{\hfill$\lozenge$\par\vspace{11pt}}
\newcommand{\jbj}[1]{{#1}}
\newcommand{\FH}[1]{{#1}}
\begin{document}

\title{\FH{Low chromatic spanning sub(di)graphs with prescribed degree or connectivity properties}\thanks{Research supported by the Independent
    Research Fund Denmark under grant number DFF 7014-00037B and by Agence Nationale de la Recherche under research grant ANR DIGRAPHS ANR-19-CE48-0013-01.}}
\author{J. Bang-Jensen\thanks{Department of Mathematics and Computer
    Science, University of Southern Denmark, Odense, Denmark (email:
    jbj@imada.sdu.dk). Part of this work was done while the author was visiting INRIA Sophia Antipolis. Hospitality and financial support is gratefully acknowledged. Ce travail a b\'en\'efici\'e d'une aide du gouvernement fran\c{c}ais, 
g\'er\'ee par l'Agence Nationale de la Recherche au titre du projet Investissements d'€™Avenir UCAJEDI portant la r\'ef\'erence no ANR-15-IDEX-01.}\and F. Havet\thanks{CNRS, Universit\'e
    C\^ote d'Azur, I3S and INRIA, Sophia Antipolis, France (email:
    frederic.havet@inria.fr).}\and M. Kriesell\thanks{Department of Mathematics, Technische Universit\"at Ilmenau, Germany (email: matthias.kriesell@tu-ilmenau.de)}\and
A. Yeo\thanks{Department of
    Mathematics and Computer Science, University of Southern Denmark,
    Odense, Denmark and  Department of Mathematics, University of Johannesburg, Auckland Park, 2006 South Africa (email: yeo@imada.sdu.dk).}}

\maketitle

\begin{abstract}
  
Generalizing well-known results of Erd\H{o}s and Lov\'asz, we show that every graph $G$ contains a spanning $k$-partite subgraph $H$ with $\lambda{}(H)\geq \lceil{}\frac{k-1}{k}\lambda{}(G)\rceil$, where $\lambda{}(G)$ is the  edge-connectivity of $G$. In particular, together with a well-known result due to Nash-Williams and Tutte, this  implies that
every $7$-edge-connected graphs contains a spanning bipartite graph whose edge set decomposes into two edge-disjoint spanning trees. We show that this is best possible as it does not hold for infintely many $6$-edge-connected graphs.

  For directed graphs, it was shown in \cite{bangJGT92} that there is no $k$ such that every $k$-arc-connected digraph has a spanning strong bipartite subdigraph. We prove that every strong digraph has a spanning strong 3-partite  subdigraph and that every strong semicomplete digraph on at least 6 vertices contains a spanning strong bipartite subdigraph.
\jbj{We generalize this result to higher connectivities by proving}  that, for every positive integer $k$, every $k$-arc-connected digraph contains a spanning $(2k+1$)-partite subdigraph which is $k$-arc-connected and this is best possible.  

A conjecture in \cite{kreutzerEJC24}  implies that every digraph of minimum out-degree $2k-1$ contains a spanning $3$-partite subdigraph with minimum out-degree at least $k$. We prove that the bound $2k-1$ would be best possible by providing an infinite class of digraphs with minimum out-degree $2k-2$ which do not contain any spanning $3$-partite subdigraph in which all out-degrees are at least $k$.
 We also prove that every digraph of minimum semi-degree at least $3r$ contains a spanning $6$-partite subdigraph in which every vertex has in- and out-degree at least $r$.
 
 \smallskip
 
   \noindent{}{\bf Keywords:} Edge-disjoint spanning trees; edge-connectivity; arc-connectivity; strong connectivity; bipartite graph; majority colouring; semicomplete digraph.
  
\end{abstract}

\section{Introduction}
One of the things that many courses on graph theory contain is the following fact, first observed by  Erd\H{o}s~\cite{erdosIJM3}.

\begin{proposition}[Erd\H{o}s~\cite{erdosIJM3}]
\label{prop:bipSp}
Every graph $G=(V,E)$ has a spanning bipartite subgraph $H$ such that $d_H(v)\geq \frac{1}{2}d_G(v)$ for every $v\in V$.
\end{proposition}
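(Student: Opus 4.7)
The plan is to prove Proposition~\ref{prop:bipSp} via a simple local-search (max-cut) argument. I would start by considering a bipartition $(A,B)$ of $V(G)$ that maximizes the number of edges with one end in $A$ and one end in $B$, and let $H$ be the spanning bipartite subgraph whose edge set consists exactly of these crossing edges.

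Next, I would fix an arbitrary vertex $v$, say with $v\in A$, and compare the current cut to the cut obtained by moving $v$ from $A$ to $B$. Moving $v$ turns the edges from $v$ to $N_G(v)\cap (A\setminus\{v\})$ into crossing edges (a gain of $|N_G(v)\cap A|$) and destroys the crossing edges from $v$ to $N_G(v)\cap B$ (a loss of $|N_G(v)\cap B|=d_H(v)$). By maximality of the original cut, the net change is nonpositive, i.e.\ $d_G(v)-d_H(v)=|N_G(v)\cap A|\leq |N_G(v)\cap B|=d_H(v)$, which rearranges to $d_H(v)\geq \tfrac{1}{2}d_G(v)$. The same argument applies symmetrically for $v\in B$, so the bound holds for every vertex.

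There is essentially no obstacle here: the only subtlety is making sure the swap accounting is correct (no double counting on loops, which don't appear in simple graphs, and careful handling of the edge $vv$-contribution which is vacuous). An alternative route would be probabilistic — put each vertex independently in $A$ or $B$ with probability $\tfrac{1}{2}$ to get $\mathbb{E}[d_H(v)]=\tfrac{1}{2}d_G(v)$ — but this only yields the bound in expectation and one still has to derandomize via conditional expectations, which reduces to the very same local swap. I would therefore present the max-cut proof as the cleanest and shortest route.
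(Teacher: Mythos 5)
Your proof is correct and follows essentially the same route as the paper, which likewise observes that a maximum spanning bipartite subgraph works (via exactly this vertex-swap argument) and notes the local-search variant for constructiveness. The swap accounting $d_G(v)-d_H(v)\leq d_H(v)$ is exactly the intended justification, so nothing further is needed.
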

 It is easy to show that a spanning bipartite  subgraph with the maximum number of edges \jbj{has the desired property}. 
Finding such a graph is the {\sc Max-Cut} problem, which is well-known to be NP-complete~\cite[problem GT25]{garey1979}.  However, it is easy to construct a spanning bipartite subgraph $H$ such that $d_H(v)\geq \frac{1}{2}d_G(v)$ for every $v\in V$: start from an arbitrary spanning bipartite subgraph and then successively move a vertex to the other side if this increases the number of edges in the resulting spanning bipartite subgraph. When no more vertices can be moved, we have the desired bipartite subgraph $H$. 

It is also well-known that Proposition \ref{prop:bipSp}, as well as its constructive proof,
can be easily generalized to $k$-partite  subgraphs\jbj{, yielding the following observation due to Lov\'asz.}
\begin{proposition}[Lov\'asz~\cite{lovaszSSMH1}]
\label{prop:kpart}
For every integer $2\leq k\leq n$, every graph $G=(V,E)$ on $n$ vertices  contains a spanning $k$-partite graph $H$ satisfying $d_H(v)\geq \lceil{}\frac{k-1}{k}d_G(v)\rceil$ for every $v\in V$. In particular, $G$ has a spanning $k$-partite subgraph with at least $\lceil{}\frac{k-1}{k}|E|\rceil$ edges.
\end{proposition}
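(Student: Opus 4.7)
The plan is to extend the local-improvement argument sketched for Proposition~\ref{prop:bipSp} from two parts to $k$. I would start with an arbitrary partition $V=V_1\cup\cdots\cup V_k$ into $k$ (possibly empty) classes and let $H$ be the associated spanning $k$-partite subgraph, whose edges are those of $G$ with endpoints in distinct classes. Among all such partitions, pick one maximizing $|E(H)|$; this maximum is attained since there are only finitely many partitions. Equivalently, run the swap procedure "move a vertex to another class whenever this strictly increases $|E(H)|$", which must terminate because $|E(H)|$ is bounded above by $|E|$.

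The key step is the local optimality condition at the maximum. For $v\in V$ and $1\le j\le k$, write $d_j(v)$ for the number of neighbours of $v$ in $V_j$, so $\sum_{j=1}^k d_j(v)=d_G(v)$. If $v\in V_i$, then reassigning $v$ from $V_i$ to $V_j$ changes $|E(H)|$ by $d_i(v)-d_j(v)$, so maximality forces $d_i(v)\le d_j(v)$ for every $j\neq i$. Summing these $k-1$ inequalities gives $(k-1)d_i(v)\le d_G(v)-d_i(v)$, i.e.\ $k\cdot d_i(v)\le d_G(v)$, and since $d_i(v)$ is an integer this promotes to $d_i(v)\le\lfloor d_G(v)/k\rfloor$. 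Hence
\[
d_H(v)=d_G(v)-d_i(v)\ge d_G(v)-\left\lfloor\frac{d_G(v)}{k}\right\rfloor=\left\lceil\frac{k-1}{k}\,d_G(v)\right\rceil,
\]
which is the claimed per-vertex bound. Summing over $V$, dividing by $2$, and using the integrality of $|E(H)|$ then yields $|E(H)|\ge \lceil\tfrac{k-1}{k}|E|\rceil$, giving the "in particular" clause.

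There is essentially no obstacle: the proof is a one-step extremal/exchange argument, structurally identical to the $k=2$ case in the proof of Proposition~\ref{prop:bipSp}. The only minor subtlety is the book-keeping of ceilings and floors, handled cleanly by the identity $d_G(v)-\lfloor d_G(v)/k\rfloor=\lceil(k-1)d_G(v)/k\rceil$ (verified by writing $d_G(v)=qk+r$ with $0\le r<k$).
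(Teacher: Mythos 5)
Your proof is correct and is essentially the argument the paper has in mind: it states that the local-improvement proof of Proposition~\ref{prop:bipSp} (take a maximum $k$-partition, or move vertices between classes while this gains edges) generalizes directly, and your exchange argument with the bound $d_i(v)\le\lfloor d_G(v)/k\rfloor$ is precisely that generalization. The ceiling/floor bookkeeping and the derivation of the ``in particular'' clause via summing degrees and integrality are handled correctly, so nothing needs to be added.
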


The bound in Proposition \ref{prop:kpart} can be improved and a 
 number of papers have dealt with estimating the maximum number of edges in a spanning $k$-partite subgraph, see e.g. \cite{andersenAC16,hofmeisterRSA9}.

\medskip

It is perhaps less known that Proposition \ref{prop:bipSp} can be strengthened to the result that every graph $G$ contains a spanning bipartite subgraph whose edge-connectivity is at least half of that of $G$. More generally, Proposition~\ref{prop:kpart}
can be strengthened into the following theorem. 
\begin{theorem}
\label{thm:maxcut}
Let $G$ be a graph and  $2\leq k\leq |V(G)|$ be an integer.
There is a spanning $k$-partite subgraph $H$ of  $G$ such that $\lambda(H)\geq \lceil{}\frac{k-1}{k}\lambda(G)\rceil$.
\end{theorem}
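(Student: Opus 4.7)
The plan is to mimic the local-search proof of Lov\'asz's Proposition~\ref{prop:kpart}, but with a cyclic color shift on a putative small cut replacing the usual single-vertex moves. Let $c \colon V(G) \to \{1, \dots, k\}$ be a $k$-coloring that maximizes $|E(H)|$, where $H$ is the spanning subgraph of $G$ consisting of all edges whose endpoints receive distinct colors. I claim this $H$ satisfies $\lambda(H) \ge \lceil \frac{k-1}{k} \lambda(G) \rceil$.

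Suppose otherwise, and fix $\emptyset \ne S \subsetneq V(G)$ with $d_H(S) < \lceil \frac{k-1}{k} \lambda(G) \rceil$. For each $t \in \{0, 1, \dots, k-1\}$, define the coloring $c_t$ by $c_t(v) \equiv c(v) + t \pmod{k}$ if $v \in S$ and $c_t(v) = c(v)$ otherwise; let $H_t$ be the associated spanning $k$-partite subgraph, so $H_0 = H$. The key computation is to sum $|E(H_t)|$ over $t$. Every edge of $G$ with both endpoints in $S$ or both in $\bar{S}$ is unaffected by the shift, hence lies in every $H_t$ exactly when it lies in $H$. An edge $uv$ with $u \in S$ and $v \in \bar{S}$, on the other hand, belongs to $H_t$ precisely when $t \not\equiv c(v) - c(u) \pmod{k}$, i.e.\ for exactly $k-1$ values of $t$. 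Writing $e_H(S), e_H(\bar{S})$ for the $H$-edges within $S, \bar{S}$ respectively, we get
\[
\sum_{t=0}^{k-1} |E(H_t)| = k\bigl(e_H(S) + e_H(\bar{S})\bigr) + (k-1)\, d_G(S),
\]
and subtracting $k|E(H)| = k\bigl(e_H(S) + e_H(\bar{S}) + d_H(S)\bigr)$ yields
\[
\sum_{t=0}^{k-1} |E(H_t)| - k|E(H)| = (k-1)\, d_G(S) - k\, d_H(S).
\]

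To finish, note that $d_H(S)$ is an integer strictly below $\lceil \frac{k-1}{k}\lambda(G) \rceil$, so $d_H(S) < \frac{k-1}{k} \lambda(G) \le \frac{k-1}{k} d_G(S)$, which makes the right-hand side strictly positive. Averaging therefore produces some $t$ with $|E(H_t)| > |E(H)|$, contradicting the maximality of $c$. The only real obstacle is spotting the correct local move: single-vertex swaps of Lov\'asz's type control only the degree of a single vertex, while a simultaneous cyclic shift over the entire set $S$ is exactly what converts the averaging identity into a cut-size bound. Once the shift is identified, the calculation and the strictness of the final inequality are routine.
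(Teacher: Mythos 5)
Your proof is correct, and it is essentially the same as the paper's: both proceed from a maximum $k$-partition and analyze the effect of a cyclic color shift on one side of a putative small cut. The paper phrases the move as replacing the partition $V_i = X_i \cup Y_i$ by $X_i \cup Y_{\pi_\ell(i)}$ and directly selects the permutation $\pi_\ell$ minimizing the "bad" cross-edges, whereas you sum over all $k$ shifts and average; these are two equivalent ways of extracting the same conclusion.
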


\noindent We have not been able to find this theorem in the literature, although it may already be known. We include its proof in Section~\ref{sec:Ecsec}. 
We also show how such a subgraph $H$ can be constructed in polynomial time.

\medskip

A {\bf 2T-graph} is a graph whose edge set decomposes into two edge-disjoint spanning trees.
The following theorem, due to Nash-Williams and Tutte, shows that highly edge-connected graphs have many edge-disjoint trees, in particular, every $4$-edge-connected graph contains a spanning 2T-subgraph.

\begin{theorem}[Nash-Williams~\cite{nashwilliamsJLMS36}, Tutte~\cite{tutteJLMS36}]
\label{thm:2k}
Every $2k$-edge-connected graph has $k$ edge-disjoint spanning trees.
\end{theorem}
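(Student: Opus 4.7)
The plan is to derive Theorem~\ref{thm:2k} from the (stronger) partition characterization of Nash-Williams and Tutte, which states that a graph $G$ admits $k$ pairwise edge-disjoint spanning trees if and only if for every partition $\mathcal{P}=\{V_1,\ldots,V_r\}$ of $V(G)$ the number $e_G(\mathcal{P})$ of edges joining distinct parts satisfies $e_G(\mathcal{P})\geq k(r-1)$. Given this characterization, the reduction is a one-line averaging argument: if $G$ is $2k$-edge-connected and $\mathcal{P}$ has $r\geq 2$ parts, then each $V_i$ is a proper non-empty vertex subset, so the cut $(V_i,V(G)\setminus V_i)$ contains at least $2k$ edges; summing over $i=1,\ldots,r$ and dividing by $2$ (each crossing edge being counted twice) gives $e_G(\mathcal{P})\geq kr\geq k(r-1)$, and the characterization then produces the desired $k$ edge-disjoint spanning trees.

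To prove the partition characterization itself I would invoke Edmonds' matroid union theorem applied to $k$ copies of the graphic matroid $M(G)$: the maximum number of edge-disjoint spanning trees equals the largest $k$ for which the rank of this union equals $k(|V(G)|-1)$, and the union formula gives this rank as
\[
\min_{F\subseteq E(G)}\bigl(|E(G)\setminus F|+k\cdot r_{M(G)}(F)\bigr).
\]
A minimising $F$ may be taken to be the union of the edge sets $E(G[V_i])$ induced by the connected components $V_1,\ldots,V_r$ of $(V(G),F)$, since enlarging $F$ to include all within-component edges leaves the number of components (hence $r_{M(G)}(F)$) unchanged while only decreasing $|E(G)\setminus F|$. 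For such an $F$ one has $|E(G)\setminus F|=e_G(\mathcal{P})$ and $r_{M(G)}(F)=|V(G)|-r$, so the inequality $|E(G)\setminus F|+k\cdot r_{M(G)}(F)\geq k(|V(G)|-1)$ rearranges exactly to $e_G(\mathcal{P})\geq k(r-1)$.

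The main obstacle is the matroid-union step and its translation to partitions; once the characterization is in hand the $2k$-edge-connected corollary falls out of the trivial cut-averaging argument above. Note that this averaging wastes a factor of $r/(r-1)$, and the waste is largest when $r=2$; so the tightness of the constant $2k$ is witnessed by graphs whose only small cuts are bipartition-type cuts of size $2k-1$, explaining why $2k$ cannot be reduced in general.
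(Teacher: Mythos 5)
The paper states Theorem~\ref{thm:2k} as a classical result of Nash-Williams and Tutte and cites it without giving a proof, so there is no internal argument to compare your attempt against.

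Your derivation is correct and is essentially the standard route. The reduction from $2k$-edge-connectivity to the Nash-Williams/Tutte partition criterion is exactly the usual double-counting: for a partition $\mathcal{P}=\{V_1,\ldots,V_r\}$ with $r\geq 2$, each $V_i$ is a non-empty proper subset, so summing $d_G(V_i)\geq 2k$ over $i$ and noting that each cross edge is counted twice gives $e_G(\mathcal{P})\geq kr\geq k(r-1)$. Your sketch of the partition criterion itself via the matroid union theorem applied to $k$ copies of $M(G)$ is also sound: the key observation that a minimising $F$ in the rank formula may be taken to be closed (all within-component edges included, so $r_{M(G)}(F)=|V(G)|-r$ and $|E(G)\setminus F|=e_G(\mathcal{P})$) is exactly what converts the min-over-subsets formula into the partition inequality. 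One small caveat on your closing remark: the factor-of-two loss in the averaging does not by itself prove tightness of the constant $2k$; in fact for $k=1$ the theorem is slack (connectivity already suffices for one spanning tree), and exhibiting $(2k-1)$-edge-connected graphs without $k$ edge-disjoint spanning trees for $k\geq 2$ requires a separate construction rather than following from the averaging being lossy.
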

The chromatic number of a 2T-graph is at most $4$ because it is 3-degenerate.
Observe that Theorem \ref{thm:maxcut} and Theorem \ref{thm:2k} imply the existence of  spanning 2T-subgraphs with \jbj{chromatic number smaller than 4} in graphs with high edge-connectivity.  

\begin{corollary}
\label{cor:5ec2T3part}
Let $G$ be a graph.
\begin{itemize}
\item[(a)] If $\lambda(G)\geq 5$, then $G$ contains a spanning $3$-partite  2T-subgraph.
\item[(b)]  If $\lambda(G)\geq 7$, then $G$ contains a spanning bipartite 2T-subgraph.
\end{itemize}
\end{corollary}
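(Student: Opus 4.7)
The plan is to combine Theorem~\ref{thm:maxcut} with Theorem~\ref{thm:2k} in the most direct way possible; in each case we first reduce the chromatic number by passing to a low-chromatic spanning subgraph with controlled edge-connectivity, and then extract two edge-disjoint spanning trees.

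For part (a), I would apply Theorem~\ref{thm:maxcut} with $k=3$ to $G$. Since $\lambda(G)\geq 5$, this produces a spanning $3$-partite subgraph $H$ of $G$ with $\lambda(H)\geq \lceil \frac{2}{3}\cdot 5\rceil = \lceil 10/3\rceil = 4$. Thus $H$ is $4$-edge-connected, and Theorem~\ref{thm:2k} with $k=2$ guarantees that $H$ contains two edge-disjoint spanning trees $T_1,T_2$. Their union $T_1\cup T_2$ is a spanning $2$T-subgraph of $H$, hence of $G$, and it inherits the $3$-partiteness from $H$.

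For part (b), I would instead invoke Theorem~\ref{thm:maxcut} with $k=2$. Since $\lambda(G)\geq 7$, the resulting spanning bipartite subgraph $H$ satisfies $\lambda(H)\geq \lceil \frac{1}{2}\cdot 7\rceil = 4$. Again Theorem~\ref{thm:2k} supplies two edge-disjoint spanning trees in $H$, and their union is a spanning bipartite $2$T-subgraph of $G$.

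There is essentially no obstacle here: both statements follow by a one-line arithmetic check on the edge-connectivity bound of Theorem~\ref{thm:maxcut}, followed by a direct application of Nash-Williams/Tutte. The only point worth noting is that the rounding in $\lceil \frac{k-1}{k}\lambda(G)\rceil$ is what makes the thresholds $5$ and $7$ sharp for this argument (e.g.\ $\lambda(G)=4$ would give only $\lambda(H)\geq 3$ in the $3$-partite case, which is not sufficient to invoke Theorem~\ref{thm:2k} with $k=2$).
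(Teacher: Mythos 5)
Your argument is correct and is exactly the derivation the paper intends: apply Theorem~\ref{thm:maxcut} with $k=3$ (resp.\ $k=2$) to get a spanning $3$-partite (resp.\ bipartite) subgraph $H$ with $\lambda(H)\geq 4$, then extract two edge-disjoint spanning trees via Theorem~\ref{thm:2k}. The arithmetic with the ceilings and the observation that the union of the two trees inherits the partition from $H$ match the paper's (implicit) proof, so there is nothing to add.
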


In Section~\ref{sec:bip2Tsec}, we show that the edge-connectivity condition $\lambda(G)\geq 7$ in Corollary~\ref{cor:5ec2T3part}~(b) is best possible by \jbj{constructing} infinitely many 6-edge-connected graphs with no spanning bipartite 2T-subgraph.
Whether the edge-connectivity condition $\lambda(G)\geq 5$ in Corollary~\ref{cor:5ec2T3part}~(a) is best possible is still open.

\medskip

Remarkably, the situation is completely different for digraphs. There is no directed analogue to Proposition \ref{prop:bipSp}.
Thomassen \cite{thomassenEJC6} proved that, for every positive integer $k$, there is a $k$-out-regular digraph $H_k$
with no  even directed  cycle, and thus with no spanning bipartite subdigraph with minimum out-degree at least $1$.
Furthermore, Bang-Jensen et al.~\cite{bangJGT92}  showed that no degree of arc-connectivity guarantees the existence of a spanning strong bipartite subdigraph.
\begin{theorem}[Bang-Jensen et al.~\cite{bangJGT92}]
\label{thm:nobipstrong}
For every positive integer $k$, there exists a $k$-arc-connected digraph $D$ which has no spanning bipartite subdigraph with minimum semi-degree at least 1. In particular, $D$ has no spanning strong bipartite subdigraph.
\end{theorem}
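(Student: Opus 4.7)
The plan is to construct, for every positive integer $k$, a $k$-arc-strong multidigraph $D_k$ in which every directed cycle has odd length. Given such a $D_k$, the theorem is immediate: if $H$ is a spanning bipartite subdigraph of $D_k$ with minimum out-degree at least $1$, then starting at any vertex and iteratively following an out-arc inside $H$ produces an infinite walk which must revisit a vertex, yielding a directed cycle of $H$. Because $H$ is bipartite, this cycle has even length, contradicting the fact that every directed cycle of $H \subseteq D_k$ is odd. This rules out spanning bipartite subdigraphs with minimum semi-degree at least $1$, and the ``in particular'' clause about spanning strong bipartite subdigraphs is automatic since a strong digraph has minimum semi-degree at least $1$.

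For the construction, fix any odd integer $n \geq 3$, let $V(D_k) = \{v_0, v_1, \ldots, v_{n-1}\}$, and place exactly $k$ parallel arcs from $v_i$ to $v_{i+1 \bmod n}$ for each $i$. Equivalently, $D_k$ is the directed cycle $\vec{C}_n$ with each arc multiplied $k$ times, so every vertex has in-degree and out-degree exactly $k$.

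To check that $D_k$ is $k$-arc-strong, let $S$ be any nonempty proper subset of $V(D_k)$. Walking around the underlying cycle, at least one consecutive pair $(v_i, v_{i+1 \bmod n})$ satisfies $v_i \in S$ and $v_{i+1 \bmod n} \notin S$; the $k$ parallel arcs between these two vertices give $d^+_{D_k}(S) \geq k$, so every out-cut has size at least $k$. The odd-cycle property is equally direct: every arc of $D_k$ is a copy of some $v_i \to v_{i+1 \bmod n}$, so traversing an arc advances the first coordinate by exactly $1$ modulo $n$. A directed cycle of length $\ell$ therefore satisfies $\ell \equiv 0 \pmod n$, and $\ell \leq n$ since its vertices are distinct, forcing $\ell = n$, which is odd.

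The only foreseeable obstacle is the convention attached to the word \emph{digraph}: if one insists on loopless \emph{simple} digraphs, the parallel arcs have to be replaced by a gadget preserving both $k$-arc-strongness and the odd-cycle property, e.g.\ a layered blow-up of $\vec{C}_n$ into $n$ classes in which every directed cycle still visits each class exactly once. This is a purely cosmetic adjustment; at the level of generality of multidigraphs used in \cite{bangJGT92}, the construction above is already complete.
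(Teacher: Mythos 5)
Your reduction is correct as far as it goes: a digraph with no even directed cycle has no spanning bipartite subdigraph with minimum out-degree (hence minimum semi-degree) at least $1$, since such a subdigraph would contain a directed cycle, and every directed cycle of a bipartite digraph has even length. The multidigraph $k\cdot\vec{C}_n$ with $n$ odd is indeed $k$-arc-connected with all directed cycles of length $n$.

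The difficulty is that the statement concerns digraphs, and the convention in this paper (following \cite{bang2009}) is that digraphs have no parallel arcs; Section~\ref{sec:higharccon} opens with ``In this section, digraphs may have multiple arcs,'' explicitly marking that as the exception. The cited theorem is a strengthening of Thomassen's construction of $k$-out-regular \emph{simple} digraphs with no even cycle, and in the simple setting it is far from trivial. You flag the convention issue but dismiss it, and your proposed remedy does not work: in the layered blow-up of $\vec{C}_n$ into classes $V_0,\ldots,V_{n-1}$ with all arcs from $V_i$ to $V_{i+1\bmod n}$, it is false that every directed cycle visits each class once. If $|V_i|\geq 2$ and $a_i,b_i\in V_i$ are distinct, then $a_0a_1\cdots a_{n-1}b_0b_1\cdots b_{n-1}a_0$ is a directed cycle of length $2n$, which is even; and in fact this blow-up \emph{does} admit a spanning bipartite subdigraph of minimum semi-degree at least $1$ (split each $V_i$ into two nonempty parts, put all first parts on one side and all second parts on the other, and take the induced bipartite subdigraph). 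Passing from multidigraphs to simple digraphs is the entire content of the theorem, not a cosmetic adjustment, and the construction in \cite{bangJGT92} is substantially more involved than either gadget considered here.
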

Moreover, Bang-Jensen et al.~\cite{bangJGT92} proved that it is NP-complete to decide whether a digraph has a spanning strong bipartite subdigraph.


On the other hand, Alon \cite{alonCPC15} pointed out that every digraph has a 3-partition $(V_1,V_2,V_3)$ such that $\Delta^+(D\langle V_i\rangle)<\Delta^+(D)$ for $i\in [3]$, where $\Delta^+(D)$ is the maximum out-degree of $D$. 
Furthermore, it is easy to show, see Proposition \ref{prop:3col-sub}, that every strong digraph has a spanning strong 3-partite  subdigraph.  We also prove that every strong semicomplete digraph on at least 6 vertices contains a spanning strong bipartite subdigraph.

In Section~\ref{sec:higharccon}, we study low chromatic spanning subdigraphs of highly arc-connected digraphs. We show that every $k$-arc-connected digraph has a spanning $(2k+1)$-partite subdigraph which is also $k$-arc-connected and this is best possible as shown by the regular tournaments on $2k+1$ vertices. We also provide an infinite family of $k$-arc-connected digraphs for which every spanning $k$-arc-connected subdigraph has chromatic number at least $2k+1$.

The above-mentioned result of Alon does not say anything about neighbours of vertices with out-degree smaller than $\Delta^+(D)$. In \cite{kreutzerEJC24}, Kreutzer et al. defined a {\bf majority colouring} of a digraph as a vertex colouring $c$ such that at least half of the  out-neighbours of every vertex $v$ have a colour different from $c(v)$.
They proved that every digraph has a majority $4$-colouring and conjectured that every digraph has  a majority $3$-colouring. 

\begin{conjecture}[Kreutzer et al.~\cite{kreutzerEJC24}]
\label{conj:3majoritycol}
  Every digraph has a majority $3$-colouring.
\end{conjecture}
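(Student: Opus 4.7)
The statement is conjectured, not proven, so this is necessarily a sketch of an attempt rather than a complete plan.

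The first approach I would try is a local improvement scheme on an arbitrary 3-colouring $c\colon V(D)\to\{1,2,3\}$. Call a vertex $v$ \emph{bad} if strictly more than $d^+(v)/2$ of its out-neighbours share its colour. For a bad $v$, fewer than $d^+(v)/2$ of its out-neighbours are distributed among the two remaining colours, so by pigeonhole one of those two colours is used by fewer than $d^+(v)/4$ out-neighbours of $v$; recolouring $v$ with that colour strictly decreases the number of monochromatic out-arcs at $v$. I would then try to exhibit a global potential whose strict decrease at each such swap would guarantee termination at a majority colouring.

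The main obstacle is that recolouring $v$ also perturbs the counts at every in-neighbour $u$ of $v$: an arc $uv$ that was monochromatic becomes bichromatic, but simultaneously an arc $uv$ that was bichromatic may become monochromatic if $c(u)$ equals the new colour of $v$. In the worst case the latter effect creates new bad in-neighbours, and the obvious candidate potentials (total number of monochromatic arcs, total number of bad vertices) need not decrease. The proof of the 4-colour result of Kreutzer et al.\ escapes this trap thanks to one extra colour in the pigeonhole argument; at three colours the argument no longer closes, and I expect this is where any proof of the conjecture must find a genuinely new idea --- perhaps a weighted potential that penalises monochromatic out-arcs more heavily than monochromatic in-arcs, or a more global reoptimisation step that swaps pairs of colour classes on carefully chosen subsets.

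A complementary probabilistic plan handles vertices of large out-degree: under a uniformly random 3-colouring the probability that a vertex $v$ is bad is $\mathbb{P}[\mathrm{Bin}(d^+(v),1/3)>d^+(v)/2]$, which is exponentially small in $d^+(v)$ by Chernoff, and badness at $v$ depends only on the colours of $v$ and of $N^+(v)$; hence the Lov\'asz Local Lemma resolves the conjecture whenever the minimum out-degree is sufficiently large compared with the maximum total degree. Reducing the general case to the large-out-degree case is the step where I expect the proof to break down without substantial new ideas, since the subdigraph induced on the small-out-degree vertices can itself be an arbitrary digraph, bringing us full circle. Verifying the conjecture first for classes already relevant to the paper --- tournaments, semicomplete digraphs, strong digraphs, or oriented graphs with no short directed cycle --- would be the natural intermediate milestones before a full solution.
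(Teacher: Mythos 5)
This is an open conjecture due to Kreutzer, Oum, Seymour, van der Zypen and Wood; the paper does not prove it, and you are right to treat it as a conjecture rather than attempt a complete proof. There is therefore no ``paper's own proof'' to compare against. What the paper does contribute is an optimality result on the other side: Theorem~\ref{thm:2knotk+1} shows that the hypothesis $\delta^+(D)\ge 2k-1$ implicit in Conjecture~\ref{conj:3majoritycol} (via Conjecture~\ref{conj:2k-1col} with $r=2$) would be best possible, by constructing digraphs of minimum out-degree $2k-2$ with no spanning $3$-partite subdigraph of minimum out-degree $k$. Your survey of approaches is sound: the local-improvement argument is exactly the one that proves the majority $4$-colouring result (Proposition~\ref{prop4k} in the paper, where the two independent greedy passes supply the extra slack you correctly identify as missing at three colours), and the Chernoff--Lov\'asz-Local-Lemma route you describe is precisely the content of the partial results of Kreutzer et al.\ that the paper cites, namely majority $3$-colourability when $\delta^+(D)>72\log(3|V(D)|)$ or when $\delta^+(D)\ge\delta\ge 1200$ and $\Delta^-(D)\le\exp(\delta/72)/(12\delta)$. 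You also put your finger on the genuine obstruction: recolouring a vertex can create new bad in-neighbours, and no natural potential is known to decrease monotonically under such swaps, which is why the conjecture remains open.
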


To support this conjecture, they proved that it holds for a digraph $D$
such that $\delta^+(D)> 72\log{}(3|V(D)|)$, or such that $\delta^+(D)\geq \delta\geq 1200$ and
  $\Delta^-(D)\leq \frac{\exp(\delta/72)}{12\delta}$ for some $\delta$.

More generally, Kreutzer et al.~\cite{kreutzerEJC24} conjectured the following analogue of Proposition~\ref{prop:kpart}, whose particular case $r=2$ is Conjecture~\ref{conj:3majoritycol}.

\begin{conjecture}[Kreutzer et al.~\cite{kreutzerEJC24}]
  \label{conj:2k-1col}
  For every integer $r\geq 2$, every digraph $D$ contains a spanning $(2r-1)$-partite subdigraph $H$ such that every vertex $v$ satisfies $d^+_{H}(v)\geq \left\lceil\frac{r-1}{r}d^+_D(v)\right\rceil$.
  \end{conjecture}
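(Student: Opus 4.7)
The natural starting point is the probabilistic method. Take a uniformly random $(2r-1)$-partition $(V_1,\dots,V_{2r-1})$ of $V(D)$, let $c\colon V(D)\to[2r-1]$ be the corresponding colouring, and let $H$ be the spanning $(2r-1)$-partite subdigraph obtained by keeping the arcs of $D$ that join differently coloured vertices. Conditioning on $c(v)$, each out-neighbour of $v$ receives the colour $c(v)$ independently with probability $1/(2r-1)$, so
\[
\mathbb{E}\bigl[d^+_H(v)\bigr]=\frac{2r-2}{2r-1}\,d^+_D(v),
\]
which strictly exceeds the target $\frac{r-1}{r}\,d^+_D(v)$ by the factor $\frac{2r}{2r-1}>1$. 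This constant-factor slack is the starting point of every known partial result and of the plan below.

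For vertices of large out-degree, the bad event ``more than $\lfloor d^+_D(v)/r\rfloor$ out-neighbours of $v$ share its colour'' has probability exponentially small in $d^+_D(v)$ by a Chernoff bound, and depends only on the colours of the vertices in $\{v\}\cup N^+(v)$. The dependency graph on bad events then has degree polynomial in the maximum out-degree, and the Lov\'asz Local Lemma produces a valid colouring as soon as $\delta^+(D)$ is large enough; this is the route taken by Kreutzer et al.~\cite{kreutzerEJC24} for digraphs with $\delta^+(D)=\Omega(\log n)$. A first step in my plan is to sharpen this analysis, e.g.\ via a cluster LLL or an entropy-compression argument, so that the set of ``problem'' vertices (those of low out-degree) becomes small enough to be treated separately.

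The real difficulty lies with the low-out-degree vertices, where the bad event has constant probability and the LLL fails. A standard reflex is a deterministic repair phase: while some $v$ violates the bound, move $v$ to a colour class minimising its number of same-colour out-neighbours; by averaging, this class contains at most $\lfloor d^+_D(v)/(2r-1)\rfloor\leq\lfloor d^+_D(v)/r\rfloor$ out-neighbours of $v$, so the bound at $v$ itself is repaired. The crucial obstacle, absent from the undirected Lov\'asz proof of Proposition~\ref{prop:kpart}, is that such a move can destroy the bound at an in-neighbour of $v$, so the total number of arcs of $H$ is \emph{not} a monotone potential; inventing a potential function that strictly improves under repair steps is the heart of the problem. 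The particular case $r=2$ is the majority $3$-colouring conjecture (Conjecture~\ref{conj:3majoritycol}), a widely studied open problem, so a full proof will require a genuinely new idea; a plausible route is induction on $|V(D)|$, removing a low-out-degree vertex $v$, colouring $D-v$ by the inductive hypothesis, and re-inserting $v$ into a colour class compatible with the constraints both at $v$ and at each of its in-neighbours.
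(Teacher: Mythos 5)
This statement is not proved in the paper at all: it is Conjecture~\ref{conj:2k-1col}, quoted from Kreutzer et al.~\cite{kreutzerEJC24}, and its special case $r=2$ is the majority $3$-colouring conjecture (Conjecture~\ref{conj:3majoritycol}), which is open. The paper only records the known partial results around it (the majority $4$-colouring of Proposition~\ref{prop4k}, the Knox--S\'amal bound $\left\lceil\frac{k-2}{k}d^+_D(v)\right\rceil$, and the large-out-degree LLL-type results of \cite{kreutzerEJC24}) and proves in Theorem~\ref{thm:2knotk+1} that the implied bound would be best possible. So there is no ``paper proof'' to match your argument against, and your proposal, by its own admission, is a research plan rather than a proof.

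Concretely, the gaps in your plan are exactly the ones that keep the conjecture open. The Chernoff/Local Lemma phase only controls vertices whose out-degree is large (of order $\log$ of the relevant dependency degree), which is precisely the regime already handled in \cite{kreutzerEJC24}; sharpening constants there does not touch vertices of bounded out-degree, for which the bad event has probability bounded away from $0$ and the required bound $d^+_H(v)\geq \left\lceil\frac{r-1}{r}d^+_D(v)\right\rceil$ leaves no slack (e.g.\ for $r=2$ and $d^+_D(v)=1$ the unique out-neighbour must be coloured differently). Your repair phase fixes $v$ but, as you note, may break the constraint at in-neighbours of $v$, and no monotone potential is exhibited --- the undirected potential (number of crossing edges) genuinely fails here, and this is the known obstruction, not a routine technicality. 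Finally, the proposed induction (delete a low-out-degree vertex $v$, colour $D-v$, re-insert $v$) fails for the same reason: the colour of $v$ is constrained not only at $v$ but at every in-neighbour $u$ of $v$, since $v$'s colour may push $u$ below its ceiling; with only $2r-1$ colours and possibly many in-neighbours each forbidding a specific colour, no admissible colour need exist, and no mechanism is offered to recolour $D-v$ compatibly. In short, the proposal reproduces the known partial results and correctly identifies the difficulty, but it does not establish the statement, which remains a conjecture both in this paper and in the literature.
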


In support to this conjecture, Knox and S\'amal~\cite{knoxEJC25} proved that  for every integer $k\geq 2$, every digraph $D$ has a spanning $k$-partite subdigraph $H$ such that every vertex $v$ satisfies $d^+_{H}(v)\geq \left\lceil\frac{k-2}{k}d^+_D(v)\right\rceil$.

 Kreutzer et al.~\cite{kreutzerEJC24} gave an example of digraphs (the regular tournaments on $2r-1$ vertices) that show that the bound $2r-1$ would be best possible in Conjecture~\ref{conj:2k-1col}. 

The truth of Conjecture \ref{conj:3majoritycol}
 would imply that every digraph of minimum out-degree at least $2k-1$ would contain a spanning $3$-partite subdigraph with minimum out-degree at least $k$. 
In Section~\ref{sec:degrees},  we prove that this would be best possible as there exist infinitely many digraphs with minimum out-degree $2k-2$ having the property that every spanning $3$-partite subdigraph has a vertex of out-degree at most $k-1$. 

We also study low chromatic subdigraphs of high minimum semi-degree and prove that every digraph of minimum semi-degree at least $3r$ contains a spanning $6$-partite subdigraph in which every vertex has in- and out-degree at least $r$ (this follows from Theorem \ref{thm:KsemiDegrees}).
  
\medskip

Finally, in Section~\ref{sec:open}, we give some final remarks and present some open questions for further research.

\section{Terminology and preliminaries}
Notation and terminology not given here is consistent with \cite{bang2009}.

\medskip

Let $G=(V,E)$ be a graph.
For two sets $X,Y$ of vertices, we denote by $d_G(X,Y)$ the number of edges with an end-vertex in $X$ and the other in $Y$.
For a subset $X\subset V$, the {\bf degree} of $X$ in $G$ is $d_G(X) = d_G(X, V\setminus X)$. 
For a vertex $v$, we abbreviate  $d_G(\{v\})$ into  $d_G(v)$.

\medskip

Let $D=(V,A)$ be a digraph. 
The {\bf underlying graph} of a digraph $D$ is the undirected graph $UG(D)=(V,E)$ where $uv\in E$ if and only if in $D$ there is an arc between $u,v$ in any direction. \jbj{A vertex $w$ is an {\bf out-neighbour}
  ({\bf in-neighbour}) of the vertex $v$ if $vw$ ($wv$) is an arc of $D$. We denote  the set of out-neighbours of a vertex $v$ by $N^+(v)$ and the set of in-neighbours of $v$ by $N^-(v)$.}

For a subset $X\subset V$ we denote by $d_D^+(X)$
(resp. $d_D^-(X)$) the number of arcs with tail (resp. head) in $X$ and head
(resp. tail) in $V\setminus X$. We call $d_D^+(X)$ (resp. $d_D^-(X)$) the {\bf out-degree}
(resp. {\bf in-degree}) of the set $X$ in $D$.
For sake of clarity, for a vertex $v$, we abbreviate  $d_D^+(\{v\})$ (resp. $d_D^-(\{v\})$) into  $d_D^+(v)$ (resp. $d_D^-(v)$).
 We also drop the subscript when the digraph is clear from the
context. \jbj{The {\bf degree} of a vertex $v$ is $d(v)=d^+(v)+d^-(v)$.} For a vertex $v$ we let $d^0_D(v)=\min\{d^+_D(v),d^-_D(v)\}$ be the {\bf semi-degree} of $v$ and we denote by $\delta^0(D)$ the minimum over all in- and
out-degrees of vertices of $D$, that is $\delta^0(D)=\min_{v\in V}d^0_D(v)$. This is also called the {\bf minimum
  semi-degree} of $D$.

\medskip

Let $G$ be a (di)graph.
For a positive integer $k$, a {\bf $\mathbf{k}$-partition} of $G$ is a partition of $V(G)$ into $k$ disjoint sets $V_1,\ldots{},V_k$.
For a subset $X$ of vertices, we denote by $G\langle X\rangle$ the sub(di)graph of $G$ {\bf induced} by $X$, that is, the sub(di)graph whose vertex set is $X$ and whose edges (arcs) are the edges (arcs) with both end-vertices in $X$. 
A {\bf bipartition} is a $2$-partition.
An {\bf independent set} in $G$ is a set of vertices that induces a sub(di)graph with no edges (arcs). 
A \jbj{ {\bf $\mathbf{k}$-colouring} of $G$ is a $k$-partition ($V_1,\ldots{},V_k)$ of $V(G)$ into independent sets}.
$G$ is {\bf $\mathbf{k}$-partite} (or {\bf $\mathbf{k}$-colourable}) if it admits a $k$-colouring \jbj{(note that we allow one or more of the sets in a $k$-partition to be empty)}.
The {\bf chromatic number} of $G$, denoted by $\chi(G)$, is the least integer $k$ such that $G$ is $k$-colourable.
If $(V_1,\ldots{},V_k)$ is a partition of the vertex set of $G)$ into disjoint subsets, then we denote by $G[V_1,\ldots{},V_k]$ the spanning $k$-partite sub(di)graph whose edges (arcs) are precisely those edges (arcs) whose end-vertices belong to different sets in the partition $(V_1,\ldots{},V_k)$.
\jbj{By a  {\bf cut} we mean} a spanning bipartite sub(di)graph of the form $G[X,V\setminus X]$ for some non-empty proper subset $X$ of $V(G)$. 
A cut is {\bf trivial} is is of the form $G[\{v\},V\setminus \{v\}]$  (or equivalently $G[V\setminus \{v\}, \{v\}]$) for some vertex $v$.

$G$ is {\bf $\mathbf{k}$-degenerate}  if  every subgraph has a vertex of degree at most $k$. It is well-known and easy to show that every $k$-degenerate (di)graph is $(k+1)$-partite.

For any two distinct vertices of $G$, we denote  by  $\lambda{}(u,v)$ the maximum number of edge-disjoint (arc-disjoint) $(u,v)$-paths in $G$. By Menger's theorem (see e.g. \cite[Section 5.4 ]{bang2009}), $\lambda{}(u,v)$ is the minimum number of edges (arcs) we need to delete from $G$ to
destroy all paths from  $u$ to $v$. The {\bf edge-connectivity} ({\bf arc-connectivity}) of $G$,
denoted by $\lambda{}(G)$, is the minimum over $\lambda{}(u,v)$ over all pairs of distinct vertices $u,v$. By Menger's theorem again this is the same as the minimum degree (out-degree) of a non-empty proper subset
of $V(G)$. 
The (di)graph $G$ is  {\bf $\mathbf{k}$-edge-connected} ({\bf $\mathbf{k}$-arc connected}) if $\lambda(G) \geq k$.
A digraph $D$ is strongly connected, or {\bf strong}, if
$\lambda{}(D)\geq 1$. \jbj{The digraph $D=(V,A)$ is {\bf $\mathbf{k}$-strong} if it has at least $k+1$ vertices and is $D-X$ is strong for every set $X\subset V$ with $|X|<k$.}

\medskip
An {\bf ear decomposition} of a digraph $D$ is a sequence
${\cal E}=(P_0,P_1,P_2,\ldots{}, P_t)$, where $P_0$ is a cycle or a vertex
and  each $P_i$ is a path, or a cycle with the following properties:
\begin{enumerate}
\item[(a)] $P_i$ and $P_j$ are arc-disjoint when $i\neq j$.
\item[(b)] For each $i=0,1,\ldots{},t$: let $D_i$ denote the digraph with vertices
 $\bigcup_{j=0}^{i}V(P_j)$ and arcs $\bigcup_{j=0}^{i}A(P_j)$.
If $P_i$ is a cycle, then it has precisely one vertex in
common with $V(D_{i-1})$. Otherwise
the end-vertices of $P_i$ are  distinct vertices
of $V(D_{i-1})$ and no other vertex of $P_i$ belongs to $V(D_{i-1})$.

\item[(c)] $\bigcup_{j=0}^{t}A(P_j)=A(D).$ 
\end{enumerate}

Each $P_i$, $0\leq i\leq t$, is called an  {\bf ear} of ${\cal E}$.  The {\bf size} of an ear $P_i$ is the number $|A(P_i)|$ of arcs in the ear. 
 An ear $P_i$ is  {\bf trivial}
 if $|A(P_i)|=1$. All other ears are {\bf non-trivial}.

The following is easy to show, see e.g. \cite[Section 5.3]{bang2009}.

\begin{theorem}
\label{eardecomp}
Let $D$ be a digraph on at least two vertices. Then
$D$ is strong if and only if it has an ear decomposition.
Furthermore, if $D$ is strong,
every cycle can be used as starting
cycle $P_0$ for an ear decomposition of $D$.
\end{theorem}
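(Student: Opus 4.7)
The plan is to prove the two directions of the equivalence separately, and observe that the proof of the nontrivial direction works for an arbitrary starting cycle, which gives the "furthermore" statement for free.

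For the easy direction, I would assume that $D$ admits an ear decomposition $(P_0,P_1,\ldots,P_t)$ and show by induction on $i$ that each $D_i$ is strong. The base case is immediate since $D_0$ is either a cycle (strong by definition) or a single vertex (but then some later ear must add the first new vertex as a cycle through it, and a quick check shows we can take the first genuinely nontrivial ear as $P_0$; alternatively we assume $D$ has at least two vertices so $P_0$ is a cycle). For the inductive step, suppose $D_{i-1}$ is strong and consider $P_i$. If $P_i$ is a cycle meeting $D_{i-1}$ in a single vertex $v$, then every new vertex of $P_i$ can reach $v$ and be reached from $v$ along $P_i$, and $v$ is already connected to every vertex of $D_{i-1}$; thus $D_i$ is strong. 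If $P_i$ is a path from $x$ to $y$ with $x,y\in V(D_{i-1})$ and internal vertices outside $D_{i-1}$, the same argument applies via the subpaths of $P_i$ from $x$ and to $y$.

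For the nontrivial direction, let $D$ be strong on at least two vertices and let $C$ be any cycle in $D$ (which exists since $D$ is strong). Set $P_0:=C$ and build $D_0,D_1,\ldots$ greedily, maintaining that each $D_i$ is a strong subdigraph of $D$. At stage $i$, assume $D_{i-1}\neq D$ and distinguish two cases. Case 1: $V(D_{i-1})\subsetneq V(D)$. Since $D$ is strong, there is an arc $uv\in A(D)$ with $u\in V(D_{i-1})$ and $v\notin V(D_{i-1})$. Since $D$ is strong, $v$ can reach $V(D_{i-1})$ in $D$; let $Q$ be a shortest such path, which uses only vertices outside $V(D_{i-1})$ until its last vertex. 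Then the concatenation of $uv$ and $Q$ is a path from $u$ to some $w\in V(D_{i-1})$ (possibly with $u=w$, in which case it is a cycle through $u$), all of whose internal vertices lie outside $V(D_{i-1})$; take it as $P_i$. Case 2: $V(D_{i-1})=V(D)$ but $A(D_{i-1})\subsetneq A(D)$. Pick any arc $uv\in A(D)\setminus A(D_{i-1})$ and take $P_i:=uv$, a trivial ear. In both cases, adding $P_i$ preserves strong connectedness by the forward-direction argument, and the sum of unused vertices plus unused arcs strictly decreases, so the procedure terminates with $D_t=D$.

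The sequence $(P_0,\ldots,P_t)$ thus produced satisfies conditions (a)--(c) of the definition of an ear decomposition by construction. Finally, the "furthermore" statement is immediate: since the choice of $C$ was arbitrary, any cycle of $D$ can be used as the starting cycle $P_0$. The only place that requires a bit of care is verifying in Case 1 that the path $Q$ can indeed be chosen to avoid $V(D_{i-1})$ except at its last vertex, but this follows from taking $Q$ to be shortest among paths from $v$ to $V(D_{i-1})$; otherwise, the prefix ending at an earlier vertex of $V(D_{i-1})$ would be a shorter such path. I expect no genuine obstacle beyond this routine verification.
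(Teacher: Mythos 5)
Your argument is correct, and it is the standard proof of this classical fact; the paper itself does not prove Theorem~\ref{eardecomp} but simply cites it (``easy to show, see e.g.\ Section~5.3 of \cite{bang2009}''), and your greedy construction growing a strong subdigraph from an arbitrary cycle, plus the routine induction for the converse, is essentially the textbook argument being referenced. The only small awkwardness is your base case: when $P_0$ is a single vertex there is no need to re-root the decomposition at a later ear, since a one-vertex digraph is strong and your inductive step (the next ear through it is necessarily a cycle) already applies.
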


A digraph $D$ is {\bf semicomplete} if there is at least one arc between $u$ and $v$ for every pair of distinct vertices $u,v$, that is,  $UG(D)$ is a complete graph. A {\bf tournament} is a semicomplete digraph with no cycle of length 2. The following well-known result, due to Camion, was originally proved for tournaments but it is easy to see that it also holds for semicomplete digraphs.

  \begin{theorem}[Camion~\cite{camionCRASP249}]
    \label{thm:camion}
    Every strong semicomplete digraph has a hamiltonian cycle.
  \end{theorem}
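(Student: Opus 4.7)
The plan is to show that a longest directed cycle $C = x_0 x_1 \cdots x_{k-1} x_0$ of $D$ is hamiltonian, by deriving a contradiction if $V(C) \subsetneq V(D)$. The central tool is an insertion observation: if some vertex $v \notin V(C)$ admits an index $i$ (mod $k$) with $x_i \to v$ and $v \to x_{i+1}$, then replacing the arc $x_i x_{i+1}$ by the path $x_i \to v \to x_{i+1}$ produces a cycle of length $k+1$, contradicting the maximality of $C$.

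Fix a vertex $v \notin V(C)$; since $D$ is semicomplete, each $x_i$ is joined to $v$ by at least one arc. Traversing $C$, the insertion observation forbids an in-neighbour of $v$ from being immediately followed by an out-neighbour of $v$, so by iterating this forbidden transition around the cycle, either every $x_i$ is an in-neighbour of $v$ or every $x_i$ is an out-neighbour of $v$. A single extra application of the insertion observation rules out any surviving digon between $v$ and $V(C)$: for instance, if every $x_i \to v$ and in addition $v \to x_j$ for some $j$, then $x_{j-1} \to v \to x_j$ is an insertion. Letting $A$ be the set of outside vertices that dominate $V(C)$ with no in-arcs from $V(C)$, and $B$ the set of outside vertices dominated by $V(C)$ with no out-arcs to $V(C)$, we obtain a partition $V(D) \setminus V(C) = A \sqcup B$.

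The finishing step invokes strong connectivity of $D$. If $B = \emptyset$ then no arc leaves $V(C)$ (since every outside vertex is in $A$, hence has no in-arc from $V(C)$), contradicting strong connectivity; symmetrically $A \neq \emptyset$. Pick $a \in A$ and $b \in B$; by semicompleteness there is at least one arc between them. If some such pair satisfies $b \to a$, then
\[
a \to x_0 \to x_1 \to \cdots \to x_{k-1} \to b \to a
\]
is a cycle of length $k+2$, contradicting maximality. Otherwise every arc between $A$ and $B$ is directed from $A$ to $B$, so every vertex of $B$ has all its out-arcs inside $B$; a sink strong component of $D\langle B \rangle$ then forms a non-empty proper closed set in $D$, again contradicting strong connectivity.

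The main obstacle I expect is the case analysis around digons in the semicomplete setting, since these complicate the in-neighbour/out-neighbour dichotomy compared with the tournament case Camion originally treated; the key point, however, is that each potential digon is killed by a further application of the insertion observation, so the argument remains essentially the tournament proof with this small extra care.
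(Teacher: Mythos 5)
The paper does not give a proof of Camion's theorem; it merely cites it and remarks that the tournament version ``easily'' extends to semicomplete digraphs. Your longest-cycle argument is a correct and complete proof of that extension: the insertion observation applied around the cycle does force every outside vertex to dominate all of $V(C)$ or be dominated by all of $V(C)$ (and, as you note, a second application of the same observation rules out digons between an outside vertex and the cycle, which is precisely the only new point beyond the tournament case); the partition $V(D)\setminus V(C)=A\sqcup B$, the non-emptiness of both parts via strong connectivity, and the final dichotomy (either an arc $b\to a$ yields a longer cycle, or $B$ is a non-empty proper closed set) are all sound. One tiny simplification: once every out-arc of $B$ stays inside $B$, the set $B$ itself is already a non-empty proper closed set, so passing to a terminal strong component of $D\langle B\rangle$ is unnecessary. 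Otherwise this is a clean, self-contained proof of the result the paper takes as known.
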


  We shall also use the following generalization of Camion's theorem which, in particular, implies that every strong tournament $T$ has at least two vertices $x_1,x_2$ such that $T-x_i$ is strong.

  \begin{theorem}[Moon~\cite{moonCMB9}]
    \label{thm:moon}
    Every strong semicomplete digraph is vertex-pancyclic, that is, it has cycles of all lengths $3,4,\ldots{},n$ through each vertex.
    \end{theorem}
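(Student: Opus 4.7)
The plan is to prove Theorem~\ref{thm:moon} by induction on the cycle length $k\in\{3,4,\ldots,n\}$: for each such $k$, I will show that every vertex $v$ of $D$ lies on a directed cycle of length $k$.

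For the base case $k=3$, fix a vertex $v$ and let $C=v\,u_1\,u_2\ldots u_{\ell-1}\,v$ be a shortest cycle through $v$, which exists because $D$ is strong. If $\ell\ge 4$, semicompleteness of $D$ provides an arc between $v$ and $u_2$; an arc $v\to u_2$ would give the strictly shorter cycle $v\,u_2\,u_3\ldots u_{\ell-1}\,v$, contradicting the choice of $\ell$, so $u_2\to v$, and then $v\,u_1\,u_2\,v$ is a $3$-cycle through $v$. Hence $\ell=3$ and the base case holds.

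For the inductive step, assume $3\le k<n$ and that $v$ lies on a $k$-cycle $C=v_0v_1\ldots v_{k-1}v_0$ with $v_0=v$. Let $W=V(D)\setminus V(C)$, which is non-empty. First suppose some $w\in W$ has both an in-neighbour and an out-neighbour on $C$. Walking cyclically around $C$, the direction of arcs between $w$ and the consecutive vertices must switch somewhere, so there is an index $i$ with $v_i\to w$ and $w\to v_{i+1}$ (indices modulo $k$); inserting $w$ between $v_i$ and $v_{i+1}$ then yields the desired $(k+1)$-cycle through $v$. Otherwise, semicompleteness forces every $w\in W$ to be joined to every vertex of $C$, so $W=W^+\cup W^-$, where $W^+$ collects the $w\in W$ whose arcs to $V(C)$ all leave $w$ and $W^-$ the $w\in W$ whose arcs to $V(C)$ all enter $w$. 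Applying strong connectivity to the set $V(C)$ gives $W^-\neq\emptyset$ (arcs must leave $V(C)$) and $W^+\neq\emptyset$ (arcs must enter $V(C)$ from outside). If there exist $u\in W^+$ and $w\in W^-$ with $w\to u$, then for any $i\neq 0$ the $(k+1)$-cycle
\[
v_0\,v_1\ldots v_{i-1}\,w\,u\,v_{i+1}\ldots v_{k-1}\,v_0
\]
works, since the three non-$C$ arcs $v_{i-1}\to w$, $w\to u$ and $u\to v_{i+1}$ are all present by the definitions of $W^+$ and $W^-$. Otherwise every arc between $W^+$ and $W^-$ is directed from $W^+$ to $W^-$; but then $W^-$ has no outgoing arc to $V\setminus W^-$, contradicting the strong connectivity of $D$.

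I expect the main obstacle to be the second case of the inductive step, where no vertex of $W$ can simply be inserted into $C$. There, one must combine semicompleteness (which orients every arc between $W^+$ and $W^-$) with strong connectivity (which rules out the unfavourable orientation) to produce the two-vertex detour $w\,u$ that extends $C$ by exactly one.
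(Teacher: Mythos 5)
There is a gap in your base case. The theorem is stated for semicomplete digraphs, which, unlike tournaments, may contain $2$-cycles, so a shortest cycle through $v$ can have length $\ell = 2$. Your argument rules out $\ell \geq 4$ but does not rule out $\ell = 2$, and in that case no $3$-cycle through $v$ has been exhibited; the conclusion ``hence $\ell = 3$'' tacitly assumes $\ell \geq 3$, which is automatic for tournaments but not here. The cleanest repair is to take $C$ to be a shortest cycle through $v$ among those of length at least $3$, and to observe that such a cycle exists by Camion's theorem (Theorem~\ref{thm:camion}, stated just before Moon's in the paper): a Hamiltonian cycle passes through $v$ and has length $n \geq 3$. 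With this modification your argument for $\ell \geq 4$ goes through unchanged and forces $\ell = 3$. Alternatively one could give a short direct argument for the $3$-cycle, or reduce the whole theorem to the tournament case by deleting one arc of each $2$-cycle while retaining a Hamiltonian cycle; the appeal to Camion is the most economical fix in the paper's context.

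The inductive step is correct, including in the presence of $2$-cycles: a $2$-cycle between $w \in W$ and some $v_j$ on $C$ places $w$ in your first case; the walking argument does locate a consecutive pair with $v_i \to w$ and $w \to v_{i+1}$; and the analysis of $W^+$, $W^-$ and of arcs between them, combined with strong connectivity, is sound (the requirement $i\neq 0$ is needed and available since $k\geq 3$). Since the paper cites Moon's theorem without giving a proof, there is no in-text proof to compare against; your argument is the classical insertion proof for tournaments lifted to semicomplete digraphs, and once the base case is patched it is complete.
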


\section{Connectivity of low chromatic spanning  subgraphs of  graphs}\label{sec:Ecsec}

In this section, we first prove Theorem \ref{thm:maxcut} which we recall.
 
 \medskip

\noindent~{\bf Theorem~\ref{thm:maxcut}.}\ Let $G=(V,E)$ be a graph and  $2\leq k\leq |V|$ be an integer.
There is a spanning $k$-partite subgraph $H$ of  $G$ such that $\lambda(H)\geq \lceil{}\frac{k-1}{k}\lambda(G)\rceil$.

\begin{proof}

For a given partition ${\cal P}=(V_1,\ldots{},V_k)$ \jbj{of $V$}, we denote by $E_G({\cal P})$ the set of edges that go between different sets in $\cal P$, that is, $E_G({\cal P})$ is the edge set of $G[V_1,\ldots{},V_k]$. A  {\bf maximum $k$-partition} of $G$ is a $k$-partition $(V_1,\ldots{},V_k)$ of $G$ such that the number of edges in the spanning $k$-partite subgraph  $G[V_1,\ldots{},V_k]$  
 is maximized.
\FH{Let $(V_1,V_2,\ldots{},V_k)$ be a maximum $k$-partition of $G$ and
set $H=G[V_1,\ldots{},V_k]$.}

Let $X$ be a non-empty proper subset of $V$. Then $d_G(X)\geq \lambda(G)$ by the definition of $\lambda(G)$.
The bipartition $(X,V\setminus X)$ induces a bipartition of each set $V_i$ into $X_i=V_i\cap X$ and $Y_i=V_i\cap (V\setminus X)$.
Now it follows that 
\begin{equation}\label{Zdeg}
d_G(X)=\sum_{i,j\in [k]}d_G(X_i,Y_j)
\end{equation}

\begin{equation}
d_H(X)=\sum_{i\neq j\in [k]}d_G(X_i,Y_j).
\end{equation}
We claim that the later is at least $\lceil\frac{k-1}{k}d_G(X)\rceil$.\\

\FH{
We can write $d_G(X)$ and $d_H(X)$ as 
\begin{eqnarray}
d_G(X) & = & \sum_{\ell=0}^{k-1}e_{\pi_{\ell}} \label{contributions} \\
d_H(X) & = & \sum_{\ell=1}^{k-1}e_{\pi_{\ell}}
\end{eqnarray} 
where $\pi_{\ell}=(1234\ldots{}k)^{\ell}$, that is, the cyclic permutation which shifts the indices $1,2,\ldots{},k$ cyclically to the right $\ell$ times and $e_{\pi_{\ell}}=\sum_{i=1}^kd_G(X_i,Y_{\pi_{\ell}(i)})$. Hence $\pi_0$ is the trivial permutation that fixes everything and $e_{\pi_0}= \sum_{i\in [k]}d_G(X_i,Y_i)$ 
}

As ${\cal P}$ is a maximum $k$-partition, $e_{\pi_0}\leq e_{\pi_i}$ for $i\in [k-1]$, so 
(\ref{contributions}) implies that $e_{\pi_0}\leq \lfloor{}\frac{d_G(X)}{k}\rfloor$ and thus $d_H(X)\geq \lceil{}\frac{k-1}{k}d_G(X)\rceil\geq \lceil{}\frac{k-1}{k}\lambda(G)\rceil$. As $X$ was arbitrary, we conclude that 
$\lambda(H)\geq \lceil{}\frac{k-1}{k}\lambda(G)\rceil$.
\end{proof}

The statement above uses a maximum $k$-partition for convenience. It is well-known that finding such a partition is NP-hard \cite{aroraFOCS33} so the proof of Theorem \ref{thm:maxcut} does not immediately give a polynomial-time algorithm for finding the desired spanning subgraph $H$. However, the idea used in the proof can be made algorithmic. 
\begin{corollary}
There exists a polynomial-time algorithm which, given a graph $G$ and  an integer $2\leq k\leq |V(G)|$, constructs a spanning $k$-partite subgraph $H$ of  $G$ such that $\lambda(H)\geq \lceil{}\frac{k-1}{k}\lambda(G)\rceil$.
\end{corollary}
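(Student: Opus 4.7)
The plan is to make the proof of Theorem~\ref{thm:maxcut} algorithmic by replacing the use of a (globally) maximum $k$-partition with a partition that is merely locally optimal with respect to a family of ``cut-and-shift'' moves, each of which can be detected by a single global min-cut computation.

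Define the following update. Given a current $k$-partition ${\cal P}=(V_1,\ldots,V_k)$, a non-empty proper subset $X\subset V$, and a non-identity cyclic shift $\pi_\ell=(1\,2\,\cdots\,k)^\ell$ with $\ell\in[k-1]$, set $X_i=V_i\cap X$ and $Y_i=V_i\setminus X$, and form ${\cal P}'=(V'_1,\ldots,V'_k)$ by $V'_i=X_i\cup Y_{\pi_\ell(i)}$. A direct count of the edges inside the parts of ${\cal P}$ and ${\cal P}'$, using exactly the permutation bookkeeping that appears in the proof of Theorem~\ref{thm:maxcut}, gives
\[
|E(G[{\cal P}'])|-|E(G[{\cal P}])|=e_{\pi_0}-e_{\pi_\ell},
\]
so a cut-and-shift strictly increases the edge count of the spanning $k$-partite subgraph whenever $e_{\pi_\ell}<e_{\pi_0}$.

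The algorithm is now straightforward. Compute $\lambda(G)$ in polynomial time and set $M=\lceil\frac{k-1}{k}\lambda(G)\rceil$. Start from any $k$-partition, for instance $(V,\emptyset,\ldots,\emptyset)$. At each iteration, form $H=G[{\cal P}]$ and compute $\lambda(H)$ together with a witnessing minimum cut $X$, via a Gomory--Hu tree or $O(n)$ max-flow calls. If $\lambda(H)\geq M$, return $H$. Otherwise $d_H(X)=\lambda(H)<M$, and since $d_G(X)\geq\lambda(G)$ we have $d_H(X)<\lceil\frac{k-1}{k}d_G(X)\rceil$. Compute the $k$ values $e_{\pi_0},\ldots,e_{\pi_{k-1}}$ directly from ${\cal P}$ and $X$ in $O(k|E|)$ time, select an $\ell\in[k-1]$ minimising $e_{\pi_\ell}$, and apply the corresponding cut-and-shift.

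The key point to verify, and the only real obstacle, is that such an improving $\ell$ always exists; this is exactly the averaging argument from the proof of Theorem~\ref{thm:maxcut} read in reverse. If $e_{\pi_\ell}\geq e_{\pi_0}$ for every $\ell\in[k-1]$, then $d_H(X)=\sum_{\ell=1}^{k-1}e_{\pi_\ell}\geq(k-1)e_{\pi_0}=(k-1)(d_G(X)-d_H(X))$, which rearranges to $d_H(X)\geq\lceil\frac{k-1}{k}d_G(X)\rceil$, contradicting the choice of $X$. Consequently each iteration strictly increases $|E(H)|$ by at least $1$, so the procedure halts after at most $|E(G)|$ iterations, each of polynomial cost, yielding the required polynomial-time algorithm.
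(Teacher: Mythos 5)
Your proof is correct and follows essentially the same approach as the paper: starting from an arbitrary $k$-partition, repeatedly computing a minimum cut $X$ of the current cut graph $H$, and performing a cyclic ``cut-and-shift'' along $X$ that strictly increases $|E(H)|$, terminating after at most $|E(G)|$ iterations. Your verification that an improving shift always exists (by the averaging argument with $e_{\pi_0},\ldots,e_{\pi_{k-1}}$) is exactly the argument implicit in the paper's proof of Theorem~\ref{thm:maxcut}, which the paper's corollary proof simply cites.
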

\begin{proof}
It is well-known that we can find the edge-connectivity of a given graph $G$ as well as a non-empty proper subset $W$ of its vertices satisfying that $d_G(W)=\lambda(G)$ in polynomial time (e.g. using flows, see e.g. \cite[Section 5.5]{bang2009}, or using maximum adjacency orderings \cite{nagamochiSJDM5}). We claim that we can obtain the desired subgraph $H$ using such an algorithm at most $|E|$ times.

Let $G=(V,E)$ and $2\leq k\leq |V|$ be given. Start from an arbitrary $k$-partition ${\cal P}_0=(V^0_1,\ldots{},V^0_k)$ of $V$ and let $H_0=G[V^0_1,\ldots{},V^0_k]$ be the spanning $k$-partite graph induced by ${\cal P}_0$. If $\lambda(H_0)\geq \lceil\frac{k-1}{k}\lambda(G)\rceil$, we are done, so assume this is not the case and let $X$ be a non-empty proper subset of $V$ for which $d_{H_0}(X)<\lceil\frac{k-1}{k}\lambda(G)\rceil$. Then we consider the sets $X^0_1,\ldots{},X^0_k, Y^0_1,\ldots{},Y^0_k$, where $X^0_i=V^0_i\cap X$ and $Y^0_i=V^0_i\cap (V\setminus X)$, $i\in [k]$, as in the proof of Theorem \ref{thm:maxcut}. Since we have  $\lambda(H_0)<\lceil\frac{k-1}{k}\lambda(G)\rceil$ it follows from the proof that there exists an index $q\in [k-1]$ such that 
\begin{equation}
\label{improve}
\sum_{i\in [k]}d_G(X^0_i,Y^0_i)>\sum_{i\in [k]}d_G(X^0_i,Y^0_{i+q}).
\end{equation}
 
Now consider the partition ${\cal P}_1=(V^1_1,\ldots{},V^1_k)$, where 
$V^1_i=X^0_i\cup Y^0_{i+q}$. By (\ref{improve}) we see that $|E_G({\cal P}_1)|>|E_G({\cal P}_0)|$.

As long as the  spanning $k$-partite graph induced by the partition is not the desired one, repeating the steps above, we find a $k$-partition with more edges across it than the current one.
Hence the process will stops after at most $|E|$ loops. When it stops, we necessarily have a $k$-partition inducing a digraph $H$ such that  $\lambda(H)\geq \lceil{}\frac{k-1}{k}\lambda(G)\rceil$.
\end{proof}

\begin{corollary}
  \label{cor:k+1ec}
Every graph $G$ contains a spanning $(\lambda(G)+1)$-chromatic $\lambda(G)$-edge-connected subgraph $H$. 
\end{corollary}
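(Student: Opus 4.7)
The plan is to apply Theorem~\ref{thm:maxcut} with the specific choice $k=\lambda(G)+1$. The corollary essentially asks for exactly the tradeoff that this theorem provides, with $k$ tuned so that the multiplicative factor $\frac{k-1}{k}$ applied to $\lambda(G)$ rounds back up to $\lambda(G)$.

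First I would dispose of small cases. If $\lambda(G)=0$, the edgeless spanning subgraph is $1$-chromatic and $0$-edge-connected. Otherwise $\lambda(G)\geq 1$, and since any graph of edge-connectivity $\lambda$ has minimum degree at least $\lambda$, it has at least $\lambda(G)+1$ vertices, so the hypothesis $k\leq |V(G)|$ of Theorem~\ref{thm:maxcut} is satisfied.

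Next, I would invoke Theorem~\ref{thm:maxcut} to obtain a spanning $(\lambda(G)+1)$-partite subgraph $H$ with
\[
\lambda(H)\;\geq\;\left\lceil \frac{\lambda(G)}{\lambda(G)+1}\,\lambda(G)\right\rceil.
\]
The only computation needed is to verify that this ceiling is $\lambda(G)$. Writing
\[
\frac{\lambda(G)^{2}}{\lambda(G)+1}\;=\;\lambda(G)-\frac{\lambda(G)}{\lambda(G)+1},
\]
and noting that $0<\frac{\lambda(G)}{\lambda(G)+1}<1$ when $\lambda(G)\geq 1$, the ceiling is indeed $\lambda(G)$.

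Hence $\lambda(H)\geq \lambda(G)$, and because $H$ is $(\lambda(G)+1)$-partite it is in particular $(\lambda(G)+1)$-colourable, so $\chi(H)\leq \lambda(G)+1$. I do not foresee any genuine obstacle: the corollary is a direct specialization of Theorem~\ref{thm:maxcut} at the smallest value of $k$ for which the edge-connectivity bound does not degrade.
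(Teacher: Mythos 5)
Your argument is correct and is exactly the intended derivation: the paper states this corollary as an immediate consequence of Theorem~\ref{thm:maxcut} (without writing out the arithmetic), and you have simply made explicit the substitution $k=\lambda(G)+1$, the ceiling computation showing $\left\lceil \lambda(G)^2/(\lambda(G)+1)\right\rceil=\lambda(G)$, and the boundary cases $\lambda(G)=0$ and $k\leq|V(G)|$. Nothing differs in substance from the paper's approach.
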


\FH{Note that this corollary is best possible as a complete graph $K$  has no $\lambda(K)$-chromatic $\lambda(K)$-edge-connected subgraph.}

\section{$6$-edge-connected graphs with no spanning bipartite 2T-subgraph}\label{sec:bip2Tsec}

In this section, we shall prove that there are infinitely many $6$-edge-connected graphs with no spanning bipartite 2T-subgraph. To do so, we need some preliminaries.

A bipartite graph $G$ is {\bf $\mathbf{(a,b)}$-regular} if all the vertices of one of its partite sets have degree $a$ and all vertices of the other partite set have degree $b$.
A graph $G=(V,E)$ is {\bf essentially-$\mathbf{k}$-edge-connected} if every non-trivial cut has at least $k$ edges.

\begin{proposition}
\label{prop:es6ec}
There are infinitely many $(3,5)$-regular bipartite graphs which are essentially $6$-edge-connected.
\end{proposition}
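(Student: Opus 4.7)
The plan is to produce, for every sufficiently large integer $k$, a simple $(3,5)$-biregular bipartite graph on $8k$ vertices which is essentially $6$-edge-connected. Double-counting edges forces the two parts to have sizes $5k$ (degree-$3$ side, call it $A$) and $3k$ (degree-$5$ side, call it $B$), so the task is to exhibit such graphs for arbitrarily large $k$.

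The key algebraic setup is as follows. For a non-trivial cut $(S,\bar S)$, write $a=|S\cap A|$, $b=|S\cap B|$ and $e=e_G(S)$; then the cut has exactly $3a+5b-2e$ edges. A potentially ``bad'' cut of size $\leq 5$ requires $e\geq(3a+5b-5)/2$, and combined with the trivial degree bound $e\leq\min(3a,5b)$ this already forces $|3a-5b|\leq 5$. Hence only pairs $(a,b)$ in this narrow arithmetic strip need to be killed. For small $s:=a+b$, a direct case analysis using $e\leq\min(3a,5b,ab)$ eliminates nearly all candidates; the only tight obstructions (e.g.\ $(a,b)=(5,2)$ with induced $K_{5,2}$, $(4,3)$ with $K_{4,3}$, $(5,3)$ with $K_{5,3}$) would force two $B$-vertices to have at least two common $A$-neighbours, giving a $4$-cycle. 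Thus any graph of girth at least $6$ automatically has no small bad cut.

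For the remaining (intermediate and large) sizes, I would use a first-moment argument on a random simple $(3,5)$-biregular bipartite graph obtained from the configuration model (standard arguments show simplicity and girth $\geq 6$ hold together with positive probability bounded away from $0$). For any fixed pair $(a,b)$ in the strip $|3a-5b|\leq 5$ with $a+b\leq 4k$, the expected number of bad cuts of this type is at most
\[
\binom{5k}{a}\binom{3k}{b}\cdot \mathbb{P}\!\left[e_G(S)\geq \tfrac{3a+5b-5}{2}\right],
\]
and standard configuration-model large-deviation estimates show that $e_G(S)$ is tightly concentrated around its mean $\tfrac{(5a)(3b)}{5k}\cdot\tfrac{1}{\,\cdot\,}\ll(3a+5b)/2$, so the probability above is exponentially small in $k$. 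Summing over the strip gives $o(1)$; the cut symmetry $(S,\bar S)\leftrightarrow(\bar S,S)$ reduces $|S|>4k$ to the previous case.

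The main technical obstacle is the first-moment bound for intermediate sizes near $(a,b)\approx(5k/2,3k/2)$, where the combinatorial factor $\binom{5k}{a}\binom{3k}{b}$ is exponentially large in $k$; one needs the large-deviation rate for internal edge-counts in the configuration model to genuinely dominate the corresponding entropy. Assuming this routine (but slightly delicate) estimate, a positive-probability fraction of simple $(3,5)$-biregular bipartite graphs of girth $\geq 6$ on $8k$ vertices are essentially $6$-edge-connected for every sufficiently large $k$, yielding the desired infinite family.
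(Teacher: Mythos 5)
Your strategy (random $(3,5)$-biregular graphs from the configuration model plus a first-moment argument over the strip $|3a-5b|\le 5$) is different in kind from the paper's proof, which is an explicit deterministic construction: take a cubic bipartite essentially $4$-edge-connected graph $H$ and a cubic essentially $4$-edge-connected graph $L$, subdivide every edge of $L$, and identify the subdivision vertices with one side of $H$; essential $6$-edge-connectivity is then verified by a short Menger-based case analysis. Your cut bookkeeping is correct (a cut with $a=|S\cap A|$, $b=|S\cap B|$, $e$ internal edges has size $3a+5b-2e$, so a bad cut forces $e\ge(3a+5b-5)/2$ and $|3a-5b|\le 5$), and the plan is in principle viable: the reason the first moment should win is that the count $\binom{5k}{a}\binom{3k}{b}$ has entropy rate governed by the $8k$ vertices, while the probability that (almost) all of the $3a\approx 5b$ half-edges at $S$ pair internally is about $\binom{15k}{3a}^{-1}$, whose rate is governed by the $15k$ edges, giving an exponent of roughly $-7kH(\alpha)$ for $a\approx 5\alpha k$, $b\approx 3\alpha k$.

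As written, however, there is a genuine gap: the step you yourself call the main technical obstacle — the upper-tail estimate that must dominate the entropy for linear-size $S$ — is not proved but explicitly assumed. Note also that what is needed is not "tight concentration around the mean" but an exponential upper-tail bound at a threshold far above the mean (the mean of $e_G(S)$ is $ab/k$; your displayed formula for it contains a placeholder), and this bound must be established uniformly over all sizes: the small-size regime (handled by girth $\ge 6$ and an "e.g."-style case analysis) and the intermediate/large regime are never delimited, so it is not clear that the two arguments together cover every non-trivial pair $(a,b)$ in the strip, nor is the passage to the conditional space (simple, girth $\ge 6$) spelled out, though that part is standard. Until the large-deviation computation is actually carried out across all ranges, this is a plausible programme rather than a proof; the paper's constructive route reaches the same conclusion with no asymptotic estimates at all.
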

\begin{proof}
We will give an explicit construction of such a graph on $n$ vertices for every $n$ divisible by 8.
Let $H$ be a cubic bipartite essentially 4-edge-connected graph on $\frac{3}{4}n$ vertices with partite sets $A$, $C$ (each of order $\frac{3}{8}n$). Let $L$ be a cubic essentially 4-edge-connected graph on $\frac{n}{4}$ vertices disjoint from $H$; it has $\frac{3}{8}n$ edges. Now subdivide every edge of $L$ once as to obtain the graph $S$, and let $G$ be obtained from the union of $H$ and $S$ by 
identifying  the subdivision vertices of $S$ one-to-one with the vertices of $A$.  The resulting graph $G$ is bipartite with bipartition $(A,B)$, with $B = V(L) \cup C$. Moreover, every vertex in $A$ has degree 5 and every vertex in $B$ has degree 3. See Figure~\ref{fig:G16} for an example on 16 vertices.
We claim that $G$ is essentially 6-edge-connected.

\newcommand{\LineB}[1]{\draw [color=blue, line width=0.03cm] #1}
\newcommand{\LineR}[1]{\draw [color=red, line width=0.03cm] #1}
\newcommand{\Nodebox}[3]{\node (#1)  at (#2) [vertexbox] {$#3$}}
\newcommand{\Node}[3]{\node (#1) at (#2) [vertexN] {$#3$}}
\tikzstyle{vertexX}=[circle,draw, top color=gray!10, bottom color=gray!70, minimum size=14pt, scale=0.6, inner sep=0.1pt]
\tikzstyle{vertexY}=[circle,draw, top color=green!10, bottom color=green!70, minimum size=14pt, scale=0.6, inner sep=0.1pt]
\tikzstyle{vertexZ}=[circle,draw, top color=orange!10, bottom color=orange!70, minimum size=14pt, scale=0.6, inner sep=0.1pt]
\tikzstyle{vertexbox}=[rectangle, draw, minimum size=14pt, scale=0.6, inner sep=0.5pt]
\tikzstyle{vertexr}=[circle,draw, color=red, minimum size=7pt, scale=0.6, inner sep=0.5pt]
\tikzstyle{vertexb}=[circle,draw, color=blue,minimum size=7pt, scale=0.6, inner sep=0.5pt]
\tikzstyle{vertexN}=[circle,draw, minimum size=14pt, scale=0.6, inner sep=0.5pt]
\tikzstyle{vertexR}=[circle,draw, top color=red!100, bottom color=red!100, minimum size=7pt, scale=0.6, inner sep=0.5pt]
\tikzstyle{vertexB}=[circle,draw, top color=blue!100, bottom color=blue!100, minimum size=7pt, scale=0.6, inner sep=0.5pt]

\begin{figure}[H]
\begin{center}

\begin{tikzpicture}[scale=0.6]
\Nodebox{c6}{6,1}{c_6};
\Nodebox{w}{6.75,5.25}{w};
\Nodebox{c5}{4,4}{c_5};
\Nodebox{c3}{8,4}{c_3};
\Nodebox{x}{2.5,6}{x};
\Nodebox{c2}{10,8}{c_2};
\Nodebox{c1}{6,8}{c_1};
\Nodebox{c4}{2,8}{c_4};
\Nodebox{y}{9.5,6}{y};
\Nodebox{z}{4.75,5.25}{z};
\Node{a1}{6,10}{a_1};
\Node{a2}{8,7}{a_2};
\Node{a3}{10,4}{a_3};
\Node{a4}{4,7}{a_4};
\Node{a5}{2,4}{a_5};
\Node{a6}{6,3}{a_6};
\LineB{(a1) to (c1)};
\LineB{(a2) to (c2)};
\LineB{(a3) to (c3)};
\LineB{(a4) to (c4)};
\LineB{(a5) to (c5)};
\LineB{(a6) to (c6)};
\LineB{(c1) to (a4)};
\LineB{(c5) to (a4)};
\LineB{(c5) to (a6)};
\LineB{(c3) to (a6)};
\LineB{(c1) to (a2)};
\LineB{(c3) to (a2)};
\LineB{(a1) to (c4)};
\LineB{(a1) to (c2)};
\LineB{(a5) to (c4)};
\LineB{(a5) to (c6)};
\LineB{(a3) to (c2)};
\LineB{(a3) to (c6)};
\LineR{(z) to (a2)};
\LineR{(z) to (a4)};
\LineR{(z) to (a6)};
\LineR{(x) to (a1)};
\LineR{(x) to (a5)};
\LineR{(x) to (a4)};
\LineR{(y) to (a1)};
\LineR{(y) to (a2)};
\LineR{(y) to (a3)};
\LineR{(w) to (a5)};
\LineR{(w) to (a6)};
\LineR{(w) to (a3)};
\end{tikzpicture}
\caption{An essentially 6-edge-connected bipartite graph $G$. It is obtained from the blue graph $H=C_6\square K_2$ and the red graph $S$ which is  a subdivision of a $K_4$ on vertices $\{x,y,z,w\}$ by identifying the six subdivision vertices of $S$  with the vertices $\{a_1,a_2,a_3,a_4,a_5,a_6\}$ of  $H$.}\label{fig:G16}
\end{center}
\end{figure}
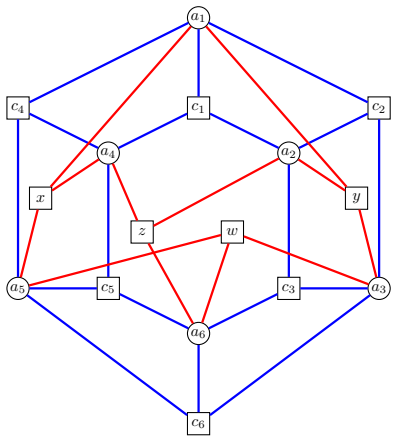

\FH{Let $\Gamma=G[X, V(G)\setminus X]$ be a non-trivial cut of $G$ with the minimum number of edges.
We shall prove that it has at least six edges.

If $G\langle X \rangle$ is not connected, then by \jbj{the minimality of $\Gamma$, $X$ consists of} two non-adjacent vertices, and so $\Gamma$ has at least six edges.
Similarly, we get the result if $G-X$ is not connected. Henceforth, we may assume that both $G\langle X \rangle$ and $G-X$ are
connected.

Assume first that $G\langle X\rangle$ contains an edge $u_1v_1$ of  $E(H)$ and $G-X$ contains an edge $u_2v_2$ of $E(H)$.
Since $H$ is essentially $4$-connected, by Menger's theorem, there are four edge-disjoint paths from $\{u_1,v_1\}$ to $\{u_2,v_2\}$ using only  edges from $E(H)$. Moreover, there are at least two edge-disjoint paths from $\{u_1,v_1\}$ to $\{u_2,v_2\}$ using only edges from $E(S)$. Each of these six paths contains a distinct edge of $E(\Gamma)$. Thus $\Gamma$ has at least six edges.

Henceforth, we may assume that one of $G\langle X\rangle$ and $G-X$ contains no edge of $H$.
By symmetry, we may assume $E(G\langle X\rangle) \cap E(H)=\emptyset$.

If $X$ contains two subdivision vertices of $S$ (which are identified to vertices of $H$), then $\Gamma$  contains the six edges
of $E(H)$ incident to those vertices. If not, then $X$ consists of a single subdivision vertex $x$ plus one or two neighbours of $x$ in $S$; if $|X)|=2$ then $E(\Gamma)$ consists of three edges from $E(H)$ and three edges from $E(S)$, and if $|X|=3$ then $E(\Gamma)$ consists of three edges from $E(H)$ and four edges from $E(S)$. In either case, $|E(\Gamma)|$ is at least 6.

This proves that the $G$ is essentially 6-edge-connected.}
\end{proof}

\begin{theorem}
\label{thm:7ecbip2T}
There are infinitely many $6$-edge-connected graphs with no spanning bipartite 2T-subgraph.
\end{theorem}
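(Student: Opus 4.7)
My plan is to build the examples from the bipartite graphs provided by Proposition~\ref{prop:es6ec}. Let $G_0$ be a $(3,5)$-regular essentially $6$-edge-connected bipartite graph on $n$ vertices (with $n$ divisible by $8$ and $n\geq 24$) with bipartition $(A,B)$, $|A|=3n/8$, $|B|=5n/8$. Its edge count is $|E(G_0)|=5|A|=15n/8$, and the crucial numerical fact is $|E(G_0)|<2(n-1)$ for all such $n$. Hence the edges of $G_0$ alone are already too few to form a spanning $2T$-graph.

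From $G_0$ I would construct a $6$-edge-connected graph $G=G_0\cup F_A\cup F_B$, where $F_A\subseteq\binom{A}{2}$ raises each $A$-vertex's degree from $5$ to at least $6$ and $F_B\subseteq\binom{B}{2}$ raises each $B$-vertex's degree from $3$ to at least $6$. Because $G_0$ is essentially $6$-edge-connected and inserting edges never shrinks any cut, every non-trivial cut of $G$ still has size at least $6$; the degree condition handles singleton cuts, so $G$ is $6$-edge-connected. To show $G$ has no spanning bipartite $2T$-subgraph, suppose for contradiction that $H$ is one with bipartition $(X,Y)$, and decompose
\[
d_G(X,Y) \;=\; e_{G_0}(X,Y) + e_{F_A}(X\cap A,\,Y\cap A) + e_{F_B}(X\cap B,\,Y\cap B).
\]
Since $G_0$ is bipartite with parts $(A,B)$, $e_{G_0}(X,Y)\leq|E(G_0)|=15n/8$, with equality iff $(X,Y)=(A,B)$, and in that case the other two summands vanish, giving $d_G(X,Y)=15n/8<2(n-1)\leq|E(H)|$, a contradiction.

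For the remaining bipartitions, the argument has to trade off gains from $e_{F_A}$ and $e_{F_B}$ against losses in $e_{G_0}$. The main obstacle is choosing $F_A$ and $F_B$ so that for every such $(X,Y)$ either $d_G(X,Y)<2(n-1)$ holds directly, or else---when the cut still has enough edges---the cut graph $G[X,Y]$ fails to contain two edge-disjoint spanning trees via a Nash--Williams/Tutte violation on some partition $\mathcal{P}$ of $V(G)$. A typical violating $\mathcal{P}$ would collapse a moved $A$-vertex together with its $G_0$-neighbours (the kind of swap that otherwise inflates the cut beyond $15n/8$) into a single part so that many edges are absorbed inside that part. Engineering $F_A$ and $F_B$ so that such a partition is always available, for infinitely many $n$, is the heart of the proof; I would expect to exploit the finer $H\cup S$ structure underlying the construction of $G_0$ in Proposition~\ref{prop:es6ec} to do so.
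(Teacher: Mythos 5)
There is a genuine gap. Your plan gets the easy bipartition for free (the cut $(A,B)$ has only $15n/8<2(n-1)$ edges), but the entire difficulty lies in the ``remaining bipartitions'': once you add roughly $\tfrac{15n}{16}$ edges inside $B$ and $\tfrac{3n}{16}$ edges inside $A$ to push all degrees up to $6$, other bipartitions can pick up a large share of $F_A\cup F_B$ on top of a large share of $E(G_0)$, and then you must show that the resulting cut graph still fails to contain two edge-disjoint spanning trees. You explicitly defer this step (``is the heart of the proof''), offering only the hope that a suitable Nash--Williams/Tutte-violating partition can be engineered by a clever choice of $F_A$ and $F_B$. No construction of $F_A,F_B$ is given, no argument covers the general bipartition, and it is not at all clear that such a choice exists; so what is written is a programme, not a proof.

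The paper avoids this trade-off entirely by a different move: it takes the \emph{line graph} $H$ of the $(3,5)$-regular essentially $6$-edge-connected bipartite graph $G_0$. Then $H$ is automatically $6$-regular and $6$-connected (no augmentation needed, hence no new edges to control), and $H$ decomposes into $b$ edge-disjoint copies of $K_3$ and $c$ edge-disjoint copies of $K_5$, where $n=3b=5c$ is the number of edges of $G_0$, i.e.\ the number of vertices of $H$. Since any cut meets a $K_3$ in at most $2$ edges and a $K_5$ in at most $6$, \emph{every} bipartition of $V(H)$ has at most $2b+6c=\tfrac{28}{15}n<2n-2$ crossing edges, so no spanning bipartite subgraph can contain two edge-disjoint spanning trees. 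This single uniform counting bound replaces the case analysis over bipartitions that your approach leaves open; if you want to salvage your route, you would need to supply an analogous bound (or an explicit Tutte/Nash--Williams obstruction) valid for all bipartitions of your augmented graph, which is precisely what is missing.
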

\begin{proof} We shall use the family ${\cal G}$ of essentially $6$-edge-connected graphs from Proposition \ref{prop:es6ec}. Let $G\in {\cal G}$ 
have at least 16 vertices and   $b$ vertices of degree 3 and $c$ vertices of degree 5, so that $G$ has $n = 3b = 5c$ edges.  Consider the line graph $H$ of $G$. It has $n$ vertices and is the edge-disjoint union of $b$ copies of $K_3$ and $c$ copies of $K_5$. Every cut $H[X,V\setminus X]$ contains at most 2 edges of every triangle and at most 6 edges of any $K_5$. Hence a cut has at most $2b+6c = \frac{2}{3}n+ \frac{6}{5}n = \frac{28}{15}n < 2n-2$. So $H$ has no spanning bipartite subgraph with at least $2n-2$ edges and hence cannot contain a spanning bipartite 2T-subgraph. 
Moreover, since $G$ is essentially $6$-edge-connected, \jbj{the graph} $H$ is $6$-connected.

Thus the line graphs of graphs in $\cal G$ on at least 16 vertices form an infinite   class of 6-regular 6-connected graphs which do not admit a spanning bipartite 2T-subgraph.
\end{proof}

\section{Spanning strong subdigraphs with low chromatic number}\label{sec:Arcconsec}


\begin{proposition}\label{prop:3col-sub}
Every strong digraph has a spanning strong $3$-partite subdigraph  and such a subdigraph can be constructed in polynomial time.
\end{proposition}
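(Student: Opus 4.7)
The plan is to exploit the ear decomposition structure guaranteed by Theorem~\ref{eardecomp}. First I would pick any directed cycle $C$ of $D$ (which exists since $D$ is strong on at least two vertices) and extend it to an ear decomposition $(P_0=C, P_1, \ldots, P_t)$ of $D$. Let $H$ be the spanning subdigraph of $D$ consisting of $P_0$ together with all the \emph{non-trivial} ears; since trivial ears are single arcs that add no new vertices, $V(H)=V(D)$. The key observation is that the subsequence obtained by keeping only $P_0$ and the non-trivial ears, in their original order, is itself an ear decomposition of $H$: dropping trivial ears does not change the vertex set of any partial digraph, so all endpoint conditions of an ear decomposition remain satisfied. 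Hence $H$ is strong by Theorem~\ref{eardecomp}.

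It then remains to properly $3$-colour $H$, and I would do this incrementally. Start from any proper $3$-colouring of the directed cycle $P_0$ (every cycle is $3$-colourable). Whenever a non-trivial ear $P_i$ is added, its vertices lying in the previously built subdigraph (one vertex if $P_i$ is a cycle ear, two if it is a path ear) already carry colours $a$ (and $b$), while the new internal vertices form a path whose two ends must avoid $a$ and $b$ respectively. With three available colours, such an extension is always possible by a straightforward greedy argument: the first new vertex has at least two admissible colours (those distinct from $a$), every intermediate new vertex has at least two admissible colours (those distinct from the previously coloured neighbour), and the last new vertex has at least one admissible colour (the unique colour distinct from both $b$ and the colour of its already-coloured neighbour in the path). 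Processing the ears in the order of the decomposition therefore produces a proper $3$-colouring of $H$.

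For polynomiality, I would observe that an ear decomposition of a strong digraph can be constructed in polynomial time (iteratively, pick an uncovered vertex and find a short path through it returning to the current strong substructure, which exists by strong connectivity), and that the greedy extension of the colouring is clearly polynomial. I do not expect any serious obstacle in this proof: the whole argument rests on the simple but essential observation that the trivial arcs of an ear decomposition can be discarded without destroying strong connectivity of the spanning subdigraph, and these are precisely the arcs whose retention could force a fourth colour.
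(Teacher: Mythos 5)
Your proof is correct and follows essentially the same route as the paper: both keep the initial cycle together with the non-trivial ears of an ear decomposition to get a spanning strong subdigraph, and then $3$-colour it using the fact that each internal vertex of an ear has only two already-present neighbours when it is introduced (the paper phrases this as $2$-degeneracy of the underlying graph, while you run the equivalent greedy colouring ear by ear). Your observation that trivial ears can simply be dropped from an arbitrary ear decomposition does the same job as the paper's choice of adding all non-trivial ears first, so there is no substantive difference.
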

\begin{proof}
  Consider an ear-decomposition ${\cal E}=\{P_0,P_1,\ldots{},P_q,P_{q+1},\ldots{},P_t\}$ constructed from an arbitrary cycle $P_0$ by adding in each step $i\in [q]$ a shortest non-trivial ear and in steps $q+1,\ldots{},t$ trivial ears. Let $D_q$ be the union of $P_0,\ldots{},P_q$.  Then $D_q$ is a spanning subdigraph of $D$ and it is easy to see that its underlying graph $UG(D_q)$ is  $2$-degenerate because it follows from the choice of the paths $P_1,\ldots{},P_q$ that for each $i\in [q]$  the internal vertices of $P_i$ have in-degree and out-degree one in  $D_{i}$. 
    Hence $\chi(D_q)=\chi(UG(D_q))\leq 3$.
\end{proof}

A digraph is {\bf semicomplete bipartite} if its  underlying graph is a complete bipartite graph $K_{r,s}$ for some $r,s\geq 1$. A {\bf bipartite tournament} is a semicomplete bipartite digraph with no 2-cycles.
We now show that every strong semicomplete digraph on at least 6 vertices  has a spanning strong
  semicomplete bipartite subdigraph and hence has a spanning strong bipartite subdigraph.
For a semicomplete digraph on an even number of vertices it is trivial that strong connectivity is necessary and sufficient: By Theorem \ref{thm:camion}, every strong semicomplete digraph has a hamiltonian cycle. Hence it suffices to  colour the vertices of a hamiltonian cycle alternately by 1 and 2. If the order is odd, we need to do more work. We first consider tournaments. Let $T_5$ be the strong tournament that we obtain from a 3-cycle $abca$ by adding two new vertices $d,e$ and the arcs $de,ad,bd,cd,ea,eb,ec$ (see Figure \ref{fig:exceptions}).  It is easy to check that $T_5$ has no spanning bipartite subdigraph which is strong.

The following two known results will turn out to be very useful.

\begin{theorem}[Fraisse and Thomassen~\cite{fraisseGC3}] \label{FrTh}
Every \jbj{$r$-strong} tournament contains a hamiltonian cycle avoiding any $r-1$ arcs.
\end{theorem}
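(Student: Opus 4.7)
My plan is to prove the theorem by an extremal argument based on a longest cycle in $T$ avoiding the forbidden arcs, combined with a vertex-insertion argument. Let $T$ be an $r$-strong tournament and $F \subseteq A(T)$ with $|F|=r-1$; suppose for contradiction that no Hamiltonian cycle of $T$ is disjoint from $F$. Since $T$ is strong, Camion's theorem (Theorem~\ref{thm:camion}) gives that $T$ has a Hamiltonian cycle, so at least some cycle avoiding $F$ exists; pick $C = v_0 v_1 \cdots v_{k-1} v_0$ to be a longest such cycle (indices mod $k$). Set $W := V(T) \setminus V(C)$; then $W \neq \emptyset$ by assumption.

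For every $w \in W$, the tournament $T$ partitions $V(C)$ into $A_w = N^-_T(w) \cap V(C)$ and $B_w = N^+_T(w) \cap V(C)$. Call an index $i$ a \emph{transition for $w$} if $v_i \in A_w$ and $v_{i+1} \in B_w$. The central observation is the insertion principle: if $i$ is a transition for $w$ and neither of $v_i w$, $w v_{i+1}$ lies in $F$, then replacing the arc $v_i v_{i+1}$ of $C$ by the two-arc detour $v_i w v_{i+1}$ yields a cycle of length $k+1$ avoiding $F$, contradicting the maximality of $C$. Hence at every transition for every $w \in W$, at least one of $v_i w$, $w v_{i+1}$ must belong to $F$.

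Each arc of $F$ has two endpoints and can block a transition only when one endpoint lies in $W$ and the other on $C$; in that case it blocks at most one transition, for exactly one $w \in W$. Writing $t_w$ for the number of transitions for $w$, we therefore obtain the double-counting inequality $\sum_{w \in W} t_w \leq |F| = r-1$. To reach the required contradiction we must derive a matching lower bound. Partition $W$ into $W_0 = \{w : A_w = \emptyset\}$ (those dominating $V(C)$), $W_\infty = \{w : B_w = \emptyset\}$ (those dominated by $V(C)$), and $W_+ = W \setminus (W_0 \cup W_\infty)$. For each $w \in W_+$ trivially $t_w \geq 1$. For $w \in W_0$, all in-neighbours of $w$ in $T$ lie in $W\setminus\{w\}$, and $r$-strongness forces $d^-_T(w) \geq r$, hence $|W| \geq r+1$; symmetrically for $W_\infty$. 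One then splices vertices of $W_0 \cup W_\infty$ into $C$ along internal paths through $W$, each such splice creating new transitions and ultimately forcing $\sum_w t_w \geq r$, the desired contradiction.

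The main obstacle is the fine bookkeeping in this last step: simultaneously tracking how the $r-1$ arcs of $F$ can block transitions for $W_+$-vertices while possibly also obstructing the paths used to splice $W_0$- and $W_\infty$-vertices into $C$. A clean route, which I expect to be the backbone of the Fraisse--Thomassen argument, is to apply Menger's theorem using $r$-strongness: for each $w \in W$ there are $r$ internally disjoint $(V(C),w)$-paths and $r$ internally disjoint $(w,V(C))$-paths in $T$; since $|F| = r-1$, at least one pair survives in $T-F$ and can be turned into a valid insertion (after rearranging $C$ along the path), violating maximality. Getting the combinatorics of this splicing step exactly right — in particular avoiding double counting when several paths share vertices of $W$ — is where most of the work will sit.
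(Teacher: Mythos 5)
The paper does not prove Theorem~\ref{FrTh}; it is cited directly from Fraisse and Thomassen, so there is no in-paper proof for you to match against. Evaluating your proposal on its own merits: the framework is reasonable (longest cycle $C$ avoiding $F$, counting ``transitions'' for vertices $w \in W := V(T)\setminus V(C)$, and observing that every transition must be blocked by an arc of $F$), and the upper bound $\sum_{w\in W} t_w \le |F| = r-1$ is correctly argued. However, the proof does not close: the matching lower bound $\sum_{w\in W} t_w \ge r$ is nowhere established, and you acknowledge this yourself (``is where most of the work will sit''). The natural bound you actually obtain is $\sum_w t_w \ge |W_+|$, which can be as small as $0$ or $1$ (for instance when $W$ is a single vertex $w$ with all of $N^-(w)\cap V(C)$ consecutive on $C$, giving $t_w = 1$), and no contradiction results. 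The observation that $|W|\ge r+1$ when $W_0$ or $W_\infty$ is nonempty is correct but by itself adds no transitions, since those vertices contribute $t_w = 0$.

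The Menger-based repair sketched at the end also does not fill the gap. The fact that some $(V(C),w)$-path and some $(w,V(C))$-path survive in $T-F$ does not yield a longer cycle avoiding $F$: the two paths' endpoints on $C$ are arbitrary, and gluing them in generally destroys part of $C$ rather than lengthening it, unless the endpoints are consecutive or the cycle is carefully re-routed. Exactly how $r$-strongness forces a well-placed surviving insertion (or, equivalently, forces $\sum_w t_w \ge r$) is the heart of the Fraisse--Thomassen argument, and it is precisely what is missing here. As written, the proposal is a plan with a clearly identified but unresolved central step, not a proof.
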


The following lemma is well-known and easy to prove, as if $S$ is a separating set in a 
regular tournament $T$, and $(A,B)$ is a partition of $V(T-S)$ such that all arcs between 
$A$ and $B$ go from $A$ to $B$, then $S$ is at least as large as $A$ and as $B$.

\begin{lemma}[Thomassen~\cite{thomassenJCT28}] \label{strongT}
Every regular tournament is \jbj{$\lceil n/3 \rceil$-strong}.
\end{lemma}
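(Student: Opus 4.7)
The plan is to argue by contradiction: suppose $T$ is a regular tournament on $n$ vertices and $S \subseteq V(T)$ is a separator with $|S| < \lceil n/3 \rceil$. Since $T - S$ is not strong, its condensation (the DAG of strong components) has a non-trivial topological order, so we can write $V(T) \setminus S = A \cup B$ with $A, B \neq \emptyset$ and no arc from $B$ to $A$. Because $T$ is a tournament, the hint in the statement applies: every pair in $A \times B$ contributes the arc from $A$ to $B$, giving exactly $|A|\cdot|B|$ arcs between the two parts.

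Next I would double-count arcs using the regularity $d^+(v) = (n-1)/2$. Summing out-degrees over $A$,
\[
|A|\cdot\frac{n-1}{2} \;=\; \binom{|A|}{2} + e(A,S) + |A|\cdot|B|,
\]
which rearranges to $e(A,S) = \tfrac{|A|(|S|-|B|)}{2}$. Non-negativity forces $|S| \geq |B|$. Symmetrically, summing in-degrees over $B$,
\[
|B|\cdot\frac{n-1}{2} \;=\; \binom{|B|}{2} + e(S,B) + |A|\cdot|B|,
\]
yielding $e(S,B) = \tfrac{|B|(|S|-|A|)}{2} \geq 0$ and hence $|S| \geq |A|$.

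Combining the two inequalities with $|A| + |B| + |S| = n$ gives $n \leq 3|S|$, so $|S| \geq n/3$, and since $|S|$ is an integer, $|S| \geq \lceil n/3 \rceil$, contradicting our assumption. Thus every separator has size at least $\lceil n/3 \rceil$, which by definition means $T$ is $\lceil n/3 \rceil$-strong.

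There is no genuine obstacle here: the only thing to be careful about is the passage from "$T-S$ not strong" to the existence of a partition with all crossing arcs going one way (a standard consequence of taking the first strong component of $T-S$ in a topological ordering of its condensation), and the bookkeeping in the two out-degree/in-degree sums, which is routine.
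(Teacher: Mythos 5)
Your proof is correct and follows essentially the same route as the paper, which only sketches the argument: take a separating set $S$ and a partition $(A,B)$ of $V(T)\setminus S$ with all arcs going from $A$ to $B$, and use regularity to force $|S|\geq |A|$ and $|S|\geq |B|$, hence $3|S|\geq n$. Your degree double-counts simply supply the details the paper leaves implicit, so there is nothing further to add.
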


\begin{theorem}
  \label{thm:TspBT}
  Every strong tournament $T$ which is different from the $3$-cycle $C_3$ and from $T_5$ contains a spanning strong bipartite tournament.
\end{theorem}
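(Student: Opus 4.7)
The proof naturally splits on the parity of $n = |V(T)|$. When $n$ is even, Camion's theorem (Theorem \ref{thm:camion}) supplies a Hamiltonian cycle $C = v_1 v_2 \cdots v_n v_1$. Setting $A = \{v_1, v_3, \ldots, v_{n-1}\}$ and $B = \{v_2, v_4, \ldots, v_n\}$, every arc of $C$ goes between the parts, so $C \subseteq T[A,B]$ and the spanning bipartite tournament $T[A,B]$ is strong.

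When $n$ is odd, any alternating $2$-colouring of a Hamiltonian cycle leaves exactly one clash arc with both endpoints in the same part. If we index a Hamiltonian cycle so that this arc is $v_n v_1$ and $v_1, v_n \in A$, then $T[A,B]$ contains the Hamiltonian path $v_1 v_2 \cdots v_n$, and $T[A,B]$ is strong precisely when there is a bipartite path from $v_n$ back to $v_1$. The simplest such path has length two, $v_n \to w \to v_1$ with $w \in B \cap N^+(v_n) \cap N^-(v_1)$. My plan is therefore to find, among the $n$ possible rotations of the alternating colouring (one clash arc per arc of the cycle) and among the Hamiltonian cycles of $T$, some configuration for which such a short-cut $w$ exists in the opposite part.

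To produce such a configuration, I would lean on Theorem \ref{FrTh} and Lemma \ref{strongT}. By the lemma, a regular tournament on $n$ vertices is $\lceil n/3 \rceil$-strong, so for $n \geq 9$ the hypothesis of Theorem \ref{FrTh} allows us to prescribe not only the clash arc but also one further arc of the Hamiltonian cycle; this is more than enough slack to plant a short-cut $w$. For non-regular $T$, or smaller odd $n$, I would instead pick a vertex $u$ with $T-u$ strong (which exists in strong tournaments of order $\geq 4$ by the classical consequence of Theorem \ref{thm:moon}), apply the even case to $T-u$ to obtain a bipartition $(A',B')$ of $V(T)\setminus\{u\}$ coming from a Hamiltonian cycle of $T-u$, and place $u$ on the side whose complement contains both an in- and an out-neighbour of $u$. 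If no such placement works for a given Hamiltonian cycle of $T-u$, then $\{N^+(u),N^-(u)\}=\{A',B'\}$, so the alternating cycle alternates between $N^+(u)$ and $N^-(u)$; since $|N^+(u)|, |N^-(u)| \geq 2$ for $n \geq 5$, at least one of these neighbourhoods spans an arc, and Theorem \ref{FrTh} can be used to force a Hamiltonian cycle of $T-u$ through such an arc, breaking the bad alternation.

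The principal obstacle is the small odd case $n = 5$, where Lemma \ref{strongT} gives nothing beyond $2$-strongness and the extension from $T-u$ is the tightest. There a direct case analysis of the few strong tournaments on five vertices is required to verify that $T_5$ is the unique exception; this is precisely where the hypothesis $T \neq T_5$ enters. The second delicate point is ensuring, in the extension argument, that the Hamiltonian cycle of $T-u$ chosen through an arc inside $N^+(u)$ or $N^-(u)$ still supplies the desired short-cut $w$ back to close up the bipartite path once $u$ is inserted, but this can be arranged by appealing once more to Theorem \ref{FrTh} applied inside $T-u$.
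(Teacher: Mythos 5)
Your even case is identical to the paper's. Your odd regular case is in the right spirit—the paper also combines Lemma~\ref{strongT} and Theorem~\ref{FrTh}—but the decisive detail is missing: Theorem~\ref{FrTh} lets you \emph{avoid} prescribed arcs, not force a Hamiltonian cycle \emph{through} a chosen arc. The paper makes this work by picking the vertex $v_i\in V_1^C$ with fewest out-neighbours in $V_2^C$, bounding that number by $\lfloor (k-1)/2\rfloor$ (a counting argument using $N^-(z)=V_1^C$), checking this is $<\lceil n/3\rceil -1$, and then \emph{avoiding} all arcs from $v_i$ to $V_2^C$, which forces $v_i$'s successor on the new cycle into $V_1^C$ and contradicts the ``bad alternation'' claim. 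Your sketch asserts the forcing without the counting that makes it legitimate, and in fact phrases it backwards (``prescribe'' and ``force through''), which suggests the mechanism has not been worked out.

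The more serious gap is the non-regular odd case. There $T-u$ may be merely $1$-strong, so Theorem~\ref{FrTh} yields nothing (it permits avoiding zero arcs), and Lemma~\ref{strongT} does not apply since the tournament is not regular. Your fallback of ``forcing a Hamiltonian cycle of $T-u$ through an arc inside $N^+(u)$ or $N^-(u)$ via Theorem~\ref{FrTh}'' therefore has no foundation in this case. The paper handles non-regularity by a completely different device: it chooses the vertex $z$ (your $u$) to minimize $d^0(z)$ among vertices with $T-z$ strong, shows that a vertex $v_2$ with $d^+(v_2)\neq d^-(v_2)$ exists and, together with its cycle-successor $v_3$, can be deleted so that $T''=T-\{v_2,v_3\}$ is strong on two fewer vertices (still odd), and then \emph{inducts} on $T''$, lifting a good bipartition of $T''$ by putting $v_2$ and $v_3$ on opposite sides. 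The choice of $z$ as minimizing $d^0$ is what makes the degree argument about $v_2,v_3$ go through; nothing in your proposal plays this role, and without an induction or some replacement for it, the non-regular case is not closed. The $n=5$ endgame you mention (identifying $T_5$ as the unique exception) is indeed needed, but it sits at the base of the paper's induction, whereas in your outline it is an isolated patch with no inductive structure for it to anchor.
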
 
\begin{proof}
  Let $T=(V,A)$ be  a strong tournament on $n$ vertices.  If $n$ is even, then, as we argued above, $T$ has a spanning strong bipartite tournament. Hence we may assume that $n=2k+1$ for some $k\geq 1$.
 
A bipartition $(V_1, V_2)$ of $T$ is {\bf good} if $T[V_1,V_2]$ is strong.
We shall prove by induction on $k$ that $T$ admits a good bipartition unless $T=C_3$ or $T=T_5$.
If $k=1$, then $T=C_3$ so there is nothing to prove. Hence we may assume that $k \geq 2$ and move to the induction step.

By Theorem \ref{thm:moon},  $T$ has a vertex $z$ such that $T'=T-z$ is strong. 
Choose such a vertex $z$ so that $d^0(z)=\min\{d^+(z),d^-(z)\}$ is as small as possible. 
For a hamiltonian cycle $C=v_1v_2\ldots{}v_{2k}v_1$ in $T'$ let $(V_1^C,V_2^C)$ be the 2-partition of \jbj{$V(C)$} such that
$V_1^C=\{v_1,v_3,\ldots{},v_{2k-1}\}$ and  $V_2^C=\{v_2,v_4,\ldots{},v_{2k}\}$.
If $z$ has both an in-neighbour and an out-neighbour in $V_i^C$, then $(V_i^C,V_{3-i}^C\cup\{z\})$ is a good bipartition.
Hence we can assume the following for all hamiltonian cycles, $C$ in $T'$.

\begin{claim}\label{claimA}
$N^-(z)=V_i^C$ and $N^+(z)=V_{3-i}^C$ for some $i \in [2]$.
\end{claim}

Let $C=v_1v_2\ldots{}v_{2k}v_1$ be any hamiltonian cycle of $T'$ and, by Claim~\ref{claimA}, assume without loss of generality that $N^-(z)=V_1^C$ and $N^+(z)=V_2^C$. \jbj{Note that $d^+(z)=d^-(z)=k$.}

Suppose first that $T$ is not a $k$-regular tournament. 
Then we can relabel $V\setminus \{z\}$ such that $d^-(v_2)\neq d^+(v_2)$ \jbj{(as if $d^-(v_i)= d^+(v_i)$ for all $i\in [2k]$ then $T$ is $k$-regular since we also have $d^-(z)=d^+(z)$
).} 
Observe that  $C''=v_1zv_4\ldots{}v_{2k}v_1$ is an $(n-2)$-cycle in $T$.
 If the vertex $v_3$ has both an in-neighbour and an out-neighbour on $C''$ then $T-v_2$ is strong, contradicting the choice of $z$ (as we have concluded that $d^-(z)=d^+(z)$ above).
Hence we may asssume that $v_3$ has no in-neighbour on $C''$ and again by the choice of $z$ we may assume that  $v_2$ has no out-neighbour on $C''$. Let $T''=T-\{v_2,v_3\}$ and note that $T''$ is strong since $C''$ is a hamiltonian cycle of $T''$. 
\FH{If $T''$ has a good bipartition $(X,Y)$, then $(X\cup \{v_2\}, Y\cup\{v_3\})$ is a  good bipartition of $T$.} Hence we may assume that $T''$ has no spanning strong bipartite subdigraph. Thus, by induction, $T''$ is either $C_3$ or $T_5$. If $T''=C_3$, then it is easy to see that $T=T_5$. If $T''=T_5$, then $\{v_2,b,c,e\},\{v_3,a,d\}$ is a good partition of $V(T)$.

Suppose now that $T$ is a $k$-regular tournament, that is, $d^+(v)=d^-(v)=k$ for every vertex. 
By Lemma~\ref{strongT}, $T$ is at least \jbj{$\lceil n/3 \rceil$-strong}, so $T'=T-z$ is at least \jbj{$\left(\lceil n/3 \rceil -1 \right)$-strong}.
Let $v_i \in V_1^C$ be a vertex with as few out-neighbours in $V_2^C$ as possible.
As $N^-(z) = V_1^C$, there are $k(k-1)/2$ arcs within $V_1^C$ and $k(k-1)/2$ arcs from $V_1^C$ to $V_2^C$ (as each vertex in $V_1^C$ has out-degree $k-1$ in $T'$).
Therefore $v_i$ has at most $\lfloor (k-1)/2 \rfloor$ out-neighbours in $V_2^C$.
As $k \geq 2$ the following holds,

\[
\lfloor (k-1)/2 \rfloor \leq (k-1)/2 < (2k-2)/3 \leq \lceil (2k+1)/3 \rceil -1  = \lceil n/3 \rceil -1
\]

Therefore,  by Theorem~\ref{FrTh}, there exists a hamiltonian cycle, $C^*$, in $T'$, avoiding all arcs from $v_i$ to $V^C_2$. This means that the \jbj{successor} of $v_i$ in $C^*$ is also in $V^C_1$.
However $z$ now has an in-neighbour in $V_j^{C^*}$ (namely $v_i$) and in $V_{3-j}^{C^*}$ (the \jbj{successor} of $v_i$ on $C^*$) for some $j \in [2]$,
a contradiction to Claim~\ref{claimA} above.
\end{proof}

Denote by $C_{3,1}$ the semicomplete digraph on three vertices $x,y,z$ with arcs $xy,yz,yx,zx$.
The digraphs $T_5$, $S_{5,1}$, $S_{5,2}$, and $S_{5,3}$ are the ones depicted in Figure~\ref{fig:exceptions}.

\begin{figure}[hbtp]
    \begin{center}
      \tikzstyle{vertexB}=[circle,draw, minimum size=20pt, scale=0.6, inner sep=0.5pt]
      \begin{tikzpicture}[scale=0.6]
       
     \node (a) at (90:2cm) [vertexB] {$a$};
     \node (b) at (162:2cm) [vertexB] {$b$};
     \node (c) at (18:2cm) [vertexB] {$c$};
     \node (d) at (-54:2cm) [vertexB] {$d$};
 \node (e) at (-126:2cm) [vertexB] {$e$};
        \draw[->, line width=0.03cm] (a) to (b);
        \draw[->, line width=0.03cm] (a) to (d);
        \draw[->, line width=0.03cm] (b) to (c);
        \draw[->, line width=0.03cm] (b) to (d);
        \draw[->, line width=0.03cm] (c) to (a);
        \draw[->, line width=0.03cm] (c) to (d);
        \draw[->, line width=0.03cm] (d) to (e);
        \draw[->, line width=0.03cm] (e) to (a);
        \draw[->, line width=0.03cm] (e) to (b);
        \draw[->, line width=0.03cm] (e) to (c);
        \node () at (0,-2.6) {$T_5$};
   \end{tikzpicture}
   \hfill
  \begin{tikzpicture}[scale=0.6]
  \node (a) at (90:2cm) [vertexB] {$a$};
     \node (b) at (162:2cm) [vertexB] {$b$};
     \node (c) at (18:2cm) [vertexB] {$c$};
     \node (d) at (-54:2cm) [vertexB] {$d$};
 \node (e) at (-126:2cm) [vertexB] {$e$};
        \draw[->, line width=0.03cm] (a) to (b);
        \draw[->, line width=0.03cm] (a) to (d);
        \draw[->, line width=0.03cm] (b) to (c);
        \draw[->, line width=0.03cm] (b) to (d);
        \draw[->, line width=0.03cm] (c) to (a);
        \draw[->, line width=0.03cm] (c) to (d);
        \draw[->, line width=0.03cm] (d) to (e);
        \draw[->, line width=0.03cm] (e) to (a);
        \draw[->, line width=0.03cm] (e) to (b);
        \draw[->, line width=0.03cm] (e) to (c);
        \draw[->, line width=0.03cm] (e) to [out=-30,in=210] (d);
        \node () at (0,-2.6) {$S_{5,1}$};
 \end{tikzpicture}
   \hfill
  \begin{tikzpicture}[scale=0.6]
      \node (a) at (90:2cm) [vertexB] {$a$};
     \node (b) at (162:2cm) [vertexB] {$b$};
     \node (c) at (18:2cm) [vertexB] {$c$};
     \node (d) at (-54:2cm) [vertexB] {$d$};
 \node (e) at (-126:2cm) [vertexB] {$e$};
        \draw[->, line width=0.03cm] (a) to (b);
        \draw[->, line width=0.03cm] (a) to (d);
        \draw[->, line width=0.03cm] (b) to (c);
        \draw[->, line width=0.03cm] (b) to (d);
        \draw[->, line width=0.03cm] (c) to (a);
        \draw[->, line width=0.03cm] (c) to (d);
        \draw[->, line width=0.03cm] (d) to (e);
        \draw[->, line width=0.03cm] (e) to (a);
        \draw[->, line width=0.03cm] (e) to (b);
        \draw[->, line width=0.03cm] (e) to (c);
        \draw[->, line width=0.03cm] (b) to [out=-105, in=150] (e);
        \node () at (0,-2.6) {$S_{5,2}$};
  \end{tikzpicture}
   \hfill
  \begin{tikzpicture}[scale=0.6]       
         \node (a) at (90:2cm) [vertexB] {$a$};
     \node (b) at (162:2cm) [vertexB] {$b$};
     \node (c) at (18:2cm) [vertexB] {$c$};
     \node (d) at (-54:2cm) [vertexB] {$d$};
 \node (e) at (-126:2cm) [vertexB] {$e$};
        \draw[->, line width=0.03cm] (a) to (b);
        \draw[->, line width=0.03cm] (a) to (d);
        \draw[->, line width=0.03cm] (b) to (c);
        \draw[->, line width=0.03cm] (b) to (d);
        \draw[->, line width=0.03cm] (c) to (a);
        \draw[->, line width=0.03cm] (c) to (d);
        \draw[->, line width=0.03cm] (d) to (e);
        \draw[->, line width=0.03cm] (e) to (a);
        \draw[->, line width=0.03cm] (e) to (b);
        \draw[->, line width=0.03cm] (e) to (c);
        \draw[->, line width=0.03cm] (d) to [out=30, in=-80] (c);
        \node () at (0,-2.6) {$S_{5,3}$};
  \end{tikzpicture}      
  \end{center}
    \caption{The semicomplete digraphs on 5 vertices that have no spanning strong bipartite subdigraph.}\label{fig:exceptions}
    \end{figure}
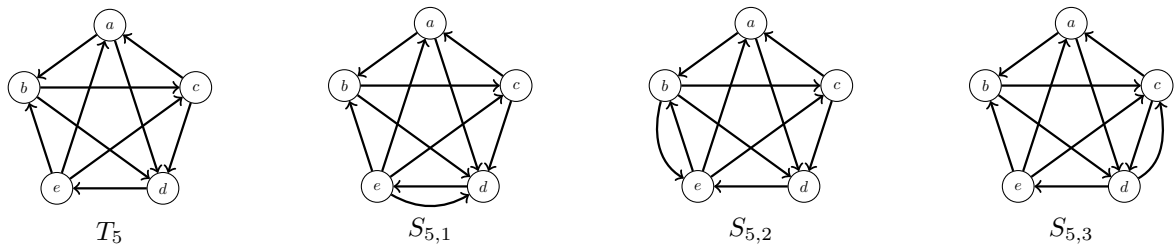

        \begin{theorem}
      \label{thm:SDstrongbip}
      Let $S$ be a strong semicomplete digraph. Then $S$ has a spanning strong semicomplete bipartite subdigraph if and only if $S$ is not isomorphic to one of $C_3,C_{3,1},T_5,S_{5,1},S_{5,2},S_{5,3}$. \jbj{In particular, every strong semicomplete digraph on at
        least 6 vertices has a spanning strong semicomplete bipartite subdigraph.}
    \end{theorem}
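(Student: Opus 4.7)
The plan is to separate the two directions. For necessity, the cases $S=C_3$ and $S=T_5$ are immediate from Theorem \ref{thm:TspBT}. For the remaining four exceptions $C_{3,1},S_{5,1},S_{5,2},S_{5,3}$, since $|V(S)|\in\{3,5\}$ there are at most $15$ bipartitions to consider (and fewer up to swapping the parts), so I would simply enumerate them and, in each case, exhibit a vertex with zero in-arc or zero out-arc in the candidate subdigraph $S[V_1,V_2]$. The $\mathbb{Z}_3$-automorphism of $T_5$ rotating $\{a,b,c\}$ and fixing $\{d,e\}$ cuts this enumeration down substantially.

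For sufficiency, let $S$ be a strong semicomplete digraph on $n$ vertices not isomorphic to any exception. I split on the parity of $n$. When $n$ is even, Camion's theorem (\ref{thm:camion}) yields a hamiltonian cycle $v_1v_2\cdots v_nv_1$, and the odd--even bipartition $V_1=\{v_1,v_3,\ldots,v_{n-1}\}$, $V_2=\{v_2,v_4,\ldots,v_n\}$ preserves this cycle; hence $S[V_1,V_2]$ is strong, and it is semicomplete bipartite because $S$ itself is semicomplete. When $n$ is odd, I first observe that every strong semicomplete digraph admits a spanning strong tournament: starting from a hamiltonian cycle $C$ given by Camion, for each digon of $S$ keep exactly one of its two arcs, preferring the one on $C$ when applicable (since a hamiltonian cycle can use at most one arc of any digon, this is always possible). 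The resulting spanning subdigraph $T$ is a tournament containing $C$ and is therefore strong. If $n\ge 7$, then $T$ has at least seven vertices, so $T\notin\{C_3,T_5\}$, and Theorem \ref{thm:TspBT} produces a spanning strong bipartite tournament of $T$, which is a spanning strong semicomplete bipartite subdigraph of $S$ (semicomplete bipartite because $T$ is a tournament).

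The remaining small orders $n=3$ and $n=5$ are handled directly. For $n=3$ one checks by hand that $C_3$ and $C_{3,1}$ are the only strong semicomplete $3$-vertex digraphs without the desired subdigraph, the other two (with two or three digons) having an obvious strong bipartition of shape $(1,2)$. The main obstacle is $n=5$ when $S$ has at least one digon: if $S$ is itself a tournament, Theorem \ref{thm:TspBT} applies directly since $S\ne T_5$ by hypothesis. Otherwise, the strategy is to find a spanning strong tournament $T$ of $S$ with $T\not\cong T_5$ and then invoke Theorem \ref{thm:TspBT}. The spanning tournament built from a hamiltonian cycle as above might be isomorphic to $T_5$, but then I would use the freedom of orienting the non-cycle digons: identifying the vertices of $T$ with those of $T_5$, the digons of $S$ correspond to reverse arcs of $T_5$, which under the $\mathbb{Z}_3$-symmetry fall into four orbits $\{ab,bc,ca\}$, $\{ad,bd,cd\}$, $\{ea,eb,ec\}$, $\{de\}$; the three listed exceptions $S_{5,3},S_{5,2},S_{5,1}$ are precisely $T_5$ plus one reverse arc from the last three orbits. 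A case distinction along these orbits shows that for $S\notin\{S_{5,1},S_{5,2},S_{5,3}\}$ one can always flip some digon of $S$ to produce a strong spanning tournament distinguishable from $T_5$ by a simple invariant (a different out-degree sequence in the first orbit, or a different local configuration near the extremal-out-degree vertices in the others), so that Theorem \ref{thm:TspBT} applies and delivers the desired bipartite subdigraph.
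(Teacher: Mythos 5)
Your sufficiency skeleton for $n\neq 5$ is essentially the paper's: take a hamiltonian cycle (Theorem~\ref{thm:camion}), delete one arc of each $2$-cycle while keeping the cycle to obtain a spanning strong tournament $T$, and invoke Theorem~\ref{thm:TspBT}; the alternating bipartition for even $n$ is also how the paper disposes of that case. The genuine gap is the residual case $n=5$ with $T\cong T_5$. The paper does not try to escape $T_5$ by re-orienting digons; it runs a direct case analysis on the set of $2$-cycles of $S$ and in each case exhibits a bipartition of $S$ itself whose crossing digraph is strong (such a bipartition may place both arcs of a digon across the cut, something no spanning-tournament argument can exploit). Your substitute for this analysis --- ``one can always flip some digon to obtain a strong spanning tournament not isomorphic to $T_5$ unless $S\in\{S_{5,1},S_{5,2},S_{5,3}\}$'' --- is only asserted (``a case distinction shows''), and as stated it cannot be carried out consistently with your necessity part.

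Concretely, take $S_{5,3}=T_5+dc$ as in Figure~\ref{fig:exceptions}. Replacing $cd$ by $dc$ gives a tournament containing the hamiltonian cycle $cabdec$, hence strong, and it is not isomorphic to $T_5$: its unique vertex of out-degree $1$ is $c$, whose single out-neighbour $a$ has out-degree $2$, whereas in $T_5$ the out-degree-$1$ vertex dominates the out-degree-$3$ vertex. So your flipping device does apply to $S_{5,3}$ (and, via the converse symmetry of $T_5$, to $S_{5,2}$), and Theorem~\ref{thm:TspBT} would then produce a spanning strong bipartite subdigraph of these two claimed exceptions --- flatly contradicting the necessity direction you propose to settle by enumeration. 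Indeed, the enumeration you describe cannot terminate as announced: for the digraphs drawn in Figure~\ref{fig:exceptions}, the bipartition $(\{a,d\},\{b,c,e\})$ of $S_{5,3}$ has crossing arcs $ab,bd,dc,ca,de,cd,ea$ (strong via the cycle $abdca$ together with $de,ea$), and $(\{a,e\},\{b,c,d\})$ of $S_{5,2}$ has crossing arcs $ab,be,ec,ca,ad,de,eb$ (strong via $abeca$ together with $ad,de$); no vertex of zero in- or out-degree appears. Thus at least one half of your argument must fail, the advertised orbit-by-orbit case distinction is missing, and the delicate five-vertex analysis --- precisely where the paper's proof does its real work, and a point at which the list of exceptions itself needs to be re-examined --- is not supplied. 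As it stands the proposal is not a proof.
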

    \begin{proof}
      We already know that $C_3$ and $T_5$ have no spanning strong bipartite subdigraph and it is easy to check that neither does any of $C_{3,1},S_{5,1},S_{5,2},S_{5,3}$. Hence we may assume that $S$ is
      not isomorphic to one of $C_3,C_{3,1},T_5,S_{5,1},S_{5,2},S_{5,3}$.
      If $S$ has just two vertices, then it is a 2-cycle which is a strong semicomplete bipartite digraph, so we may assume that $|S|>2$.  By Theorem \ref{thm:camion}, $S$ has a hamiltonian cycle $C$. Now we obtain a spanning strong tournament $T$ of $S$ by 
      deleting  one arc of every 2-cycle while maintaining $C$ as a subdigraph. Hence it follows from Theorem \ref{thm:TspBT} that $S$ has the desired spanning strong semicomplete bipartite subdigraph unless $T$ is one of the tournaments $C_3$ or $T_5$. 
      
      Suppose first that $T=C_3$. Then, as $S$ is not one of 
      $C_3$ or $C_{3,1}$, it has at least two 2-cycles and thus clearly has a spanning strong semicomplete bipartite subdigraph. Hence we may assume that $T=T_5$. 
      
      If $S\langle \{a,b,c\}\rangle$ has a 2-cycle, then we may assume w.l.o.g. that $aba$ is a 2-cycle and now we see that $S[\{a,e\},\{b,c,d\}]$  is strong. Hence we may assume that there are no 2-cycles in  $S\langle \{a,b,c\}\rangle$. Suppose that $ded$ is a 2-cycle. Then, as $S$ is not isomorphic to $S_{5,1}$, it has another 2-cycle with one vertex in $\{d,e\}$. By reversing all arcs and renaming $a,b,c$, if necessary, we can assume that $cec$ is a 2-cycle and now $S[\{a,c,d\},\{b,e\}]$ is strong. Hence we may assume that there is no 2-cycle on $\{d,e\}$. Since $S$ is not isomorphic to one of $S_{5,2},S_{5,3}$, it must have at least two 2-cycles with one vertex in $\{d,e\}$ and the other in $\{a,b,c\}$ (by the assumptions made so far). If $S$ contains two disjoint 2-cycles, each with one vertex in $\{d,e\}$, then we may assume that $aea$ and $cdc$ are 2-cycles and thus $S[\{c,e\},\{a,b,d\}]$ is strong. If $S$ has two 2-cycles both containing $e$ and with the other end-vertices in $\{a,b,c\}$, then we may assume that these are  $a,c$ and now $S[\{b,e\},\{a,c,d\}]$ is strong. Finally, if $S$ has two 2-cycles both containing $d$ and with the other end-vertices in $\{a,b,c\}$, then we may assume that $ada$ and $cdc$ are 2-cycles and thus $S[\{b,d\},\{a,c,e\}]$ is strong.
\end{proof}

\vspace{2mm}

Kim et al. \cite{kimSJDM30} proved that for every natural number $k$ there exists a function $f(k)$ such that every
\jbj{$f(k)$-strong} tournament has a bipartition $(V_1,V_2)$ such that each of the digraphs $D\langle V_1\rangle$, $D\langle V_2\rangle$, $D[V_1,V_2]$ are $k$-connected.\\

The arc-connectivity analogue does not hold even for $k=1$.

\begin{proposition}\label{prop:tour}
 There exists no natural number $K$  such that every $K$-arc-connected tournament admits a bipartition $(V_1,V_2)$ such that each of the digraphs $D\langle V_1\rangle,D\langle V_2\rangle,D[V_1,V_2]$ are strong.
\end{proposition}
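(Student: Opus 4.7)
Proof plan.

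The plan is to exhibit, for every positive integer $K$, a $K$-arc-connected tournament $T_K$ that admits no bipartition $(V_1,V_2)$ for which all three of $D\langle V_1\rangle$, $D\langle V_2\rangle$ and $D[V_1,V_2]$ are strong. The small cases will be handled by direct counting arguments on the sizes of the two sides of a bipartition, and the construction for larger $K$ will use a distinguished ``linking'' vertex together with a near-transitive arc pattern between two large regular subtournaments, with back-arcs tuned to guarantee the arc-connectivity while preserving the obstruction.

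For $K=1$ I take $T_1=C_3$. Any non-trivial bipartition of $V(C_3)$ has a part of size one, and then $D[V_1,V_2]$ is a bipartite tournament with a singleton side $\{v\}$; any path between two vertices of the other part has to use $v$ as an intermediate vertex, which would require both an in-arc and an out-arc between $v$ and the \emph{same} vertex on the other side, impossible in a tournament. For $K=2$ I take $T_2$ to be the regular tournament on five vertices (arc-connectivity exactly $2$). Every non-trivial bipartition has a part of size at most two; the induced subtournament on two vertices consists of a single arc and so is not strong, and the singleton case fails by the argument above. This handles the first two values of $K$ with essentially no work.

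For $K\ge 3$ the idea is to produce $T_K$ on a vertex set $\{v\}\cup A\cup B$ where $A$ and $B$ are regular tournaments of order roughly $K$, with $v\to A$, $B\to v$, and arcs between $A$ and $B$ that are mostly $A\to B$ except for a carefully chosen set of back-arcs from $B$ to $A$ designed to ensure that every vertex has in- and out-degree at least $K$ and every non-trivial cut has at least $K$ arcs. The analysis of a hypothetical good bipartition $(V_1,V_2)$ proceeds by cases on the side containing $v$: for $D\langle V_1\rangle$ to be strong, $V_1$ must meet both $A$ and $B$ (so that $v$ has an in- and an out-neighbour in $V_1$), which forces $V_2\subseteq A\cup B$; if $V_2$ is contained entirely in $A$ or entirely in $B$, then $v$ has no in- or no out-neighbour in $V_2$ and $D[V_1,V_2]$ fails to be strong, while if $V_2$ meets both $A$ and $B$, the remaining near-transitive $A\to B$ pattern together with the sparse back-arcs prevents $D\langle V_2\rangle$ from admitting a Hamilton cycle (which is necessary for strongness by Camion's theorem, Theorem~\ref{thm:camion}).

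The main obstacle lies in choosing the back-arc pattern so that the two demands on it are compatible: it must be dense enough to lift every vertex's in-degree and out-degree to at least $K$ and to raise every cut to size at least $K$, yet sparse and ``misaligned'' enough that no subtournament of $T_K$ spanning both $A$ and $B$ is strong after removing any side of a bipartition. Verifying these two properties simultaneously, together with the exhaustive case analysis over all placements of $v$ and all intersection patterns of $V_1,V_2$ with $A$ and $B$, is the technical heart of the argument; a naive back-arc pattern (for instance, a perfect matching) tends to accidentally create a strong triangle $\{v,a_i,b_j\}$ together with a closing back-arc in the complement, and the pattern has to be tuned (e.g.\ via an inflation or a composition step using a smaller obstruction) to rule out all such bipartitions at once.
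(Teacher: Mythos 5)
Your proof for $K\geq 3$ is never actually completed: you describe a construction with an unspecified ``carefully chosen set of back-arcs from $B$ to $A$'' and then explicitly flag that you do not know how to choose this pattern so that it is simultaneously dense enough for $K$-arc-connectivity and sparse enough to prevent $D\langle V_2\rangle$ from becoming strong. That is the substance of the claim, and it is left open, so what you have is a plan with a hole rather than a proof.

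The paper's construction shows the difficulty is self-inflicted: you do not need any back-arcs at all. Take arbitrary $K$-arc-connected tournaments $T$ and $T'$ (which necessarily have at least $2K+1$ vertices each), orient every arc between them from $V(T)$ to $V(T')$, and add one new vertex $x$ with all arcs $V(T')\to x\to V(T)$. Arc-connectivity $\geq K$ is then automatic: if $S$ cuts $V(T)$ (or $V(T')$) non-trivially, the internal arcs of that $K$-arc-connected block already give $d^+(S)\geq K$, and the only remaining out-cuts, which separate whole blocks and $x$, all have size at least $\min(|V(T)|,|V(T')|)>K$. The non-existence of a good bipartition is now immediate and requires neither Camion's theorem nor any Hamiltonicity reasoning: with $x\in V_1$ w.l.o.g., if $V_2$ meets both $V(T)$ and $V(T')$ then $D\langle V_2\rangle$ has no arc from $V(T')$ to $V(T)$ and is not strong; if $V_2\subseteq V(T)$ then every vertex of $V(T')\subseteq V_1$ has out-degree $0$ in $D[V_1,V_2]$; if $V_2\subseteq V(T')$ then $x$ has out-degree $0$ in $D[V_1,V_2]$. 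Your intuition that back-arcs are needed to lift degrees and cut sizes is mistaken: the internal arcs of $T$ and $T'$ already supply them, and the back-arcs are exactly what create the tension you cannot resolve. Drop them, allow $T,T'$ to be any $K$-arc-connected tournaments rather than regular ones of order ``roughly $K$'' (a regular tournament of that order only has arc-connectivity about $K/2$), and your own outline becomes the paper's one-paragraph proof.
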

\begin{proof}
  Let $K$ be given and let $T$ and $T'$ be two $K$-arc strong tournaments. Form the tournament $D$ by adding all arcs from $V(T)$ to $V(T')$, all arcs from $V(T')$ to $x$ and all arcs from $x$ to $V(T)$, where $x$ is a new vertex.
  Then $D$ is $K$-arc-connected but there is no bipartition $(V_1,V_2)$ such that
  each of the digraphs $T\langle V_i\rangle$ are strong and $\delta^0{}(T[V_1,V_2])\geq 1$:
  Suppose that $(V_1,V_2)$ is such a bipartition.  W.l.o.g. $x\in V_1$.  We cannot  have that $V_2\cap V(T),V_2\cap V(T')$ are both non-empty, since then $D\langle V_2\rangle$ would not be strong. Suppose $V(T')\subset V_1$. Then there is no arc from $V(T')$ to $V_2$. Similarly we get a contradiction if  $V(T')\subset V_2$.
  
  \end{proof}

It was shown in \cite{bangDAM146} that the existence of a bipartition $(V_1,V_2)$ such that each of the digraphs $D\langle V_1\rangle,D\langle V_2\rangle$ are strong can be checked in polynomial time \jbj{when the input is a semicomplete digraph}.

\begin{question}
  What is the complexity of deciding whether a given semicomplete digraph $D$ has a bipartition $(V_1,V_2)$ such that each of the digraphs $D\langle V_1\rangle,D\langle V_2\rangle ,D[V_1,V_2]$ is strong?
\end{question}

\section{Highly arc-connected subdigraphs of low chromatic number}\label{sec:higharccon}

In this section, digraphs may have multiple arcs.
We shall use the following three results due to Mader.
\begin{theorem}[Mader~\cite{maderEJC3}]
  \label{thm:madersplit}
\FH{Let $D=(V,A\cup F)$ be a digraph, where $F$ is the set of arcs incident to a special vertex $s$.  Suppose  $d^-(s)=d^+(s)$ and $\lambda(D)\geq k$.} Then there exists a pairing $(u_1s,sv_1),\ldots{},(u_{d^-(s)},sv_{d^-(s)})$ of the arcs incident to $s$ such that the digraph $D^*$ that we obtain by deleting $s$ and all its incident arcs and adding the arcs $u_1v_1,\ldots{},u_{d^-(s)}v_{d^-(s)}$ is $k$-arc-connected.
  \end{theorem}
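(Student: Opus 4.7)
The plan is to reduce the theorem to a local admissibility statement and then iterate. Given an in-arc $us$ and an out-arc $sv$, to \emph{split off} the pair $(us,sv)$ at $s$ means to delete both arcs and add the arc $uv$ in their place (parallel arcs are permitted). A direct count shows that for any $X\subseteq V\setminus\{s\}$, this operation decreases both $d^+(X)$ and $d^-(X)$ by exactly one when $\{u,v\}\subseteq X$, and leaves them unchanged in all other cases. Call a pair $(us,sv)$ \emph{admissible} if no set $X\subseteq V\setminus\{s\}$ containing both $u$ and $v$ satisfies $d^+(X)=k$ or $d^-(X)=k$. Then splitting off an admissible pair produces a digraph in which all cuts of $V\setminus\{s\}$ still have size at least $k$, and in which $s$ still has equal in- and out-degree (each one less than before).

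Given this characterization, I would prove the theorem by induction on $d^-(s)$. If $d^-(s)=0$ then $s$ is isolated, and deleting it gives the required $D^*$ with $\lambda(D^*)\geq k$. Otherwise the admissibility lemma produces an admissible pair $(u_1s,sv_1)$; splitting it off yields a smaller instance satisfying the induction hypothesis, and the pairings obtained inductively, together with $(u_1s,sv_1)$, assemble into the desired pairing for $D$.

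The crux is therefore the following admissibility lemma: provided $d^+(s)=d^-(s)\geq 1$ and $\lambda(D)\geq k$, an admissible pair always exists. I would argue by contradiction. Call a set $X\subseteq V\setminus\{s\}$ \emph{in-tight} if $d^-(X)=k$ and \emph{out-tight} if $d^+(X)=k$. If no admissible pair exists, then for every in-neighbour $u$ and every out-neighbour $v$ of $s$, some tight set contains both. The main tool is the submodularity of the cut functions,
\[
d^+(X)+d^+(Y)\;\geq\;d^+(X\cap Y)+d^+(X\cup Y),
\]
and the analogous inequality for $d^-$. Since $\lambda(D)\geq k$, equality in the submodular inequality forces the families of in-tight and out-tight sets to be closed under intersection and union whenever this keeps $s$ outside. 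This lets one pick, for each in-neighbour $u$ of $s$, a maximal out-tight set $U_u$ containing $u$, and dually a maximal in-tight set $V_v$ for each out-neighbour $v$. A careful tally of the arcs incident to $s$ that are forced to enter or leave these maximal tight sets, using the hypothesis $d^+(s)=d^-(s)$ to balance the two sides of the count, then contradicts $\lambda(D)\geq k$.

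The main obstacle is precisely this admissibility lemma. The subtlety is that a pair can be blocked by either an out-tight or an in-tight set, so both families must be tracked simultaneously, and the parity assumption $d^+(s)=d^-(s)$ is needed to close the argument; without it one would only obtain a weaker splitting. Once the admissibility lemma is in hand, the rest of the proof is a routine induction.
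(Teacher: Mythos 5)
The paper does not prove this statement: it is quoted as Mader's theorem from \cite{maderEJC3} and used as a black box, so there is no in-paper proof to compare against. Judged on its own, your outline follows the classical splitting-off route (induction on $d^-(s)$ together with an admissibility lemma), which is indeed how Mader, and later Frank, prove it; your accounting of how one split changes $d^+(X)$ and $d^-(X)$ is correct, and the invariant you propose to maintain (every $X$ with $\emptyset\neq X\subsetneq V\setminus\{s\}$ keeps $d^+(X)\geq k$ and $d^-(X)\geq k$) is the right one.

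There is, however, a genuine gap, in two places. First, the hypothesis $\lambda(D)\geq k$ is not preserved by a split: after the first split the trivial cuts at $s$ have size $d^+(s)-1$, which may drop below $k$, so the statement you induct on must be reformulated with the local hypothesis $\lambda_D(x,y)\geq k$ for all $x,y\in V\setminus\{s\}$ (equivalently, exactly your invariant). This is only a restatement issue, but as written "yields a smaller instance satisfying the induction hypothesis" is not literally true. Second, and more seriously, the admissibility lemma is the entire content of the theorem, and your sketch of it is not a proof. Submodularity does show that two \emph{intersecting} out-tight sets (or two intersecting in-tight sets) have tight intersection and union, but when an in-tight set crosses an out-tight set the inequality you quote gives nothing; the mixed case needs the companion inequality $d^-(X)+d^+(Y)\geq d^-(X\setminus Y)+d^+(Y\setminus X)$, followed by an analysis showing that few maximal tight ("dangerous") sets suffice to cover the arcs at $s$, before the final count --- which is where $d^+(s)=d^-(s)$ and $\lambda\geq k$ enter --- yields a contradiction. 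Your "careful tally of the arcs incident to $s$" is precisely this argument, and nothing in the proposal shows that the tally closes; as it stands, the proposal reduces the theorem to its hardest step and then asserts that step. To complete it, either carry out the maximal-tight-set analysis in full or cite Mader's (or Frank's) proof of the admissibility lemma explicitly.
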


  A digraph $D=(V,A)$ is {\bf minimally $\mathbf{k}$-arc-connected} if $\lambda{}(D)=k$ but $\lambda{}(D\setminus a)=k-1$ for every arc $a\in A$.

  \begin{theorem}[Mader~\cite{maderAM25}]
    \label{thm:minkas}
    Every minimally $k$-arc-connected digraph \jbj{has} a vertex $s$ with $d^-(s)=d^+(s)=k$.
  \end{theorem}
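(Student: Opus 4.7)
The plan is to induct on $|V(D)|$, argue by contradiction, and reduce to a smaller instance via Theorem~\ref{thm:madersplit}. Suppose $D=(V,A)$ is minimally $k$-arc-connected but no vertex satisfies $d^+(v)=d^-(v)=k$. Since $\lambda(D)=k$ forces $d^+(v),d^-(v)\geq k$ at every vertex, the failure hypothesis means every vertex satisfies $\max\{d^+(v),d^-(v)\}\geq k+1$. I would first record the structural consequences of minimality: for every arc $uv\in A$ there is a set $X\subsetneq V$ with $u\in X$, $v\notin X$, and $d^+_D(X)=k$, because $\lambda(D-uv)=k-1$. So every arc of $D$ lies in the out-boundary of some tight out-$k$-cut, and dually every arc lies in the in-boundary of some tight in-$k$-cut. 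By the standard uncrossing argument based on submodularity of $d^+$, the inclusion-minimal tight out-cuts (out-atoms) are pairwise disjoint, and likewise the in-atoms.

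Next I would produce a vertex $s$ with $d^+(s)=d^-(s)>k$ at which to split. Exploiting the out-atom structure together with the global balance $\sum_v d^+(v)=\sum_v d^-(v)$ and the failure hypothesis, one argues that at least one vertex has equal in- and out-degree strictly exceeding $k$: otherwise the imbalances $d^+(v)-d^-(v)$ would have a definite sign at every vertex, and summing these signed imbalances over an out-atom $A$ (using $d^+(A)=k$ and $d^-(A)\geq k$) yields a contradiction. At this vertex $s$, apply Theorem~\ref{thm:madersplit} to split off all arcs at $s$, producing a $k$-arc-connected digraph $D^\ast$ on $V\setminus\{s\}$. Since the splitting pairs each $ws$ with some $sw'$ and replaces them by $ww'$, we have $d^\pm_{D^\ast}(w)=d^\pm_D(w)$ for every $w\neq s$. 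Let $D'$ be a minimally $k$-arc-connected spanning subdigraph of $D^\ast$; by the induction hypothesis there is a vertex $t\in V(D')$ with $d^+_{D'}(t)=d^-_{D'}(t)=k$, and since $D'\subseteq D^\ast$ we obtain $d^\pm_D(t)=d^\pm_{D^\ast}(t)\geq k$.

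The main obstacle is closing the loop: I must upgrade $d^\pm_D(t)\geq k$ to equalities to contradict the failure hypothesis at $t$. The idea is that any arc at $t$ present in $D^\ast$ but absent from $D'$ is redundant for $k$-arc-connectivity of $D^\ast$; tracing such a redundancy through the pairing of Theorem~\ref{thm:madersplit} back to $D$ yields an arc of $D$ whose deletion preserves $\lambda\geq k$, contradicting minimality of $D$. Making this compatibility between the splitting pairing and the minimality of $D$ precise, and in particular choosing the pairing so that no arc of $D'$ at $t$ arises from a pair that hides a deletion permissible in $D$, is the technical heart of the argument and is where the counting from the atom analysis is leveraged once more.
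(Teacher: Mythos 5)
This is a theorem that the paper cites from Mader~\cite{maderAM25} without proof, so there is no ``paper's proof'' to compare against. Evaluating the proposal on its own merits, it contains two genuine gaps, one of which rests on a miscalculation.

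The first and most serious gap is the production of the splitting vertex $s$ with $d^+(s)=d^-(s)>k$. You assert that if every vertex had $d^+(v)\ne d^-(v)$, then summing the imbalances over an out-atom $A$ ``yields a contradiction.'' It does not. For any set $A$,
\[
\sum_{v\in A}\bigl(d^+(v)-d^-(v)\bigr)=d^+(A)-d^-(A),
\]
and for an out-atom with $d^+(A)=k$ and $d^-(A)\ge k$ this is simply $\le 0$. That tells you some vertex of $A$ has $d^+(v)\le d^-(v)$; it is nowhere near a contradiction with ``every vertex has a strict sign.'' The dual computation over an in-atom is equally toothless. Establishing that a minimally $k$-arc-connected digraph has any vertex with $d^+(v)=d^-(v)$ is essentially of the same depth as the theorem itself, so this cannot be waved through.

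The second gap is the closing of the induction, which you flag as ``the technical heart'' but whose outline as given is based on a misconception. An arc of $D^*$ at $t$ that is absent from $D'$ is either (i) a new split arc $ww'$, or (ii) an original arc of $D$. In case (i), the fact that $D^*\setminus ww'$ remains $k$-arc-connected says nothing about removing $ws$ or $sw'$ from $D$: Mader's splitting theorem certifies $k$-arc-connectivity only after the \emph{entire} pairing is carried out, not pair by pair, and by minimality of $D$ both $ws$ and $sw'$ are in fact critical in $D$. In case (ii), redundancy of the arc in $D^*$ does not transfer to $D$ either, since $D^*$ and $D$ differ on the whole neighbourhood of $s$. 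So the chain $d^{\pm}_{D'}(t)=k\Rightarrow d^{\pm}_D(t)=k$ does not follow, and the inductive step collapses. Indeed, $d^{\pm}_D(t)\ge k$ was already known without any of this machinery, so the induction as set up produces no new information at $t$.

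As a side note, using the splitting theorem here is anachronistic with respect to the cited source: Mader proved the degree theorem in 1974 and the splitting theorem only in 1982, and his original argument is a direct counting argument on tight sets rather than an induction via splitting. A splitting-based proof may well exist, but both of the missing steps above would have to be filled in with substantially different ideas than the ones sketched.
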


  
  
\FH{
Let $D$ be a digraph, and $s$ a new vertex (i.e. not in $V(D)$).
By \jbj{{\bf lifting} the arcs  $u_1v_1, \dots , u_kv_k$ to $s$} , we mean  adding $s$ and replacing these arcs by the $2k$ arcs $u_1s, \dots, u_ks, sv_1,\dots, sv_k$.

   \begin{lemma}[Mader~\cite{maderEJC3}]
    \label{lem:kbothways}
     Let $D$ be a $k$-arc-connected digraph, let $s$ be a new vertex,  let $u_1v_1, \dots , u_kv_k$ be $k$ distinct arcs, and let $D'$ be the digraph obtained from $D$ by lifting these arcs to $s$.  
     
     If $D$ is $k$-arc-connected, then $D'$ is also $k$-arc-connected.
     \end{lemma}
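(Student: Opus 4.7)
The plan is to check the cut characterisation of arc-connectivity directly: for every nonempty proper $X\subsetneq V(D')$, verify that $d^+_{D'}(X)\geq k$. The proof splits on whether the new vertex $s$ lies in $X$, and the main content is a bookkeeping identity showing that lifting never decreases the out-degree of any cut.

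For $s\notin X$, we have $X\subseteq V(D)$. An arc $u_iv_i$ contributes to $d^+_D(X)$ iff $u_i\in X$ and $v_i\notin X$; after lifting, the replacement arcs $u_is, sv_i$ contribute to $d^+_{D'}(X)$ iff $u_i\in X$ (via $u_is$), with no contribution from $sv_i$ since $s\notin X$. Summing over $i$ gives
\[ d^+_{D'}(X) \;=\; d^+_D(X) \;+\; \big|\{i : u_i\in X,\, v_i\in X\}\big| \;\geq\; d^+_D(X). \]
When $X$ is a nonempty proper subset of $V(D)$ the right-hand side is $\geq k$ by hypothesis; the boundary case $X=V(D)$ gives $d^+_{D'}(X)=k$ directly (via the $k$ arcs $u_is$).

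The case $s\in X$ is symmetric: put $X'=X\setminus\{s\}$. If $X'=\emptyset$ then $X=\{s\}$, and $d^+_{D'}(\{s\})=k$ by construction. Otherwise $X=V(D')$ would be excluded, so $X'$ is a nonempty proper subset of $V(D)$, and the analogous tally — now the arc $sv_i$ leaves $X$ iff $v_i\notin X'$, while $u_is$ stays inside $X$ — yields
\[ d^+_{D'}(X) \;=\; d^+_D(X') \;+\; \big|\{i : u_i\notin X',\, v_i\notin X'\}\big| \;\geq\; d^+_D(X') \;\geq\; k. \]
There is no real obstacle beyond keeping the case analysis straight; the only subtle point is that the arcs $u_iv_i$ are merely distinct \emph{as arcs}, so the summation is over the $k$ arc indices rather than over (possibly repeating) endpoints.
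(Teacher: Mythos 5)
Your proof is correct. Note that the paper does not prove this lemma at all: it is quoted as a result of Mader~\cite{maderEJC3}, so there is no in-paper argument to compare against. Your direct verification via the cut characterization of arc-connectivity (every nonempty proper $X\subsetneq V(D')$ has $d^+_{D'}(X)\geq k$, which suffices by Menger's theorem, consistent with the paper's definition of $\lambda$) is a complete, self-contained justification. The two bookkeeping identities are right: for $s\notin X$ each lifted arc with $u_i\in X$, $v_i\notin X$ trades its contribution for that of $u_is$, while arcs with both ends in $X$ only add, giving $d^+_{D'}(X)=d^+_D(X)+\lvert\{i: u_i\in X,\ v_i\in X\}\rvert$; dually for $s\in X$ with $X'=X\setminus\{s\}$ one gets $d^+_{D'}(X)=d^+_D(X')+\lvert\{i: u_i\notin X',\ v_i\notin X'\}\rvert$. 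You also correctly isolate the two boundary cuts $X=V(D)$ and $X=\{s\}$, each of out-degree exactly $k$ by construction, and your remark that the count runs over the $k$ arc indices (so possible coincidences of endpoints, i.e.\ parallel arcs at $s$, are harmless) is the right way to handle the multigraph setting of this section.
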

}

    We can now prove the following generalization of Proposition \ref{prop:3col-sub}. The reader is encouraged to compare this result with  Corollary \ref{cor:k+1ec}.

\begin{theorem}\label{thm:arc-con-bip}
  Every $k$-arc-connected digraph has a spanning $k$-arc-connected $(2k+1)$-partite subdigraph. This is best possible, as there are infinitely many $k$-arc-connected digraphs with no spanning $k$-arc-connected $2k$-partite subdigraph.
\end{theorem}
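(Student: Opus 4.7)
The plan is to prove the upper bound by induction on $|V(D)|$ after reducing to the minimally $k$-arc-connected case. For the reduction, take any spanning minimally $k$-arc-connected subdigraph $D_0$ of $D$: if a partition $(V_1,\ldots,V_{2k+1})$ witnesses the theorem for $D_0$, then the spanning $(2k+1)$-partite subdigraph $D[V_1,\ldots,V_{2k+1}]$ contains the corresponding $H_0\subseteq D_0$ and is therefore $k$-arc-connected. Thus I may assume that $D$ itself is minimally $k$-arc-connected, and handle the base case $|V(D)|\leq 2k+1$ by partitioning the vertex set into singletons so that $H=D$ works trivially. In the inductive step, Theorem~\ref{thm:minkas} yields a vertex $s$ with $d^+(s)=d^-(s)=k$, and a complete splitting at $s$ via Theorem~\ref{thm:madersplit} produces a pairing $(u_1 s, s v_1),\ldots,(u_k s, s v_k)$ of the arcs at $s$ together with a digraph $D^* := (D-s)\cup\{u_iv_i : i\in [k]\}$ which is $k$-arc-connected on $|V(D)|-1$ vertices. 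By induction $D^*$ admits a spanning $k$-arc-connected $(2k+1)$-partite subdigraph $H^*$ with partition $(V_1^*,\ldots,V_{2k+1}^*)$. Since $s$ has at most $d^+(s)+d^-(s)=2k$ distinct neighbours in $D$ but $2k+1$ classes are available, I can place $s$ into some class $V_c^*$ that avoids all neighbours of $s$; let $(V_1,\ldots,V_{2k+1})$ denote this extended partition and set $H:=D[V_1,\ldots,V_{2k+1}]$.

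The main obstacle is to verify $\lambda(H)\geq k$, which I would do by a case analysis on the location of $s$ in a cut $(X,V\setminus X)$. The trivial cuts $X=\{s\}$ and $X=V\setminus\{s\}$ contribute exactly $k$ arcs each, because every arc incident to $s$ survives in $H$ by the choice of $V_c^*$. For a non-trivial cut with $s\in X$ and $X':=X\setminus\{s\}$, the arcs of $H$ from $X$ to $V\setminus X$ split into the $|\{i: v_i\in V\setminus X\}|$ out-arcs $sv_i$ of $s$ (all present) plus the bichromatic arcs of $H^*$ from $X'$ to $V\setminus X$ less the bichromatic lifted arcs $u_iv_i$ with $u_i\in X'$ and $v_i\in V\setminus X$ (which were counted in $H^*$ but are not present in $D$). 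Since $\{i : u_i\in X',\, v_i\in V\setminus X\}\subseteq\{i : v_i\in V\setminus X\}$, the net loss is non-positive and $d^+_H(X)\geq d^+_{H^*}(X')\geq k$. The case $s\in V\setminus X$ is symmetric, with the analogous computation carried out on in-arcs of $s$, completing the induction.

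For the tightness part, the regular tournament $R_k$ on $2k+1$ vertices is $k$-arc-connected with every vertex of in- and out-degree exactly $k$, so no arc can be removed from $R_k$ without destroying $k$-arc-connectivity; hence the only spanning $k$-arc-connected subdigraph of $R_k$ is $R_k$ itself, whose underlying graph is $K_{2k+1}$ and which therefore has chromatic number $2k+1$. To supply infinitely many examples, I would take $m$ disjoint copies $R_k^1,\ldots,R_k^m$ of $R_k$ and link them cyclically by adding, for each $i=1,\ldots,m$ modulo $m$, $k$ parallel arcs from a vertex of $R_k^i$ to a vertex of $R_k^{i+1}$. A routine cut analysis shows that the resulting digraph $D_m$ is $k$-arc-connected, and since every copy $R_k^i$ must remain intact in any spanning $k$-arc-connected subdigraph (its internal in- and out-degrees are already tight), every such subdigraph contains $K_{2k+1}$ in its underlying graph and hence has chromatic number at least $2k+1$.
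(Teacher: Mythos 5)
Your induction for the upper bound is sound and follows the paper's route almost exactly: reduce to minimally $k$-arc-connected, find a vertex $s$ of in- and out-degree $k$ via Mader's degree theorem, split off at $s$ via Mader's splitting theorem, induct on $D^*$, and reinsert $s$ into a colour class that avoids its at most $2k$ neighbours. Where you diverge is the last step: the paper cites Mader's lifting lemma (Lemma~\ref{lem:kbothways}) to conclude that reinserting $s$ preserves $k$-arc-connectivity, whereas you rederive this by an explicit comparison of cuts in $H$ and $H^*$. Your counting is correct (the key inequality $|\{i : u_i\in X',\, v_i\in V\setminus X\}|\leq |\{i : v_i\in V\setminus X\}|$ does the work), so this is a valid, slightly more self-contained variant of the same argument.

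The tightness construction, however, has a genuine gap. You link the copies $R_k^1,\dots,R_k^m$ by $k$ parallel arcs from ``a vertex of $R_k^i$'' to ``a vertex of $R_k^{i+1}$'' without requiring that, within each copy $R_k^i$, the vertex receiving arcs from $R_k^{i-1}$ and the vertex sending arcs to $R_k^{i+1}$ coincide. If they are distinct vertices $b_i\neq a_i$ and the arc $a_ib_i$ happens to lie inside $R_k^i$, then $a_i$ has out-degree $2k$ and $b_i$ has in-degree $2k$ in $D_m$, so $a_ib_i$ is \emph{not} forced to remain in a spanning $k$-arc-connected subdigraph, contradicting your claim that ``every copy $R_k^i$ must remain intact.'' Concretely, for $k=1$ and $m=2$, take $R_1^i$ to be the $3$-cycle $v_0^iv_1^iv_2^iv_0^i$, $a_i=v_0^i$, $b_i=v_1^i$, and add the arcs $v_0^1v_1^2$ and $v_0^2v_1^1$. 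Deleting $v_0^1v_1^1$ and $v_0^2v_1^2$ leaves the spanning $6$-cycle $v_0^1v_1^2v_2^2v_0^2v_1^1v_2^1v_0^1$, which is strong and bipartite; so this $D_2$ \emph{does} have a spanning strong $2$-partite subdigraph. The fix is to attach both the incoming and outgoing inter-copy arcs to the \emph{same} vertex of each copy (as the paper does by always using $v_{i,0}$); then every vertex of $R_k^i$ other than the attachment vertex has total in- and out-degree exactly $k$, so every arc of $R_k^i$ has at least one endpoint with tight degree and must survive, and the chromatic argument goes through.
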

\begin{proof}
  The proof is by induction on the number of vertices of the digraph, the result holding trivially when there are at most $2k+1$ vertices.  So assume $D$ is a digraph on $n\geq 2k+2$ vertices and that the theorem holds for all $k$-arc-connected digraphs on $n-1$ vertices.
  Clearly we can assume that $D$ is  minimally $k$-arc-connected (otherwise we just delete some arcs). By Theorem \ref{thm:minkas}, $D$ contains a vertex $s$ with $d^-(s)=d^+(s)=k$. Thus we can apply Theorem \ref{thm:madersplit} with $s$ as the special vertex. Let $D^*$ be the $k$-arc-connected digraph that we obtain and let $A(s)$ be the set of $k$ arcs that we added to $D\langle V(D)\setminus \{s\}\rangle$ to obtain $D^*$. By induction,
  $D^*$ has a spanning $k$-arc-connected subdigraph $H$ with $\chi(H)\leq 2k+1$. Let
  $(X_1,\ldots{},X_{\chi{}(H)})$ be a partition of $V(H)=V(D)\setminus \{s\}$ into $\chi(H)$ sets such that $H=D^*[X_1,\ldots, X_{\chi{}(H)}]$.
 \FH{ Let $D'$ be obtained from $H\cup A(s)$ (some of the arcs of $A(s)$ may lie inside a set $X_i$) by lifting the $k$ arcs of $A(s)$ to $s$ . By Lemma \ref{lem:kbothways}, $D'$ is $k$-arc-connected.} As $s$ has at most $2k$ neighbours in $UG(D')$, either  there is a set $X_i$ such that $s$ has no neighbours in $X_i$ or $s$ is adjacent to all of the sets $X_1,\ldots{},X_{\chi{}(H)}$. In the former case adding $s$ to $X_i$, we get a $(2k+1)$-colouring of $D'$, and in the former case
  $\chi(H)\leq 2k$ (as the degree of $s$ in $D$ is $2k$)  and $(X_1,\ldots{},X_{\chi{}(H)},\{s\})$ is a $(2k+1)$-colouring of $D'$. 
  Hence $D'$ is a spanning $k$-arc-connected $(2k+1)$-partite subdigraph of $D$.

\medskip

  It remains to prove the second part of the theorem. We shall prove that for every pair of positive integers $r,k$ with $r\geq k+1$ there exists a $k$-arc-connected digraph $D$  on $(2k+1)r$ vertices such that every spanning $k$-arc-connected subdigraph $D'$ of $D$ satisfies $\chi{}(D')\geq 2k+1$.
  
  \FH{Let $R_{2k+1}$ denote the rotative tournament on $2k+1$ vertices $v_0,v_1,\ldots{},v_{2k}$ where the out-neighbours of $v_i$ are the vertices $v_{i+1},v_{i+2},\ldots{},v_{i+k}$ (indices modulo $2k+1$).}
  
 Let $D_1,D_2,\ldots{},D_r$ be copies of  $R_{2k+1}$ and denote the vertices of $D_i$ by $v_{i,0},\ldots{},v_{i,2k}$ such that the out-neighbours of $v_{i,j}$ in $D_i$ are the vertices $v_{i,j+1},\ldots{},v_{i,j+k}$ (indices mod $2k+1$). Add the arcs of the $k$th power of the $r$-cycle $C=v_{1,0}v_{2,0}\ldots{}v_{r,0}v_{1,0}$ (that is there is an arc from $v_{i,0}$ to $v_{i+1,0},\ldots{},v_{i+k,0}$ (modulo $r$) for $i\in [r]$, the resulting digraph is denoted $D_{k,r}$, see Figure \ref{fig:lambda2ex}.

  \begin{figure}[hbtp]
    \begin{center}
      \tikzstyle{vertexB}=[circle,draw, minimum size=20pt, scale=0.6, inner sep=0.5pt]
      \begin{tikzpicture}[scale=0.4]
        \node (v0) at (2,8) [vertexB] {$v_{1,0}$};
        \node (v1) at (2,6) [vertexB] {$v_{1,1}$};
        \node (v2) at (2,4) [vertexB] {$v_{1,2}$};
        \node (v3) at (2,2) [vertexB] {$v_{1,3}$};
        \node (v4) at (2,0) [vertexB] {$v_{1,4}$};
        \draw[->, line width=0.03cm] (v0) to (v1);
        \draw[->, line width=0.03cm] (v1) to (v2);
        \draw[->, line width=0.03cm] (v2) to (v3);
        \draw[->, line width=0.03cm] (v3) to (v4);
        \draw[->, line width=0.03cm] (v4) to [out=145,in=-145] (v0);
        \draw[->, line width=0.03cm] (v0) to [out=-115,in=115](v2);
        \draw[->, line width=0.03cm] (v1) to [out=-115,in=115](v3);
        \draw[->, line width=0.03cm] (v2) to [out=-115,in=115] (v4);
        \draw[->, line width=0.03cm] (v3) to [out= 135,in=-135] (v0);
        \draw[->, line width=0.03cm] (v4) to [out=135,in=-135] (v1);

        \node (v01) at (7,8) [vertexB] {$v_{2,0}$};
        \node (v11) at (7,6) [vertexB] {$v_{2,1}$};
        \node (v21) at (7,4) [vertexB] {$v_{2,2}$};
        \node (v31) at (7,2) [vertexB] {$v_{2,3}$};
        \node (v41) at (7,0) [vertexB] {$v_{2,4}$};
        \draw[->, line width=0.03cm] (v01) to (v11);
        \draw[->, line width=0.03cm] (v11) to (v21);
        \draw[->, line width=0.03cm] (v21) to (v31);
        \draw[->, line width=0.03cm] (v31) to (v41);
        \draw[->, line width=0.03cm] (v41) to [out=145,in=-145] (v01);
        \draw[->, line width=0.03cm] (v01) to [out=-115,in=115](v21);
        \draw[->, line width=0.03cm] (v11) to [out=-115,in=115](v31);
        \draw[->, line width=0.03cm] (v21) to [out=-115,in=115] (v41);
        \draw[->, line width=0.03cm] (v31) to [out= 135,in=-135] (v01);
        \draw[->, line width=0.03cm] (v41) to [out=135,in=-135] (v11);
        
        \node (v02) at (12,8) [vertexB] {$v_{3,0}$};
        \node (v12) at (12,6) [vertexB] {$v_{3,1}$};
        \node (v22) at (12,4) [vertexB] {$v_{3,2}$};
        \node (v32) at (12,2) [vertexB] {$v_{3,3}$};
        \node (v42) at (12,0) [vertexB] {$v_{3,4}$};
        \draw[->, line width=0.03cm] (v02) to (v12);
        \draw[->, line width=0.03cm] (v12) to (v22);
        \draw[->, line width=0.03cm] (v22) to (v32);
        \draw[->, line width=0.03cm] (v32) to (v42);
        \draw[->, line width=0.03cm] (v42) to [out=145,in=-145] (v02);
        \draw[->, line width=0.03cm] (v02) to [out=-115,in=115](v22);
        \draw[->, line width=0.03cm] (v12) to [out=-115,in=115](v32);
        \draw[->, line width=0.03cm] (v22) to [out=-115,in=115] (v42);
        \draw[->, line width=0.03cm] (v32) to [out= 135,in=-135] (v02);
        \draw[->, line width=0.03cm] (v42) to [out=135,in=-135] (v12);
        
        \node (v03) at (17,8) [vertexB] {$v_{4,0}$};
        \node (v13) at (17,6) [vertexB] {$v_{4,1}$};
        \node (v23) at (17,4) [vertexB] {$v_{4,2}$};
        \node (v33) at (17,2) [vertexB] {$v_{4,3}$};
        \node (v43) at (17,0) [vertexB] {$v_{4,4}$};
        \draw[->, line width=0.03cm] (v03) to (v13);
        \draw[->, line width=0.03cm] (v13) to (v23);
        \draw[->, line width=0.03cm] (v23) to (v33);
        \draw[->, line width=0.03cm] (v33) to (v43);
        \draw[->, line width=0.03cm] (v43) to [out=145,in=-145] (v03);
        \draw[->, line width=0.03cm] (v03) to [out=-115,in=115](v23);
        \draw[->, line width=0.03cm] (v13) to [out=-115,in=115](v33);
        \draw[->, line width=0.03cm] (v23) to [out=-115,in=115] (v43);
        \draw[->, line width=0.03cm] (v33) to [out= 135,in=-135] (v03);
        \draw[->, line width=0.03cm] (v43) to [out=135,in=-135] (v13);

        \node (v04) at (22,8) [vertexB] {$v_{5,0}$};
        \node (v14) at (22,6) [vertexB] {$v_{5,1}$};
        \node (v24) at (22,4) [vertexB] {$v_{5,2}$};
        \node (v34) at (22,2) [vertexB] {$v_{5,3}$};
        \node (v44) at (22,0) [vertexB] {$v_{5,4}$};
        \draw[->, line width=0.03cm] (v04) to (v14);
        \draw[->, line width=0.03cm] (v14) to (v24);
        \draw[->, line width=0.03cm] (v24) to (v34);
        \draw[->, line width=0.03cm] (v34) to (v44);
        \draw[->, line width=0.03cm] (v44) to [out=145,in=-145] (v04);
        \draw[->, line width=0.03cm] (v04) to [out=-115,in=115](v24);
        \draw[->, line width=0.03cm] (v14) to [out=-115,in=115](v34);
        \draw[->, line width=0.03cm] (v24) to [out=-115,in=115] (v44);
        \draw[->, line width=0.03cm] (v34) to [out= 135,in=-135] (v04);
        \draw[->, line width=0.03cm] (v44) to [out=135,in=-135] (v14);

        \draw[->,line width=0.03cm] (v0) to (v01);
        \draw[->,line width=0.03cm] (v01) to (v02);
        \draw[->,line width=0.03cm] (v02) to (v03);
        \draw[->,line width=0.03cm] (v03) to (v04);
        \draw[->,line width=0.03cm] (v04) to [out=135, in=45](v0);
        \draw[->,line width=0.03cm] (v04) to [out=150,in=30](v01);
        \draw[->,line width=0.03cm] (v03) to [out=150,in=30] (v0);
        \draw[->,line width=0.03cm] (v0) to [out=20,in=160] (v02);
        \draw[->,line width=0.03cm] (v01) to [out=20,in=160](v03);
        \draw[->,line width=0.03cm] (v02) to [out=20,in=160] (v04);

    \end{tikzpicture}
    \caption{The digraph $D_{2,5}$.}\label{fig:lambda2ex}
    \end{center}
  \end{figure}
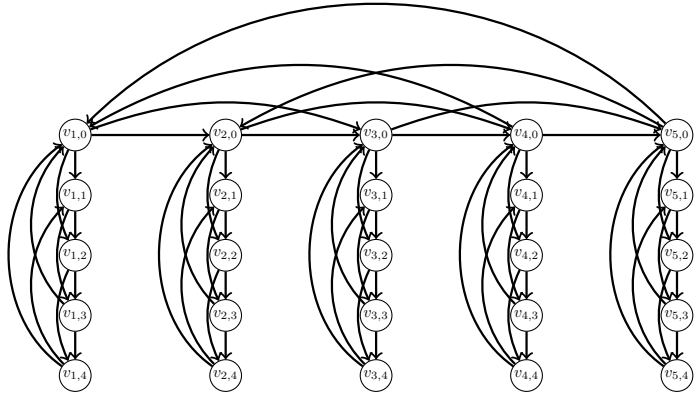

   Let us check that $\lambda{}(D_{k,r})=k$. Clearly $\lambda{}(D_{k,r})\leq k$ since we can disconnect it by removing the $k$ arcs from $D_1$ to $D_2$. 
  As the $k$th power of an $r$-cycle is  $k$-arc-connected  when $r\geq k+1$ (see e.g. \cite[Exercise 5.10]{bang2009}) and $R_{2k+1}$ is the $k$th power of a $(2k+1)$-cycle and hence also $k$-arc strong, it is easy to see that $d^+(S)\geq k$ for every non-trivial subset of $V(D_{k,r})$. Indeed, if both $S$ and $V\setminus S$ contain vertices from $D_i$ for some $i$, then there are at least $k$ arcs from $S\cap V(D_i)$ to $(V\setminus S)\cap V(D_i)$ (because $D_i$ is $k$-arc-connected)  so we may assume that $S$ contains all vertices of some but not all of $D_1,\ldots{},D_r$. Now it follows from the fact that the subdigraph formed by the vertices $v_{1,0}v_{2,0}\ldots{}v_{r,0}$ is $k$-arc-connected that $d^+(S)\geq k$. Hence $\lambda{}(D_{k,r})=k$.\\
  
We claim  that every spanning $k$-arc-connected subdigraph $D'$ of $D_{k,r}$ has $\chi{}(D)\geq 2k'+1$. This  follows by just looking inside $D_1$ and observing that, just to get the in- and out-degrees at least $k$, we need $2k+1$ colours: if we put any two vertices of $V(D_1)$ in the same set, at least one of these vertices  will have too few in- or out-neighbours.
\end{proof}

\section{Partitions maintaining high (out-)degrees}\label{sec:degrees}

In this section, we consider the problem of partitioning the vertex set of a digraph $D=(V,A)$ into $r$ sets $V_1,V_2,\ldots{},V_r$ such that the $r$-partite  digraph $D[V_1,V_2,\ldots{},V_r]$ satisfies certain minimum degree conditions.

The proof in \cite{kreutzerEJC24} of the following proposition is instructive of the techniques used in this section, so we include it for completeness.

\begin{proposition}[Kreutzer et al.~\cite{kreutzerEJC24}] \label{prop4k}
Every digraph has a majority $4$-colouring.
\end{proposition}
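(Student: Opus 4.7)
My plan is to take a 4-colouring $c:V(D)\to\{1,2,3,4\}$ that minimizes the total number of monochromatic arcs
\[
\Phi(c):=|\{uv\in A(D):c(u)=c(v)\}|,
\]
and to argue that such a minimizer is automatically a majority 4-colouring, via a local switching argument in the spirit of Erd\H{o}s' proof of Proposition~\ref{prop:bipSp}. The first step is the standard local optimality observation: moving a single vertex $v$ (say with $c(v)=1$) to a new colour $j\in\{2,3,4\}$ alters $\Phi$ only through arcs incident to $v$, and produces a net change
\[
\Delta_j \;=\; s_j^+(v)+s_j^-(v)-s_1^+(v)-s_1^-(v),
\]
where $s_i^{\pm}(v):=|N^{\pm}(v)\cap c^{-1}(i)|$. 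Minimality of $c$ forces $\Delta_j\ge 0$ for every $j\in\{2,3,4\}$.

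Now suppose for contradiction that some vertex $v$ violates the majority condition, i.e.\ $s_1^+(v)>d^+(v)/2$. I would then select the colour $j^{*}\in\{2,3,4\}$ minimising $s_{j^{*}}^+(v)$. By a pigeonhole argument over the three alternative colours we have $s_{j^{*}}^+(v)\le \lfloor(d^+(v)-s_1^+(v))/3\rfloor<d^+(v)/6$, so the out-arc portion of $\Delta_{j^{*}}$ is strictly less than $-d^+(v)/3$. The goal is to conclude $\Delta_{j^{*}}<0$, contradicting the minimality of $c$.

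The main obstacle is controlling the in-arc portion $s_{j^{*}}^-(v)-s_1^-(v)$: in principle this can be as large as $d^-(v)$ and might cancel the negative out-arc contribution when $d^-(v)\gg d^+(v)$. This is the heart of the proof, since if one only sums the three inequalities $\Delta_j\ge 0$ naively one obtains merely the aggregate bound $s_1^+(v)+s_1^-(v)\le (d^+(v)+d^-(v))/4$, which controls same-class incidence in total but not in the out-direction alone. To bridge this gap I expect the fix to consist either in replacing $\Phi$ by a reweighted potential, in which each monochromatic arc $uv$ contributes a weight depending on $d^+(u)$ so that the minimality inequality localises to each out-neighbourhood, or in pairing the single move at $v$ with a coordinated compensating switch at some in-neighbour of $v$ of colour $j^{*}$. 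A careful double counting over the three alternative colours should then rule out the remaining case and deliver the majority property.
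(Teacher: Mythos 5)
The gap you identify is real and fatal to the approach as stated: minimizing $\Phi$ only gives you the local inequalities $\Delta_j\ge 0$, which control the aggregate $s^+_{c(v)}(v)+s^-_{c(v)}(v)$ but not $s^+_{c(v)}(v)$ alone. A concrete obstruction: take $v$ with two out-neighbours, both coloured $1$, and twelve in-neighbours split $4$--$4$--$4$ among colours $2,3,4$. Then colour $1$ strictly minimizes the monochromatic arcs incident to $v$, yet $v$ has every out-neighbour monochromatic with it. So a locally $\Phi$-minimal colouring need not be a majority colouring, and your proposed patches (reweighting by $1/d^+(u)$ or coordinated double moves) still face the same difficulty, because moving $v$ always disturbs the in-arc contributions regardless of how out-arcs are weighted.

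The paper's proof is entirely different and bypasses optimization altogether. Fix an arbitrary ordering $v_1,\dots,v_n$ of $V(D)$. In a forward pass assign each $v_i$ a first colour $c_1(v_i)\in\{1,2\}$ so that at least half of the out-neighbours of $v_i$ lying in $\{v_1,\dots,v_{i-1}\}$ receive a different $c_1$-value; this is always possible since there are only two choices. In a backward pass over $v_n,\dots,v_1$, assign a second colour $c_2(v_j)\in\{1,2\}$ so that at least half of the out-neighbours of $v_j$ in $\{v_{j+1},\dots,v_n\}$ receive a different $c_2$-value. Set $c(v)=(c_1(v),c_2(v))$, a $4$-colouring. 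Each $v_i$ then disagrees in the first coordinate with at least half of its earlier out-neighbours and in the second coordinate with at least half of its later out-neighbours, hence disagrees with at least half of $N^+(v_i)$ overall. The key idea you were missing is to split every out-neighbourhood into ``earlier'' and ``later'' via a linear order and handle each half with an independent greedy $2$-colouring whose majority condition is enforced one vertex at a time; the product of the two $2$-colourings then delivers the result with no global potential, no switching, and no need to control in-arcs at all.
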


\begin{proof}
  Let $D$ be given and let $v_1,\ldots{},v_n$ be an arbitrary ordering of $V(D)$. For each $i=1,2,\ldots{},n$ in that order, assign to $v_i$ a first colour $c_1(v_i) \in [2]$ such that at least half of the out-neighbours of $v_i$ in $\{v_1,\ldots{},v_{i-1}\}$ have a first colour different from $c_1(v_i)$. Next we consider the reverse ordering $v_n,v_{n-1},\ldots{},v_1$ and, for each $j=n,n-1,\ldots{},1$ in that order, we assign to $v_j$ a second colour $c_2(v_j)\in [2]$ such that at least half of the out-neighbours of $v_j$ in $\{\jbj{v_{j+1}},\ldots{},v_n\}$ have a second colour different from $c_2(v_j)$. 
\FH{  Now let $c$ be the $4$-colouring defined by $c(v)=(c_1(v), c_2(v))$.
  This is clearly a majority colouring because the colour pair of $v_i$ differs from the ones of at least half of its out-neighbours in $\{v_1,\ldots{},v_{i-1}\}$ in the first coordinate, and from the ones of at least half of its out-neighbours in $\{v_{i+1},\ldots{},v_n\}$ in the second coordinate.}
  \end{proof}

Generalizations of the above result can also be found in \cite{antonio2017,knoxEJC25}.

\begin{theorem}\label{thm:2knotk+1}
There exists a \jbj{$2k$-strong} digraph with $\delta^+(D)=2k$ which has no spanning $3$-partite digraph $H$ with $\delta^0(H)\geq k+1$.
\end{theorem}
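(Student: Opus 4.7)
The plan is to construct, for each $k\geq 1$, an explicit $2k$-strong digraph $D_k$ with $\delta^+(D_k)=2k$ and verify that no spanning 3-partite subdigraph $H$ achieves $\delta^0(H)\geq k+1$. Equivalently, one must show that in every 3-colouring $(V_1,V_2,V_3)$ of $V(D_k)$ some vertex has at least $k$ same-colour out-neighbours or at least $k$ same-colour in-neighbours, since then $d_H^+(v)\leq k$ or $d_H^-(v)\leq k$.

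The natural first attempt is a blow-up construction: take a base digraph $R$ (a suitably chosen tournament, e.g.\ a rotational or doubly regular one), replace each vertex $u\in V(R)$ by a set $X_u$ of $t$ twin vertices, and for every arc $uv\in A(R)$ insert the complete bipartite arc-set from $X_u$ to $X_v$. This ensures $\delta^+(D_k)=t\cdot\delta^+(R)$, which is set equal to $2k$, and the standard fact that blowing up a $\kappa$-strong digraph by factor $t$ yields a $\kappa t$-strong digraph gives $2k$-strong connectivity as soon as $\kappa(R)\cdot t\geq 2k$.

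The crux of the proof is the 3-partition step. For each block $X_u$ and colour $c\in\{1,2,3\}$ set $x_{u,c}=|X_u\cap V_c|$. Requiring $d_H^+(v)\geq k+1$ at $v\in X_u$ with $c(v)=c$ translates to
\[
\sum_{u'\in N_R^+(u)}x_{u',c}\leq k-1,
\]
and similarly $\sum_{u'\in N_R^-(u)}x_{u',c}\leq k-1$ for in-neighbours. Together with $\sum_c x_{u,c}=t$ for every $u$ and $\sum_{u,c}x_{u,c}=t\cdot|V(R)|$, this is a finite linear system, and the plan is to show by a counting/averaging argument, summing all constraints over $u$ and $c$, that it admits no non-negative integer solution — yielding the desired vertex at which the $\delta^0\geq k+1$ condition breaks.

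The principal obstacle is identifying a base digraph $R$ for which this system really is infeasible, while simultaneously keeping $\kappa(R)\cdot t\geq 2k$ achievable. Naive candidates fail: for instance the blow-up of $R_5$ by factor $k$ admits explicit feasible occupation patterns once $k\geq 3$ (one can balance the $x_{u,c}$'s across the five blocks so that every constraint is satisfied), so $R$ must be chosen tighter — e.g.\ a rotational tournament $R_{2\ell+1}$ whose out-neighbourhood size grows with $k$, a Paley-type doubly regular tournament on roughly $4k+1$ vertices, or a more ad hoc digraph tailored to $k$. Once the right $R$ has been fixed, the connectivity check for the blow-up and the infeasibility argument for the linear system both reduce to routine bookkeeping, and the construction provides the required witness for Theorem~\ref{thm:2knotk+1}.
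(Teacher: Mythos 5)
Your proposal does not actually prove the theorem: it is a programme, not a construction. The statement is an existence claim, so its entire content is the witness digraph, and you explicitly leave that open ("the principal obstacle is identifying a base digraph $R$\dots"), even noting that the natural candidates (blow-ups of small rotational tournaments) admit feasible colourings. Nothing in the sketch shows that \emph{any} tournament blow-up works. Indeed a crude averaging argument suggests the symmetric route is in trouble: in a blow-up of a $d$-regular tournament with block size $t$ and $td=2k$, the constraints you write down only force, for the most used colour $c$, that $S_c-x_{u,c}\le 2k-2$ where $S_c=\sum_u x_{u,c}$, and since $\sum_c S_c=t(2d+1)=2k(2d+1)/d$ this is comfortably satisfiable for all $k\ge 3$; balanced colourings of blown-up regular tournaments tend to meet majority-type constraints on both sides. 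So the infeasibility you hope for cannot come from summing the constraints, and no tighter mechanism is proposed. A smaller point: your "equivalent" reformulation (some vertex has at least $k$ same-colour out- or in-neighbours) is only equivalent for vertices whose relevant degree is exactly $2k$; for higher-degree vertices one needs $d^+_D(v)-k$ same-colour out-neighbours, so the reduction must be stated per vertex.

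The paper takes a completely different, asymmetric route: a layered digraph with an independent set $V_4$ of size $3k-2$, a set $R_W$ of $2k-1$ vertices for each $k$-subset $W\subseteq V_4$ (each vertex of $R_W$ having out-degree exactly $2k$, with $k$ arcs into $W$), and for each $k$-subset $U\subseteq R_W$ further sets $S_{W,U}$ (size $k$) and $T_{W,U}$ (size $2k$) with prescribed arcs, plus arcs making the whole digraph $2k$-strong. The proof of the colouring part is a forcing chain: pigeonhole on $|V_4|=3k-2$ gives a monochromatic $k$-set $W$ in some part $P_1$; then $R_W$ must avoid $P_1$, so some $k$-subset $U$ is monochromatic, say in $P_2$; this forces $S_{W,U}\subseteq P_3$, which forces $T_{W,U}\subseteq P_1$, and then each vertex of $T_{W,U}$ has $k$ of its $2k$ in-neighbours (namely $W$) in its own class, a contradiction. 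The crucial design feature is vertices whose out- (or in-) neighbourhood of size exactly $2k$ is split $k$/$k$ between two already-pinned colour classes, which rigidly propagates the colouring; this kind of forcing is exactly what a vertex-transitive blow-up lacks. To salvage your approach you would have to exhibit a concrete base digraph and prove infeasibility of the resulting system, and at present there is no evidence such a base exists.
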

\begin{proof}
Let ${\cal D}$ be the class of  4-partite digraphs with partite sets $V_1,V_2,V_3,V_4$ that one can construct as follows: start with an independent set $V_4$ with $|V_4|=3k-2$.
For every subset $W \subseteq V_4$ of size $k$, we let $R_W$ be a  set of $2k-1$ vertices. Let $V_3$ be the union of all such $R_W$.  Hence $|V_3|=(2k-1)\times{3k-2 \choose k}$.  Now add all possible arcs from $R_W$ to $W$ as well as \jbj{extra} arcs from $V_3$ to $V_4$ such that all vertices in $V_3$ have out-degree exactly $2k$.
   Next, for each $W \subseteq V_4$ of size $k$ and for each $U \subseteq R_W$ also of size $k$, let $S_{W,U}$ and  $T_{W,U}$ be two sets of respectively $k$  and $2k$ new vertices, and add all possible arcs from  $S_{W,U}$ to $W \cup U$, all possible arcs from $T_{W,U}$ to $S_{W,U} \cup U$, and all possible arcs from $W$ to $T_{W,U}$.  Let $V_2$ be the union of all such $S_{W,U}$ and let $V_1$ be the union of all such $T_{W,U}$. Furthermore  add extra arcs from $V_4$ to $V_1$ such that each vertex in $V_1$ has in-degree exactly $2k$. Finally add all possible arcs from $V_4$ to $V_2 \cup V_3$. See Figure \ref{fig:2k}.

\begin{figure}[H]
\begin{center}
\tikzstyle{vertexSC}=[circle,draw, minimum size=25pt, scale=1, inner sep=0.1pt]
\tikzstyle{vertexLC}=[circle,draw, minimum size=40pt, scale=1, inner sep=0.1pt]
\begin{tikzpicture}[scale=0.9]
\draw (0,2) rectangle (2,6);
\node (w) at (1,4.5) [vertexSC] {$W$};
\node () at (0.5,3) {$V_4$};
\draw (4,2) rectangle (6,6);
\node (rw) at (5,4.5) [vertexLC] {};
\node (u) at (5,4.5) [vertexSC]{$U$};
\node () at (5,5.5) {$R_W$};
\node () at (5.5,3) {$V_3$};
\draw (8,2) rectangle (10,6);
\node (swu) at (9,4.5) [vertexSC] {$S_{W,U}$};
\node () at (9.5,3) {$V_2$};
\draw (3,-2) rectangle (7,0);
\node (twu) at (5,-1) [vertexLC] {$T_{W,U}$};
\node () at (6.5,-1.5) {$V_1$};
\draw [->,line width=0.03cm, double] (rw) to (w);
\draw [->,line width=0.03cm] (4.5,3) to (1.5,3);
\draw [->,line width=0.03cm, double] (swu) to (u);
\draw [->,line width=0.03cm, double] (swu) [out=120,in=60] to (w);
\draw [->,line width=0.03cm, double] (twu) to (u);
\draw [->,line width=0.03cm, double] (twu) to (swu);
\draw [->,line width=0.03cm, double] (w) to (twu);
\draw [->,line width=0.03cm] (1,2.5) to (3.5,-1);
\draw [->,line width=0.03cm, double] (2,4) to (4,4);
\draw [->,line width=0.03cm, double] (2,2) to [out=-30,in=210] (8,2);
\end{tikzpicture}
\end{center}
\caption{An illustration of a 4-partite digraph from the class $\cal D$.}\label{fig:2k}
\end{figure}
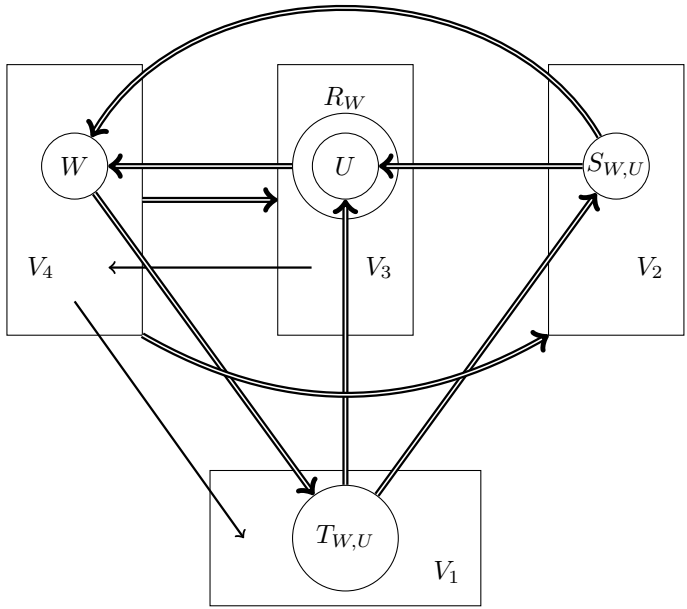

  Note that $|V_2|=k\times{2k-1 \choose k}\times{3k-2 \choose k}$ and $|V_1|=2k\times {2k-1 \choose k}\times {3k-2 \choose k}$.
  Also note that the out-degree of any vertex in $V_1 \cup V_2 \cup V_3$ is exactly $2k$. The out-degree of each vertex in $V_4$ is huge.
The in-degree of each vertex in $V_1$ is $2k$, the in-degree of each vertex in $V_2$ is $5k-2$  and the in-degrees of vertices in $V_3$ and $V_4$ are big.\\

\begin{claim} Every digraph $D\in {\cal D}$ is \jbj{$2k$-strong}.
\end{claim}
\begin{subproof}
 For the sake of contradiction, assume that \jbj{some $D\in \cal D$ has a separating set $S$   of size less than $2k$}.
    Let $D'=D-S$ and let $V_i' = V_i \setminus S$ for $i\in [4]$. Let $Z= V_2' \cup V_3'$ and let $z \in Z$ be arbitrary. As $z$ has $2k$ arcs into $V_4$ in $D$ we note that $z$ has an arc, say $zv$, into $V_4'$. Note that $v$ dominates all vertices in $Z$, by our construction of $D$. This implies that there is a path from $z$ to every vertex in $Z$ in $D'$. Therefore (as $z$ was chosen arbitrarily) all vertices in $Z$ belong to the same strong component of $D'$.     
    Let $v_4 \in V_4'$ be arbitrary. As $v_4$ has (a lot) more than $2k$ arcs entering it from $V_2 \cup V_3$ in $D$ it has at least one arc into it from $Z$. As it also has an arc into every vertex of $Z$ we note that it belongs to the same strong component as $Z$ in $D'$.  As $v_4$ was chosen arbitrarily we note that $Z \cup V_4'$ ($= V_2' \cup V_3' \cup V_4'$) belongs to the same strong component in $D'$. As $V_1$ is an independent set with $d^+(r)=d^-(r)=2k$ for every $r \in V_1$ we note that every vertex in $V_1'$ has an arc into $Z \cup V_4'$ and an arc from $Z \cup V_4'$ in $D'$.  Therefore there is only one strong component in $D'$ and $S$ was not a seperating set. This contradiction implies the claim. \end{subproof}

  We will now show that every $3$-partition of the vertices in $D$ contains a vertex $x$ such that $x$ has at most $k$ out-neighbours in different partite sets than itself or has  at most $k$ in-neighbours in different partite sets than itself.  \FH{Assume for a contradiction that there is a $3$-partition $(P_1, P_2, P_3)$ such that that \jbj{every vertex has} at least $k+1$ out-neighbours and at least $k+1$ in-neighbours in different partite sets than the vertex itself.
  As $|V_4|=3k-2$, one of the $P_i$, say $P_1$, contains a set $W$ of $k$ vertices of $V_4$.  } This implies that $R_W \subseteq P_2 \cup P_3$ (otherwise \jbj{some} vertex $x \in P_1 \cap R_W$ does not have $k+1$ out-neighbours in $P_2 \cup P_3$, a contradiction). Thus, as $|R_W|=2k-1$, one of its subsets $U$ of size $k$ is contained in one of the two partite sets $P_2$ and $P_3$. W.l.o.g. $U \subseteq P_2$.
  This implies that $S_{W,U} \subseteq P_3$ as every vertex in $S_{W,U}$ has out-degree exactly $2k$ and it has $k$ arcs into $P_1$ and $k$ arcs into $P_2$. 
  Let $t \in T_{W,U}$ be arbitrary. Note that  the out-degree of $t$ \jbj{in $D$} is $2k$ and $t$ has $k$ arcs into $P_2$ (in $R_W$) and $k$ arcs into $P_3$ (in $S_{W,U}$). Therefore $t \in P_1$. However this implies that $t$ does not have $k$ in-neighbours in $P_2 \cup P_3$, as it has in-degree $2k$ and $k$ of these arcs come from $W$ (and $W \subseteq P_1$). 
  This contradiction completes the proof.
  \end{proof}

If we want to keep the same minimum out-degree in a low-chromatic spanning subdigraph \jbj{$D'$ of a digraph $D$}, we can give the \jbj{best possible} bound on the chromatic number of such a digraph $D'$. \jbj{If  a digraph $D$ is not strongly connected, then a {\bf terminal strong component} of $D$ is a strong component $S$ such that $D$ has no arc $uv$ with $u\in V(S)$ and $v\in V\setminus V(S)$.}

\begin{proposition}[Bang-Jensen et al.~\cite{bangTCS719}]
    Every digraph $D$ with $\delta^+(D)\geq k$ has a spanning $(2k+1)$-partite subdigraph $D'$ with $\delta^+(D')\geq k$.
  \end{proposition}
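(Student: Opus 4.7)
The strategy is to trim $D$ to an arc-minimal witness of the hypothesis and then reduce the problem to a proper colouring of the resulting underlying graph. Greedily delete arcs of $D$ as long as $\delta^+\geq k$ is preserved, and let $D^{*}$ be the resulting spanning subdigraph. Then $d^{+}_{D^{*}}(v)=k$ for every $v$, for otherwise some out-arc of $v$ could still be removed, contradicting minimality. Observe that if $(V_{1},\ldots,V_{2k+1})$ is a partition of $V(D)$ such that no arc of $D^{*}$ has both endpoints in the same part, then every vertex retains all $k$ of its out-neighbours from $D^{*}$ in $D[V_{1},\ldots,V_{2k+1}]$, whence $\delta^{+}(D[V_{1},\ldots,V_{2k+1}])\geq k$. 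Thus it suffices to produce a proper $(2k+1)$-colouring of the underlying graph $UG(D^{*})$.

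The key step is to show that $UG(D^{*})$ is $2k$-degenerate. For any non-empty $S\subseteq V(D)$, each vertex of the induced subdigraph $D^{*}\langle S\rangle$ has out-degree at most $k$ there, so $|A(D^{*}\langle S\rangle)|\leq k|S|$, which forces $|E(UG(D^{*})[S])|\leq k|S|$. Hence $UG(D^{*})[S]$ has average degree at most $2k$ and in particular contains a vertex of degree at most $2k$. This establishes $2k$-degeneracy of $UG(D^{*})$, and the usual greedy argument then produces a proper $(2k+1)$-colouring, which is the desired partition.

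There is no substantial obstacle: the one point requiring care is to ensure that $D^{*}$ really is $k$-out-regular, so that the counting argument gives the tight bound $|A(D^{*}\langle S\rangle)|\leq k|S|$ on every induced subdigraph and hence the $2k$-degeneracy of $UG(D^{*})$. The bound $2k+1$ is sharp by the rotative tournaments $R_{2k+1}$ used in the proof of Theorem~\ref{thm:arc-con-bip}: since $UG(R_{2k+1})$ is a complete graph on $2k+1$ vertices, any partition of its vertex set into at most $2k$ parts puts two mutually adjacent vertices together, dropping the out-degree of one of them below $k$.
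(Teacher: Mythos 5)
Your proof is correct and self-contained. Since the paper only cites this proposition from Bang-Jensen et al.\ without reproducing a proof, there is no in-paper argument to compare against; but the route you take — prune to an arc-minimal $k$-out-regular spanning subdigraph $D^{*}$, observe that $UG(D^{*})$ is $2k$-degenerate by the counting $|E(UG(D^{*})[S])|\leq |A(D^{*}\langle S\rangle)|\leq k|S|$, and then greedily $(2k+1)$-colour — is exactly the natural elementary argument and, as far as one can tell, the same one used in the cited source. The one point worth stating explicitly for $k\geq 1$ is that $k$-out-regularity of $D^{*}$ follows because deleting any arc $uv$ would drop $d^{+}(u)$ below $k$, so every vertex has out-degree exactly $k$; for $k=0$ the proposition is vacuous. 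Your sharpness remark via $R_{2k+1}$ matches the example the paper uses elsewhere for the related Theorem~\ref{thm:arc-con-bip}.
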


  \begin{theorem}[Bang-Jensen et al.~\cite{bangTCS719}]
    \label{thm:2kcolpol}
    A digraph $D$ with $\delta^+(D)\geq k$ has a spanning $2k$-partite  subdigraph $D'$ with $\delta^+(D')\geq k$ if and only if no terminal strong component of $D$ is a $k$-regular tournament. In particular, there is a polynomial-time algorithm for deciding whether a given digraph has spanning  $2k$-partite subdigraph $D'$ with $\delta^+(D')\geq k$. 
    \end{theorem}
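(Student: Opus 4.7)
Suppose a terminal strong component $C$ of $D$ is a $k$-regular tournament. Then $|V(C)|=2k+1$ and each $v\in V(C)$ has $d^+_D(v)=d^+_C(v)=k$ with all out-neighbours in $V(C)$. In any $2k$-colouring of $V(D)$, pigeonhole on $V(C)$ yields distinct $u,w\in V(C)$ of the same colour; since $C$ is a tournament one of $uw,wu$ is an arc, say $uw$, and then $u$ retains at most $k-1$ out-neighbours of a different colour, ruling out any valid spanning $2k$-partite $D'$.

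\noindent\textbf{If.} I would enumerate the strong components of $D$ in reverse topological order $C_1,\dots,C_r$ and extend a valid $2k$-colouring $\varphi$ one component at a time, so that when $C_i$ is processed all out-neighbours of $V(C_i)$ outside $C_i$ have already been coloured. The key technical ingredient is the following lemma: \emph{every strong digraph $C$ with $\delta^+(C)\geq k$ not isomorphic to a $k$-regular tournament admits a $2k$-colouring in which every vertex has at least $k$ out-neighbours of a different colour.} For a terminal $C_i$ the lemma is exactly what is needed. For a non-terminal $C_i$, let $a(v)$ denote the number of out-neighbours of $v$ in $\bigcup_{j<i}V(C_j)$ and $b(v)$ the number in $V(C_i)$; then $a(v)+b(v)\geq k$. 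A single pigeonhole swap (some colour is used by at most $\lfloor a(v)/2k\rfloor\leq\max(0,a(v)-k)$ of $v$'s already-coloured out-neighbours) handles the vertices with $a(v)\geq k$ independently of $C_i$, and the remaining vertices (for which $b(v)>0$) are then coloured by applying the lemma to $C_i$, now with a bonus of already-secured good out-arcs that only relaxes the constraint.

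\noindent\textbf{Proof of the lemma.} I would case split on $n=|V(C)|$. If $n\leq 2k$, assigning pairwise distinct colours makes every arc good. If $n\geq 2k+2$, run a local swap starting from any $2k$-colouring: while some vertex $v$ has fewer than $k$ good out-neighbours, recolour $v$ to a colour used by at most $\lfloor d^+_C(v)/2k\rfloor\leq d^+_C(v)-k$ of its out-neighbours (valid since $d^+_C(v)\geq k$ and there are $2k$ colours available), using a carefully designed lexicographic potential to force termination. If $n=2k+1$, an arc count with $\delta^+(C)\geq k$ forces $C$ to be a $k$-regular tournament unless either (i) $C$ has an unordered non-adjacent pair $\{x,y\}$, in which case giving $\{x,y\}$ a common colour and the other $2k-1$ vertices pairwise distinct colours is valid directly (neither of $x,y$ is an out-neighbour of the other), or (ii) some vertex has out-degree at least $k+1$, which provides the extra slack to pair that vertex with a carefully chosen out-neighbour (for instance one outside its in-neighbourhood) and colour the rest distinctly.

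\noindent\textbf{Polynomial algorithm and main obstacle.} Strong components, the condensation DAG, terminal strong components, the test for a $k$-regular tournament, the pigeonhole swap and the local-search step are all polynomial-time, so the whole construction is polynomial. The main obstacle I foresee is the borderline case $n=2k+1$ with a vertex of out-degree at least $k+1$ but without a non-adjacent pair, where one must argue that a suitable partner always exists; this is precisely where the hypothesis is used essentially, and structural results such as Moon's theorem (Theorem~\ref{thm:moon}) may well be needed to guarantee the partner.
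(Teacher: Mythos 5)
The paper does not include a proof of this theorem -- it is cited verbatim from Bang-Jensen et al.~\cite{bangTCS719} -- so there is no ``paper's proof'' to compare your attempt against.

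Your ``only if'' argument is fine. But there is a fatal problem with the ``if'' direction, and it points to an error in the statement itself as it appears in the present paper (you should consult the original source~\cite{bangTCS719} for the exact hypotheses). Take $k=1$ and $D=\vec C_5$, the directed $5$-cycle $v_1\to v_2\to\cdots\to v_5\to v_1$. Here $\delta^+(D)=1$, and the unique strong component ($D$ itself) is certainly not a $1$-regular tournament (that would be $\vec C_3$), so the stated condition is satisfied. Yet every vertex has out-degree exactly $1$, so any spanning subdigraph with $\delta^+\geq 1$ must contain all five arcs, i.e.\ must equal $\vec C_5$, whose underlying graph is an odd cycle and hence not $2$-colourable. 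Thus $\vec C_5$ has no spanning $2$-partite subdigraph with $\delta^+\geq 1$. This is precisely a counterexample to your key lemma (``every strong digraph $C$ with $\delta^+(C)\geq k$ not isomorphic to a $k$-regular tournament admits a $2k$-colouring in which every vertex has at least $k$ out-neighbours of a different colour''): $\vec C_5$ is strong, has $\delta^+\geq 1$, is not a $1$-regular tournament, falls into your ``$n\geq 2k+2$'' branch, and yet admits no such colouring. Concretely, your proposed local-search step loops forever on $\vec C_5$: there is always a monochromatic arc, recolouring its tail creates a new monochromatic arc with that tail's in-neighbour, and no lexicographic potential can save a procedure searching for an object that does not exist.

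Independently of this, two other steps are under-specified and would need real work even on a corrected statement: (i) your reduction from a non-terminal component $C_i$ to the lemma is not legitimate as written, since $D\langle V(C_i)\rangle$ need not satisfy $\delta^+\geq k$ nor be strong once you restrict to the vertices with $a(v)<k$, and the ``bonus of already-secured good out-arcs'' changes the combinatorics in a way the lemma's hypotheses do not capture; and (ii) the $n=2k+1$ case (ii) -- a vertex of out-degree $\geq k+1$ all of whose out-neighbours are also in-neighbours -- is genuinely delicate, as you note, and is not handled.
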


For the proofs below we need some lemmas.

\begin{lemma}[Alon~\cite{alonCPC15}]
\label{alonI}
 Let $A=(a_{ij})$ be an $n \times n$ matrix where $a_{ii}=0$ for all $i$ and $a_{ij} \geq 0$ for all $i \not= j$ and
$\sum_{j=1}^n a_{ij} \leq 1$ for all $i \in [n]$. Let $t>0$ be an integer and let $c_1,c_2,\ldots,c_t$ be positive real numbers
that sum up to $1$. Then there exists a partition $(S_1,S_2,\ldots,S_t)$ of $[n]$ such that for 
every $r$ and every $i \in S_r$ we have $\sum_{j \in S_r} a_{ij} \leq 2 c_r$.
\end{lemma}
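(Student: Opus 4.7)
The problem is a discrepancy-style statement, and the two natural approaches are (i)~an extremal/potential-function argument and (ii)~the probabilistic method. My first attempt would use the potential
\[
\Phi(S_1,\ldots,S_t) \;=\; \sum_{r=1}^{t}\frac{1}{c_r}\sum_{\substack{i,j\in S_r \\ i\neq j}} a_{ij},
\]
choosing $(S_1,\ldots,S_t)$ to be a partition of $[n]$ (with empty parts allowed) that minimises $\Phi$. Writing $f_s(i)=\sum_{j\in S_s}a_{ij}$ for the out-weight and $g_s(i)=\sum_{j\in S_s}a_{ji}$ for the in-weight of $i$ in $S_s$, the effect of moving $i\in S_r$ to $S_{r'}$ is $\Delta_{r'}\Phi = \frac{f_{r'}(i)+g_{r'}(i)}{c_{r'}}-\frac{f_r(i)+g_r(i)}{c_r}$. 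By minimality, $\Delta_{r'}\Phi\geq 0$ for every $r'$; multiplying by $c_{r'}$ and summing over $r'$ gives
\[
\frac{f_r(i)+g_r(i)}{c_r}\;\leq\;\sum_{j}(a_{ij}+a_{ji}).
\]
If the matrix $A$ also satisfied column-sum bounds $\sum_j a_{ji}\leq 1$, the right-hand side would be at most $2$, and this would immediately yield the desired $f_r(i)\leq 2c_r$.

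The main obstacle is precisely that only row sums are controlled by the hypothesis, so the symmetric potential above is not quite strong enough to conclude the asymmetric bound. My backup plan is the probabilistic method: assign each $i$ independently to class $r$ with probability $c_r$. Conditional on $r(i)=r$, the expectation of $\sum_{j\in S_r}a_{ij}$ equals $c_r\sum_{j\neq i}a_{ij}\leq c_r$, so by Markov's inequality each vertex violates its bound with probability at most $1/2$. To upgrade this to a deterministic partition in which every vertex is good, I would follow with a local-modification step: while some $i\in S_r$ is bad, move it to the class $r^*$ minimising $f_{r^*}(i)/c_{r^*}$, which by averaging is at most $\sum_{j}a_{ij}\leq 1<2$, so $i$ becomes good immediately after the move.

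The delicate point—and the main technical difficulty—is termination: moving $i$ out of $S_r$ can raise $f_{r^*}(k)$ by $a_{ki}$ for each $k\in S_{r^*}$, so it might create new bad vertices. The plan to control this cascade is to couple the move rule with an appropriate monovariant (a weighted variant of $\Phi$, or perhaps the lexicographic vector of ratios $\bigl(f_{r(i)}(i)/c_{r(i)}\bigr)_i$ sorted decreasingly) and to verify that each reassignment strictly decreases it, so that the process must halt at a partition satisfying the conclusion.
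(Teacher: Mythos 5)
The paper does not prove this lemma; it is stated as a citation to Alon's paper \emph{Splitting digraphs}, so there is no in-paper proof to measure your attempt against, and I can only assess the proposal on its own merits.

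Your first attempt, minimizing the symmetric potential $\Phi$, is correctly diagnosed as insufficient. The computation is sound: at a local optimum one obtains $\tfrac{f_r(i)+g_r(i)}{c_r} \le \sum_j (a_{ij}+a_{ji})$, and since the hypothesis bounds only the row sums $\sum_j a_{ij}$ and not the column sums $\sum_j a_{ji}$, this does not deliver $f_r(i)\le 2c_r$. Your rejection of this route is justified; in the paper's applications (Lemma~\ref{lem:3part} and Theorem~\ref{thm:KsemiDegrees}) the column sums of the matrices $A$ are genuinely unbounded, so one really cannot assume them controlled.

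The backup plan, however, has a genuine gap at exactly the point you flag: termination. Neither of the candidate monovariants you propose is shown to work, and neither in fact does. Moving a bad vertex $i$ to the class $r^*$ minimizing $f_{r^*}(i)/c_{r^*}$ can strictly \emph{increase} $\Phi$, because the term $g_{r^*}(i)/c_{r^*}$ appearing in $\Delta\Phi$ is uncontrolled. It can likewise increase the maximum ratio $\max_k f_{r(k)}(k)/c_{r(k)}$: every $k$ already in $S_{r^*}$ has its ratio bumped up by $a_{ki}/c_{r^*}$, and with $a_{ki}$ up to $1$ and $c_{r^*}$ possibly small this bump can exceed the ratio that $i$ had before the move, so the sorted vector of ratios need not decrease lexicographically. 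Until a concrete decreasing quantity is exhibited (and shown to decrease by a bounded-below amount, or to range over a finite set, so that strict decrease gives termination), the argument is not a proof. Finding such an invariant is precisely the substance of Alon's lemma, and it is the step that remains open in your proposal.
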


\begin{lemma}
\label{lem:3part}
  Let $D$ be a digraph, and let $(X,Y)$ be an arbitrary partition of $V(D)$ into two disjoint subsets (one of which may be empty).
  $D$ has a spanning $3$-partite subdigraph $H$
  such that every vertex $x \in X$ satisfies $d^+_{H}(x)\geq \left\lceil\frac{d^+_D(x)}{3} \right\rceil$ and
  every vertex $y \in Y$ satisfies $d^-_{H}(y)\geq \left\lceil\frac{d^-_D(y)}{3} \right\rceil$.
\end{lemma}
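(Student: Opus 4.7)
The plan is to derive the lemma from Alon's matrix lemma (Lemma \ref{alonI}) by encoding the out-degree requirement for $X$ and the in-degree requirement for $Y$ into a single nonnegative matrix with all row-sums at most $1$. Define an $n\times n$ matrix $A=(a_{ij})$ indexed by $V(D)$ as follows. For a vertex $i\in X$ with $d^+_D(i)\geq 1$, set $a_{ij}=1/d^+_D(i)$ whenever $ij\in A(D)$ and $a_{ij}=0$ otherwise. For a vertex $i\in Y$ with $d^-_D(i)\geq 1$, set $a_{ij}=1/d^-_D(i)$ whenever $ji\in A(D)$ and $a_{ij}=0$ otherwise. For any vertex whose relevant degree is $0$, take the row to be identically zero. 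Since $D$ has no loops, $a_{ii}=0$ for all $i$, and clearly $\sum_j a_{ij}\leq 1$ for each $i$; moreover, because $(X,Y)$ is a partition of $V(D)$, every row is defined in exactly one of the two ways.

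Apply Lemma \ref{alonI} with $t=3$ and $c_1=c_2=c_3=1/3$ to obtain a partition $(V_1,V_2,V_3)$ of $V(D)$ such that, for every $r\in[3]$ and every $i\in V_r$, $\sum_{j\in V_r}a_{ij}\leq 2/3$. Let $H=D[V_1,V_2,V_3]$.

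For $i\in X\cap V_r$ with $d^+_D(i)\geq 1$, this inequality says that the number of out-neighbours of $i$ lying inside $V_r$ is at most $2d^+_D(i)/3$, and being an integer, at most $\lfloor 2d^+_D(i)/3\rfloor$. Therefore the number of out-neighbours of $i$ outside $V_r$, which is exactly $d^+_H(i)$, is at least $d^+_D(i)-\lfloor 2d^+_D(i)/3\rfloor=\lceil d^+_D(i)/3\rceil$. The case $d^+_D(i)=0$ is trivial. The symmetric argument, applied to $i\in Y\cap V_r$ using the row entries $1/d^-_D(i)$, yields $d^-_H(i)\geq \lceil d^-_D(i)/3\rceil$. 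This gives the desired spanning $3$-partite subdigraph $H$.

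The only point requiring care is to verify that the two different normalizations (by $d^+_D$ on $X$-rows and by $d^-_D$ on $Y$-rows) simultaneously satisfy the row-sum and diagonal-zero hypotheses of Lemma \ref{alonI}; this is immediate because $X$ and $Y$ partition $V(D)$, so the choice of normalization is unambiguous row by row. I do not anticipate any real obstacle beyond this bookkeeping, as the result is essentially a direct specialization of Alon's lemma.
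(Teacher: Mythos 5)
Your proposal is correct and is essentially the paper's own proof: the same normalized matrix (rows of $X$ scaled by $1/d^+_D$, rows of $Y$ by $1/d^-_D$), the same application of Lemma~\ref{alonI} with $t=3$ and $c_1=c_2=c_3=\tfrac13$, and the same ceiling computation. The only difference is that you make the zero-degree rows and the integrality step explicit, which the paper leaves implicit.
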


\begin{proof}
The proof follows similar lines as in \cite{antonio2017}.
Let $D$ be a digraph, let $(X,Y)$ be an arbitrary partition of $V(D)=\{v_1,v_2,\ldots,v_n\}$ and let $A=(a_{ij})$ be the $n \times n$ matrix defined as follows.
For all $i \in [n]$ let $a_{ii}=0$. 
For each $v_i \in X$, let $a_{ij} = \frac{1}{d^+_D(v_i)}$ if $v_iv_j \in A(D)$, and let $a_{ij}=0$ if $v_iv_j \not\in A(D)$.
For each $v_i \in Y$ let $a_{ij} = \frac{1}{d^-_D(v_i)}$ if $v_jv_i \in A(D)$ and let $a_{ij}=0$ if $v_jv_i \not\in A(D)$.
We now apply Lemma~\ref{alonI} to $A$ with $t=3$ and $c_1=c_2=c_3=\frac{1}{3}$ in order to obtain
a partition $(S_1,S_2,S_3)$ of $[n]$ as in the lemma. Let $T_j = \{ v_i \; | \; i \in S_j\}$ for $j\in[3]$ and let $H=D[T_1,T_2,T_3]$. 
Note that the following holds for every $r \in [3]$ and every $v_i \in T_r$, by Lemma~\ref{alonI}.

\[
\frac{2}{3} = 2 c_r \geq \sum_{j \in S_r} a_{ij} = \sum_{v_j \in T_r} a_{ij} 
\]

So, if $v_i \in X$ then $\frac{2}{3} \geq \frac{d_{T_r}^+(v_i)}{d^+_D(v_i)}$, which implies that at most two thirds of $v_i$'s 
out-neighbours lie in $T_r$.  Therefore  $d^+_{H}(v_i)\geq \left\lceil\frac{d^+_D(v_i)}{3} \right\rceil$, as desired.
If $v_i \in Y$ then $\frac{2}{3} \geq \frac{d_{T_r}^-(v_i)}{d^-_D(v_i)}$, which implies that at most two thirds of $v_i$'s
in-neighbours lie in $T_r$.  Therefore  $d^-_{H}(v_i)\geq \left\lceil\frac{d^-_D(v_i)}{3} \right\rceil$, as desired.
\end{proof}

We now consider the case where we want to maintain a high minimum semi-degree by partitioning the vertex set into few parts.

Using a similar proof as that of Proposition~\ref{prop4k}, but considering both out- and in-neighbours (and using two 3-colourings, instead of two 2-colourings), one can easily prove that every digraph $D$ has a spanning $9$-partite subdigraph $H$ with $\delta^0(H)\geq\delta^0(D)/2$.
We can in fact improve this result as follows.

\begin{theorem}
\label{thm:KsemiDegrees}
  Let $k \geq 5$ be an integer. 
  Every digraph $D$ has a spanning $k$-partite subdigraph $H$ with $d^0_H(x)
  \geq \frac{k-4}{k} d^0_D(x)$ for all $x \in V(D)$.
\end{theorem}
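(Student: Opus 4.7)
The plan is to extend the weight-matrix approach of Lemma~\ref{lem:3part} so that the matrix simultaneously controls out-degrees and in-degrees, and then apply Alon's Lemma~\ref{alonI} with $k$ parts of equal weight. The single new idea is to split each vertex's row-sum budget of $1$ evenly between its out-arcs and its in-arcs.

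First I would label $V(D)=\{v_1,\ldots,v_n\}$ and define an $n\times n$ matrix $A=(a_{ij})$ as follows. Set $a_{ii}=0$, and for $i\neq j$ put $a_{ij}=\alpha_{ij}+\beta_{ij}$, where $\alpha_{ij}=\frac{1}{2d^+_D(v_i)}$ if $v_iv_j\in A(D)$ and $\alpha_{ij}=0$ otherwise, while $\beta_{ij}=\frac{1}{2d^-_D(v_i)}$ if $v_jv_i\in A(D)$ and $\beta_{ij}=0$ otherwise (with the convention that $\alpha_{ij}=0$ whenever $d^+_D(v_i)=0$, and $\beta_{ij}=0$ whenever $d^-_D(v_i)=0$). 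By construction $\sum_j \alpha_{ij}\leq\tfrac12$ and $\sum_j \beta_{ij}\leq\tfrac12$, so every row sum of $A$ is at most $1$, and Lemma~\ref{alonI} applies.

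I would then invoke Lemma~\ref{alonI} with $t=k$ and $c_1=\cdots=c_k=\tfrac{1}{k}$ to obtain a partition $(S_1,\ldots,S_k)$ of $[n]$ satisfying $\sum_{j\in S_r}a_{ij}\leq\tfrac{2}{k}$ for every $r$ and every $i\in S_r$. Let $T_r=\{v_i:i\in S_r\}$ and $H=D[T_1,\ldots,T_k]$. Fix any $v_i\in T_r$. Since $\alpha_{ij},\beta_{ij}\geq 0$, both partial sums $\sum_{v_j\in T_r}\alpha_{ij}$ and $\sum_{v_j\in T_r}\beta_{ij}$ are bounded by $\tfrac{2}{k}$. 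When $d^+_D(v_i)>0$, the $\alpha$-bound rewrites as $|N^+_D(v_i)\cap T_r|\leq\tfrac{4}{k}\,d^+_D(v_i)$, hence
\[
d^+_H(v_i)=d^+_D(v_i)-|N^+_D(v_i)\cap T_r|\;\geq\; \frac{k-4}{k}\,d^+_D(v_i)\;\geq\;\frac{k-4}{k}\,d^0_D(v_i);
\]
symmetrically, when $d^-_D(v_i)>0$ the $\beta$-bound yields $d^-_H(v_i)\geq\tfrac{k-4}{k}\,d^0_D(v_i)$. If $d^+_D(v_i)$ or $d^-_D(v_i)$ equals $0$ then $d^0_D(v_i)=0$ and the conclusion is vacuous. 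Combining the two inequalities gives $d^0_H(v_i)\geq\tfrac{k-4}{k}\,d^0_D(v_i)$, as required.

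There is no serious obstacle here beyond noticing that the unit row-sum budget can be split $\tfrac12+\tfrac12$ between the two degree conditions; Alon's lemma then does all the work. The constant $4$ in the statement (and therefore the hypothesis $k\geq 5$) arises directly from the factor of $2$ in the conclusion of Lemma~\ref{alonI} combined with the $\tfrac12$-splitting, and appears intrinsic to this approach.
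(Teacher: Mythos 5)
Your proposal is correct and uses essentially the same approach as the paper: the same matrix (your $\alpha_{ij}+\beta_{ij}$ decomposition is literally the paper's four-case definition of $a_{ij}$), the same application of Alon's Lemma~\ref{alonI} with $t=k$ and equal weights $c_r=1/k$, and the same non-negativity argument to extract the separate out- and in-degree bounds. The only difference is notational bookkeeping.
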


\begin{proof}
Let $V(D)=\{v_1,v_2,\ldots,v_n\}$ and let $A=(a_{ij})$ be the $n \times n$ matrix defined as follows.

\[
a_{ij} = \left\{
\begin{array}{ccl} \vspace{0.15cm}
0 & \hspace{0.5cm} & \mbox{if $v_i v_j \not\in A(D)$ and $v_j v_i \not\in A(D)$} \\ \vspace{0.15cm}
\frac{1}{2d^+_D(v_i)} & & \mbox{if $v_i v_j \in A(D)$ and $v_j v_i \not\in A(D)$} \\ \vspace{0.15cm}
\frac{1}{2d^-_D(v_i)} & & \mbox{if $v_i v_j \not\in A(D)$ and $v_j v_i \in A(D)$} \\
\frac{1}{2d^+_D(v_i)} + \frac{1}{2d^-_D(v_i)} & & \mbox{if $v_i v_j \in A(D)$ and $v_j v_i \in A(D)$} \\
\end{array}
\right.
\]

Note that, as there are no loops in $D$, $a_{ii}=0$ for all $i \in [n]$.
Also $\sum_{j=1}^n a_{ij} \leq 1$ for all $i \in [n]$ since we sum up the term $\frac{1}{2d^+_D(v_i)}$ exactly $d^+_D(v_i)$ times and
we sum up the  term $\frac{1}{2d^-_D(v_i)}$ exactly $d^-_D(v_i)$ times (if $d^+_D(v_i)=0$ (resp. $d^-_D(v_i)=0$), then the first (resp. second) sum equals $0$.).
We now apply Lemma~\ref{alonI} to $A$ with $t=k$ and $c_1=c_2= \cdots = c_k=\frac{1}{k}$ in order to obtain
a partition $(S_1,S_2, \ldots, S_k)$ of $[n]$. Let $T_j = \{ v_i \; | \; i \in S_j\}$ for $j\in [k]$, and let $H=D[T_1, \ldots, T_k]$.
Note that the following holds for every $r \in [k]$ and every $v_i \in T_r$, by Lemma~\ref{alonI}.

\[
\frac{2}{k} = 2 c_r \geq \sum_{j \in S_r} a_{ij} = \sum_{v_j \in T_r} a_{ij}
\]

In the last sum, we note that the term $\frac{1}{2d^+_D(v_i)}$ is summed up exactly $d_{T_r}^+(v_i)$ times and the 
term $\frac{1}{2d^-_D(v_i)}$ is summed up exactly $d_{T_r}^-(v_i)$ times. Therefore the following holds.

\[
\frac{2}{k}  \geq \sum_{v_j \in T_r} a_{ij} = \frac{d_{T_r}^+(v_i)}{2d^+_D(v_i)} + \frac{d_{T_r}^-(v_i)}{2d^-_D(v_i)}
\]

Multiplying both sides by $2$, we obtain $\frac{4}{k} \geq \frac{d_{T_r}^+(v_i)}{d^+_D(v_i)} + \frac{d_{T_r}^-(v_i)}{d^-_D(v_i)}$
and as all terms are non-negative, we have $\frac{d_{T_r}^+(v_i)}{d^+_D(v_i)} \leq \frac{4}{k}$ and 
$\frac{d_{T_r}^-(v_i)}{d^-_D(v_i)} \leq \frac{4}{k}$. 
As $H=D[T_1,\ldots,T_k]$, we note that
$d^+_{H}(v_i)\geq (1 - 4/k) d^+_D(v_i)$ and 
$d^-_{H}(v_i)\geq (1 - 4/k) d^-_D(v_i)$, which completes the proof. 
\end{proof}

\jbj{\begin{corollary}Every digraph $D$ with $\delta^0(D)\geq 3r$ has a spanning 6-partite subdigraph $H$ with\\ $\delta^0(H)\geq r$.
    \end{corollary}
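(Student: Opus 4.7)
The plan is simply to apply Theorem~\ref{thm:KsemiDegrees} with the specific choice $k=6$. Since $6 \geq 5$, the theorem is applicable and guarantees a spanning $6$-partite subdigraph $H$ of $D$ satisfying
\[
d^0_H(x) \geq \frac{6-4}{6}\, d^0_D(x) = \frac{1}{3}\, d^0_D(x)
\]
for every vertex $x \in V(D)$.

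Now I invoke the hypothesis $\delta^0(D)\geq 3r$, which ensures $d^0_D(x) \geq 3r$ for every $x$. Substituting this into the bound above yields $d^0_H(x) \geq \frac{1}{3}\cdot 3r = r$ for every $x$. Taking the minimum over all vertices gives $\delta^0(H)\geq r$, as required.

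There is no real obstacle here: the corollary is a direct specialization of Theorem~\ref{thm:KsemiDegrees} with $k=6$, chosen precisely so that the multiplicative factor $\frac{k-4}{k}$ becomes $\frac{1}{3}$, matching the ratio $r/(3r)$ dictated by the hypothesis. The only modest design choice is that $k=6$ is the smallest value of $k$ in the admissible range $k\geq 5$ for which $\frac{k-4}{k}$ is a unit fraction $\frac{1}{m}$ with integer $m$, making the conclusion cleanly expressible as $\delta^0(H)\geq r$ from $\delta^0(D)\geq 3r$.
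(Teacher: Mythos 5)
Your proof is correct and is exactly the intended derivation: the paper states this corollary immediately after Theorem~\ref{thm:KsemiDegrees} with no separate argument, relying on precisely the specialization $k=6$ that you give. Nothing further is needed.
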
}

\section{Final remarks and open questions}\label{sec:open}

\subsection{Spanning $k$T-subgraphs with low chromatic number}

As noted in the introduction, we do not know whether the edge-connectivity condition $\lambda(G)\geq 5$ in Corollary~\ref{cor:5ec2T3part}~(a) is best possible
\begin{question}
Does every $4$-\jbj{edge}-connected graph have a spanning  $3$-partite 2T-subgraph ?
\end{question}

Just as a 2T-graph is the union of two edge-disjoint spanning trees a {\bf $\mathbf{k}$T-graph} is a graph whose edge set decomposes into $k$ edge-disjoint spanning trees. 
Observe that $k$T-graphs are $(2k-1)$-degenerate and thus $2k$-partite. Moreover, there 
 there are infinitely many $k$T-graphs with chromatic number $2k$:
 For $k=1$ this is trivial and for $k=2$ the family of odd wheels $W_{2n+1}$, $n\geq 3$ forms an infinite class of 4-chromatic 2T-graphs. A well-known result due to Walecki states that the complete graph $K_{2r}$ decomposes into $r$ hamiltonian paths (see e.g. \cite{alspachBICA52}). Hence if we take an arbitrary $k$T-graph $H$ and join it by $k$ edges to a copy of $K_{2k}$, then we obtain a
$k$T-graph with chromatic number $2k$.\\

It would be interesting to study the existence of spanning $k$T-subgraphs with \jbj{chromatic number smaller than $2k$} in graphs with \jbj{sufficiently}   high edge-connectivity.
Corollary \ref{cor:5ec2T3part} can be generalized as follows.
\begin{proposition} Let $G$ be a graph and $k$ a positive integer.
If $\lambda(G) \geq 2k+1$, then $G$ contains a spanning $(k+1)$-partite $k$T-graph.
\end{proposition}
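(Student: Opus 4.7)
The plan is to combine Theorem~\ref{thm:maxcut} with the Nash-Williams--Tutte theorem (Theorem~\ref{thm:2k}), exactly in the spirit of the derivation of Corollary~\ref{cor:5ec2T3part} but now using the partition parameter $k+1$ instead of $2$ or $3$.

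First, I would apply Theorem~\ref{thm:maxcut} to $G$ with partition parameter $k+1$ to obtain a spanning $(k+1)$-partite subgraph $H$ of $G$ satisfying
\[
\lambda(H) \geq \left\lceil \frac{k}{k+1}\, \lambda(G) \right\rceil \geq \left\lceil \frac{k(2k+1)}{k+1} \right\rceil.
\]
The next step is the routine arithmetic check
\[
\frac{k(2k+1)}{k+1} = \frac{2k^2+k}{k+1} = 2k - \frac{k}{k+1},
\]
so that $\left\lceil \frac{k(2k+1)}{k+1} \right\rceil = 2k$, because $0 < \frac{k}{k+1} < 1$. Consequently $\lambda(H) \geq 2k$.

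Now I would apply the Nash-Williams--Tutte theorem (Theorem~\ref{thm:2k}) to $H$ to extract $k$ edge-disjoint spanning trees $T_1,\ldots,T_k$ of $H$. Let $H' = T_1 \cup \cdots \cup T_k$. Then $H'$ is a spanning $k$T-subgraph of $H$, hence a spanning $k$T-subgraph of $G$. Since $H'$ is a subgraph of the $(k+1)$-partite graph $H$, it inherits the same $(k+1)$-partition of its vertex set, so $H'$ is a spanning $(k+1)$-partite $k$T-subgraph of $G$, as required.

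There is no real obstacle here; the only thing to verify carefully is the ceiling computation showing that the edge-connectivity condition $\lambda(G)\geq 2k+1$ is exactly enough to push the edge-connectivity of the $(k+1)$-partite subgraph up to $2k$, which is the threshold needed to apply Theorem~\ref{thm:2k}. Note also that the proof is constructive via the polynomial-time algorithm following Theorem~\ref{thm:maxcut} together with the well-known polynomial-time algorithms for packing edge-disjoint spanning trees.
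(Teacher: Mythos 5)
Your proof is correct and is essentially the paper's own argument: apply Theorem~\ref{thm:maxcut} with parameter $k+1$ to get a spanning $(k+1)$-partite subgraph $H$ with $\lambda(H)\geq \lceil k(2k+1)/(k+1)\rceil = 2k$, then apply Theorem~\ref{thm:2k} to extract $k$ edge-disjoint spanning trees inside $H$. The ceiling computation you spell out is exactly the (implicit) arithmetic in the paper's proof, so there is nothing to add.
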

\begin{proof}
Let $G$ be $(2k+1)$-edge-connected. By Theorem \ref{thm:maxcut}, $G$ has a spanning $2k$-edge-connected graph $H$ with $\chi{}(H)\leq k+1$. By Theorem \ref{thm:2k}, $H$ has a spanning $k$T-graph.
\end{proof}

A natural question is whether edge-connectivity condition $\lambda(G)\geq 2k+1$ of this proposition is best possible.

\subsection{Highly arc-connected subdigraphs of low chromatic number}

The bound $2k+1$ of Theorem~\ref{thm:arc-con-bip} is best possible for digraphs in general.
However, for semicomplete digraphs of sufficiently large order and $k=1$, it is not as \jbj{we} established in Theorem~\ref{thm:SDstrongbip}.

\begin{question}
  Does every 2-arc-connected semicomplete digraph with at least 6 vertices have a spanning 2-arc-connected subdigraph $D'$ with $\chi{}(D')\leq 3$? \jbj{or} with $\chi{}(D')\leq 4$?
  
  More generally, does every $k$-arc-connected semicomplete digraph of sufficiently large order  have a spanning $k$-arc-connected $r$-partite subdigraph for $3\leq r \leq 2k$  ?
  \end{question}

 As established in Theorem~\ref{thm:nobipstrong},  there is no degree of strong connectivity which guarantees that a digraph contains a spanning strong bipartite subdigraph. But does there exist one that guarantees the existence of an $r$-partite subdigraph with high arc-connectivity \jbj{when $r\geq 3$.}
 \begin{question}
   \label{quest:g(k,r)}
  Does there exist a function $g=g(k,r)$  when $r\geq 3$ such that every  $g(k,r)$-arc-connected digraph has a spanning $r$-partite  subdigraph which is $k$-arc-connected?
\end{question}

By Proposition \ref{prop:3col-sub}, $g(1,3)=1$. 

\medskip

  An {\bf out-branching} is a connected digraph $B^+_s$ in which every vertex, except one (called the {\bf root}) vertex $s$  has exactly one arc entering. This is equivalent to saying that $s$ can reach every other vertex by a directed path in $B^+_s$. 

  The following classical result, due to Edmonds, 
  and Menger's theorem implies that every $k$-arc-connected digraph has $k$-arc-disjoint out-branchings rooted at $s$ for every vertex $s$.
\begin{theorem}[Edmonds~\cite{edmonds1973}]
  \label{thm:Edbrthm}
  Let $D=(V,A)$ be a digraph and let $s\in V$. Then $D$ contains  $k$ arc-disjoint out-branchings, all rooted at $s$, if and only if there are $k$ arc-disjoint $(s,v)$-paths in $D$ for every $v\in V$.
\end{theorem}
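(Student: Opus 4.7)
\medskip

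\textbf{Proof plan.} The ``only if'' direction is immediate: each of $k$ arc-disjoint out-branchings rooted at $s$ contains a unique $(s,v)$-path for every $v\in V$, and these $k$ paths are arc-disjoint because the branchings are.

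For the ``if'' direction I would proceed by induction on $k$, the case $k=0$ being vacuous. By Menger's theorem, the hypothesis $\lambda(s,v)\geq k$ for every $v\in V$ is equivalent to the cut condition
\[
d^-_D(X)\geq k \quad\text{for every }X\text{ with }\emptyset\neq X\subseteq V\setminus\{s\}.
\]
It suffices to extract a single out-branching $B^+_s$ in $D$ such that $D-A(B^+_s)$ still satisfies this condition with $k-1$ in place of $k$; the inductive hypothesis then furnishes $k-1$ further arc-disjoint out-branchings rooted at $s$.

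Since any out-branching uses at least one arc entering every non-empty $X\subseteq V\setminus\{s\}$, the desired property translates to: $B^+_s$ uses \emph{exactly one} arc entering each \emph{tight} set $X$, i.e. each $X$ with $d^-_D(X)=k$. The key structural tool is the submodular inequality
\[
d^-_D(X)+d^-_D(Y)\ \geq\ d^-_D(X\cap Y)+d^-_D(X\cup Y),
\]
which forces the family of tight sets to be closed under union and intersection whenever their intersection is non-empty (and, when both inequalities are tight, also forbids arcs from $X\setminus Y$ to $Y\setminus X$).

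I would build $B^+_s$ incrementally, maintaining a set $S\subseteq V$ (initially $\{s\}$) spanned by the current partial out-branching, together with the invariant that the partial branching contains at most one arc entering each tight subset of $V\setminus\{s\}$. The main step, and the main obstacle, is the \emph{extension lemma}: whenever $S\neq V$ there exists an arc $uv\in A(D)$ with $u\in S$, $v\notin S$, whose addition preserves the invariant. Its proof uses the closure properties above: the maximal tight sets disjoint from $S$ form a subpartition of $V\setminus(S\cup\{s\})$, and a counting argument on the $\geq k$ arcs entering each such maximal tight set, combined with the $\geq k$ arcs leaving $S$ guaranteed by the cut condition applied to $V\setminus S$, produces an extending arc that avoids creating a second incoming arc for any tight set. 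Once $S=V$, the resulting $B^+_s$ is the required out-branching, and the induction concludes.
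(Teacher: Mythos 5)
Your ``only if'' direction is fine, and your overall plan for the ``if'' direction (induction on $k$; extract one out-branching whose removal preserves the cut condition with $k-1$; prove an extension lemma by submodular uncrossing) is the classical Lov\'asz-style argument -- the paper itself only states the theorem with a citation to Edmonds and gives no proof. However, there is a genuine gap in your reduction. You claim that the desired property of $B^+_s$ ``translates to: $B^+_s$ uses exactly one arc entering each tight set $X$'', where tight means $d^-_D(X)=k$. This condition is necessary but not sufficient: what is needed is that $B^+_s$ enters every non-empty $X\subseteq V\setminus\{s\}$ at most $d^-_D(X)-(k-1)$ times, and the violated sets can be ones that are \emph{not} tight in $D$. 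Concretely, take $k=2$, $V=\{s,x,y,z\}$ and arcs $sx,sy,sz,zx,xy,yz$. Every non-empty $X\subseteq\{x,y,z\}$ has $d^-_D(X)\geq 2$, and $D$ does have two arc-disjoint spanning out-branchings rooted at $s$ (e.g. $\{sx,xy,yz\}$ and $\{sy,sz,zx\}$). The only tight sets are $\{x\},\{y\},\{z\}$, and the star $B=\{sx,sy,sz\}$ enters each of them exactly once -- so it satisfies your invariant at every step of its construction -- yet $D-A(B)$ has no arc entering $\{x,y,z\}$, so no second out-branching remains. Hence maintaining ``at most one arc of the partial branching into each $D$-tight set'' does not deliver the inductive step.

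The repair is to make tightness dynamic, i.e. relative to the residual digraph: grow an arborescence $F$ rooted at $s$ with vertex set $S$ while maintaining $d^-_{D-A(F)}(X)\geq k-1$ for every non-empty $X\subseteq V\setminus\{s\}$ (call $X$ \emph{critical} when equality holds); this is exactly the hypothesis needed for the induction once $S=V$. Your uncrossing machinery then works with critical sets in place of $D$-tight sets: critical sets are closed under union and intersection whenever they meet, and choosing an inclusion-wise \emph{minimal} critical set $X$ with $X\setminus S\neq\emptyset$ (if none exists, any arc from $S$ to $V\setminus S$ extends $F$, and such an arc exists since $d^-_{D-A(F)}(V\setminus S)=d^-_D(V\setminus S)\geq k$), the inequality $d^-_{D-A(F)}(X\setminus S)=d^-_D(X\setminus S)\geq k>k-1=d^-_{D-A(F)}(X)$ yields an arc $uv$ with $u\in X\cap S$ and $v\in X\setminus S$; if some critical $Y$ with $v\in Y$, $u\notin Y$ blocked it, then $X\cap Y$ would be a critical set meeting $V\setminus S$ and properly contained in $X$, contradicting minimality. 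With this corrected invariant and choice of $X$, your sketch becomes the standard proof.
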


The following question concerning arc-disjoint out-branchings is certainly an important partial question in connection with Question  \ref{quest:g(k,r)}.

\begin{question}
  Does there exist a function $h=h(k,r)$ such that every  $h(k,r)$-arc-connected digraph has a spanning $r$-partite subdigraph with $k$ arc-disjoint out-branchings?
\end{question}

Note that we have $h(1,2)=1$ as every strong digraph has an out-branching.

Again the result of Theorem \ref{thm:nobipstrong}  makes even the following question relevant.
We saw in Theorem \ref{thm:7ecbip2T} that for undirected graphs and edge-disjoint spanning trees the corresponding edge-connectivity requirement is 7.

\begin{question}
Does $h(2,2)$ exist? 
\end{question}

By Theorem \ref{thm:7ecbip2T}, there exist infinitely many 6-edge-connected graphs with no spanning bipartite 2T-subgraph. 
\FH{A theorem of Nash-Williams~\cite{nashwilliamsCJM12} states that a graph has a $k$-arc-connected orientation if and only if it is $2k$-edge-connected.}
Thus, there exist infinitely many 3-arc-connected oriented graphs for which no  spanning bipartite subdigraph has a pair of arc-disjoint branchings. So $h(2,2)\geq4$ if it exists.

\subsection{Partitions maintaining high (out-)degrees}

Theorem~\ref{thm:KsemiDegrees} only holds for $k\geq 5$, so a natural question is whether it holds for smaller value
of $k$. Thomassen's examples show that it does not hold for $k=2$. But what for $k=3$ or $k=4$?
  \begin{problem} \label{ProbXi}
For $k\in \{3,4\}$ does there exists $c_k>0$, such that every digraph $D$ contains a 
spanning $k$-partite subdigraph $H$ such that every vertex $v$ satisfies $\delta_{H}^0(v)\geq c_k \delta_{D}^0(v)$ ?
\end{problem}

The answer to Problem~\ref{ProbXi} might be negative, yet a huge semi-degree in $D$ could still guarantee the existence of a spanning $3$- or $4$-partite subdigraph with large semi-degree.

 \begin{problem}
 For $k\in \{3,4\}$, does there exist $f_k(p)$ such that every digraph $D$ with $\delta^0(D)\geq f_k(p)$ has a spanning $k$-partite subdigraph $D'$ with $\delta^0(D')\geq p$?
\end{problem}

By Theorem \ref{thm:2knotk+1},  we must have $f_4(p)\geq f_3(p)\geq 2p-1$.

\medskip

Observe that $\frac{k-4}{k}$ tends to  $1$ as $k$ tends to $+\infty$. So for every $\epsilon$, there exists $k_{\epsilon}$ such that
Every digraph $D$ contains a spanning $k_{\epsilon}$-partite subdigraph $H$ such that every vertex $v$ satisfies  
$\delta_{H}^0(v)\geq (1-\epsilon) \delta_{D}^0(v)$. It is natural to ask for a minimum such $k_\epsilon$.

  \begin{problem} \label{ProbXii}
What is the smallest $k_{\epsilon}$ such that every digraph $D$ contains a 
spanning $k_{\epsilon}$-partite subdigraph $H$ such that every vertex $v$ satisfies $\delta_{H}^0(v)\geq (1-\epsilon) \delta_{D}^0(v)$ ?
\end{problem}

By Theorem \ref{thm:2knotk+1}, we have $k_{\epsilon} \leq \frac{4}{\epsilon}$. 
Note that Problem~\ref{ProbXii} is equivalent to determining (for all $k$) the largest $c_k$ such that every digraph $D$ has a spanning $k$-partite subdigraph $H$ with $d^0_H(x)
  \geq c_{k} d^0_D(x)$ for all $x \in V(D)$.


\end{document}